\newtheorem{defin}{Definition}
\newtheorem{theorem}{Theorem}
\newtheorem{prop}{Proposition}
\newtheorem{lemma}{Lemma}
\newtheorem{corollary}{Corollary}
\newtheorem{remark}{Remark}
\newtheorem{assumption}{Assumption}
\newcommand{\be}{\begin{equation}}
\newcommand{\ee}{\end{equation}}
\newcommand{\ba}{\begin{array}}
\newcommand{\ea}{\end{array}}
\newcommand{\bea}{\begin{eqnarray}}
\newcommand{\eea}{\end{eqnarray}}
\newcommand{\tran}{^{\mbox{\scriptsize T}}}  
\newcommand{\vbar}{\raisebox{.17ex}{\rule{.04em}{1.35ex}}}
\newcommand{\vbarind}{\raisebox{.01ex}{\rule{.04em}{1.1ex}}}
\newcommand{\D}{\ifmmode {\rm I}\hspace{-.2em}{\rm D} \else ${\rm I}\hspace{-.2em}{\rm D}$ \fi}
\newcommand{\T}{\ifmmode {\rm I}\hspace{-.2em}{\rm T} \else ${\rm I}\hspace{-.2em}{\rm T}$ \fi}
\newcommand{\B}{\ifmmode {\rm I}\hspace{-.2em}{\rm B} \else \mbox{${\rm I}\hspace{-.2em}{\rm B}$} \fi}
\newcommand{\Hil}{\ifmmode {\rm I}\hspace{-.2em}{\rm H} \else \mbox{${\rm I}\hspace{-.2em}{\rm H}$} \fi}
\newcommand{\C}{\ifmmode \hspace{.2em}\vbar\hspace{-.31em}{\rm C} \else \mbox{$\hspace{.2em}\vbar\hspace{-.31em}{\rm C}$} \fi}
\newcommand{\Cind}{\ifmmode \hspace{.2em}\vbarind\hspace{-.25em}{\rm C} \else \mbox{$\hspace{.2em}\vbarind\hspace{-.25em}{\rm C}$} \fi}
\newcommand{\Q}{\ifmmode \hspace{.2em}\vbar\hspace{-.31em}{\rm Q} \else \mbox{$\hspace{.2em}\vbar\hspace{-.31em}{\rm Q}$} \fi}
\newcommand{\Z}{\ifmmode {\rm Z}\hspace{-.28em}{\rm Z} \else ${\rm Z}\hspace{-.38em}{\rm Z}$ \fi}
\renewcommand{\vec}[1]{{\bf{#1}}}     
\def\etal{\textit{et. al.}}
\newcommand{\R}{\mathbb{R}}
\newcommand{\N}{\mathbb{N}}
\newcommand{\bec}[1]{\bar{\vec{#1}}}
\newcommand{\BLa}{\boldsymbol\Lambda}
\begin{document}

\title{On Maintaining Linear Convergence of Distributed Learning  and Optimization under Limited Communication}

\author{
  Sindri Magn\'usson, 
   Hossein Shokri-Ghadikolaei, and
   Na Li
   
\thanks{*This  work  was  supported  in  part by the Wallenberg AI, Autonomous Systems and Software Program (WASP) funded by the Knut and Alice Wallenberg Foundation, the Swedish Foundation for Strategic Research (SSF), the Swedish Research Foundation under grant 2018-00820, NSF CAREER: ECCS-1553407, AFOSR YIP: FA9550-18-1-0150, and ONR YIP: N00014-19-1-2217.}
\thanks{Sindri Magn\'usson is with the Department of Computer and Systems Science, Stockholm University and Sweden and KTH Royal Institute of Technology, Stockholm, Sweden (e-mail: sindri.magnusson@dsv.su.se). 

Hossein Shokri-Ghadikolaei is with EPFL, Laussane , Switzerland, and KTH Royal Institute of Technology, Stockholm, Sweden.  (e-mail: hshokri@kth.se)

Na Li is with the Harvard John A. Paulson School of Engineering and Applied Science  (e-mail:  nali@seas.harvard.edu)

}

%
}

%






\maketitle

\begin{abstract}

In distributed optimization and machine learning, multiple nodes coordinate to solve large problems. To do this, the nodes need to compress important algorithm information to bits so that it can be communicated over a digital channel. The communication time of these algorithms follows a complex interplay between a) the algorithm's convergence properties, b) the compression scheme, and c) the transmission rate offered by the digital channel. We explore these relationships for a general class of linearly convergent distributed algorithms.   In particular,  we illustrate how to design quantizers for these algorithms that compress the communicated information to a few bits while still preserving the linear convergence. Moreover, we characterize  the communication time of these algorithms as a function of the available  transmission rate. We illustrate our results on learning algorithms using different communication structures, such as decentralized algorithms where a single master coordinates information from many workers and fully distributed algorithms where only neighbours in a communication graph can communicate. We conclude that a co-design of machine learning and communication protocols are mandatory to flourish machine learning over networks.

\end{abstract}

\section{Introduction}

Large-scale distributed computing systems are the cornerstone of recent advancements in many disciplines such as machine learning (ML), communication networks, and networked control. For example, efficient parallel processing has enabled analysis and optimization over big datasets. Similarly, spatially separated wireless networks with cheap and low-complexity sensor nodes (e.g., Internet of things, smart grids, and intra-body wireless sensor networks) are revolutionizing our infrastructures and societies. 
In these systems, multiple processors coordinate to solve large and often spatially separated computational problems. The computational burden of the individual processors is usually manageable, if not small, and based on elementary operations. Instead, the main complexity often lies in the coordination and communication among the processors. This problem is exacerbated in wireless networks where bandwidth-limited and faulty wireless links may further complicate the coordination and consequently become a bottleneck of the distributed computing system. One may implement a set of communication techniques (e.g., quantization, coding and modulation, and scheduling) to reliably exchange information bits among processors at the expense of a slower communication rate.

In an attempt to quantify this coordination challenge in distributed computing,~\cite{yao1979some} introduced the notion of communication complexity in 1970s. 
It measures the minimal number of transmitted bits required between multiple processors to compute a binary function whose inputs are distributed among them (in the minimax sense). 
More recent works have investigated communication complexity and communication-efficient algorithms in various systems, such as in networked control~\cite{hespanha2007survey}, distributed  optimization~\cite{tsitsiklis1987communication,rabbat2005quantized,nedic2008distributed,magnusson2017convergence,magnusson2018communication,WuError2018,YeCommunication2018,alistarh2018convergence,khirirat2018distributed},  and equilibrium seeking in games~\cite{conitzer2004communication,hart2010long,nekouei2016performance}.
 Communication-efficiency has also gained a massive recent interest in the ML community, where  parallel and distributed algorithms are becoming increasingly important in dealing with the huge data size~\cite{zhang2013information,alistarh2017qsgd,ZhuDistributed2018,jordan2018communication,balcan2012distributed}. 
 In fact, when training many of the state-of-the art deep neural networks the communication time is starting to outweigh the computing time~\cite{alistarh2017qsgd}. 
 %
 %
%
A main approach to address the communication complexity is to reduce the number of bits to represent the exchanged information vectors. 
Recent works have analyzed the possibility of updating based on the quantized gradients, both in deterministic~\cite{jordan2018communication,nekouei2016performance,magnusson2019maintaining} and stochastic~\cite{bernstein2018signsgd,kamilov2018signprox,wen2017terngrad,seide20141} settings.

The conventional wisdom, validated by empirical observations, is that there exists a precision-accuracy tradeoff: the fewer the number of bits the lower the accuracy of the final solution.
Some recent studies, however, challenged that wisdom in the deterministic~\cite{lee2018finite,pu2016quantization,magnusson2019maintaining} and stochastic~\cite{de2018high,Stich2018Sparsified} settings. 
 In particular, these papers illustrate how the convergence rate can be preserved under limited data-rates by using adaptive quantization scheme that shrinks as the algorithm converges.
 This idea is, to the best of our knowledge, first presented in~\cite{tsitsiklis1987communication}, which provides almost tight upper and lower bounds on the number of bits two nodes need to communicate to approximately solve a strongly convex optimization problem.
 All of the above adaptive  quantization schemes are, however, designed  for particular algorithms.
It is one of the goals of this paper to illustrate how such adaptive quantization scheme can be designed for general distributed algorithms that converges linearly in any norm.
%
%
%

In the second part of this paper, we highlight that the design of the existing distributed optimization algorithms are often ignorant to some important objectives including end-to-end latency, required to solve an optimization problem. 
These factors become of paramount importance when we implement ML and distributed optimization algorithms over a network to support low-latency services, such as industrial automation and online gaming. We consider a set of mappings between some design parameters (the packet size, the reliability of the communication channel, and multiple access protocol) and the achievable transmission rate. We then characterize the aggregated latency required for the convergence of the distributed algorithm. We show a surprising result that reducing the number of bits per iteration (even if we can maintain the linear convergence rate) may not necessarily lead to the lowest latency for solving an optimization problem. In particular, in a contention-based resource allocation such as slotted-ALOHA~\cite{bertsekas1992data}, a high-dimensional ML problem may enjoy sending the fewest number of bits per iteration (to control the channel contention at an optimal level) even though it may not lead to the optimal convergence rate in terms of the number of iterations, whereas a high rate point-to-point network may enjoy a much higher value for the optimal number of bits.

The main contributions of this paper are as follows.
\begin{itemize}
\item \emph{Iteration complexity analysis:} We considers general parallel/distributed algorithms that have a linear convergence rate in an arbitrary norm. For these algorithms, we develop an adaptive quantization that maintains the non-asymptotic and asymptotic linear convergence rate while communicating only few bits per iteration. 
\item \emph{Novel performance measure:} In the second part of the paper, we introduce the novel notion of transmission time complexity of running a distributed optimization over a generic communication network. We consider abstract transmission rate functions that take as inputs the size of the payload (number of quantization bits) and overhead of each packet and the probability of package failure and quantify. 
\item \emph{Transmission time complexity analysis:} We build on our iteration-complexity results and characterize  the transmission time convergence of distributed algorithms for different communication channels, such as  AWGN channels and  multiple access channels. Our algorithm class/framework is generic enough to cover many ML algorithms of practical interest and with   different communication protocols.
\item \emph{Example use cases:} We illustrate the usage of our algorithm to maintain convergence of distributed ML and optimization algorithms where i) a master node coordinates information received from many worker nodes and ii) where the nodes communicate over a communication graph with no central coordinator.
\item \emph{Extensive experiments} We show the implications of our theoretical convergence results on large dimensional data sets, including the MNIST.
\end{itemize}
This paper addresses the problem of communication complexity from the point of view of information and communication theory, which is still in its infancy.
 A conference version containing part of this work  was presented in~\cite{magnusson2019maintaining}.
 All of the work in Section~\ref{sec:Comparison},~\ref{Sec:TTC}, and~\ref{section:InterPlay} 
  is appearing here for the first time.
 Moreover, the discussions and results in Section~\ref{sec:AMCLC} and Section~\ref{Sec:Maintain} have been largely improved. 
 Moreover, the conference version did not include any proofs and almost all of the numerical results are new.

\textbf{Notations.} Normal font small letters $x$, bold font small letters $\vec{x}$, bold font capital letters $\vec{X}$, and calligraphic font $\mathcal{X}$ stand for scalars, vectors, matrices, and sets respectively. For a matrix $\vec{X}$, $\vec{X}_{ij}$ denotes its $(i,j)$-th entry. 
$\vec{A}\otimes \vec{B}$ is the Kronecker product $\vec{A}$ and $\vec{B}$ and $\texttt{Im}(\vec{A})$ is the span of $\vec{A}$. 
We denote by $\lambda_{\min}^+(\vec{A})$ and  $\lambda_{\max}(\vec{A})$, respectively, the smallest non-zero eigenvalue of $\vec{A}$ and the largest eigenvalue of matrix $\vec{A}$. We denote by $|\mathcal{L}|$  the cardinality of the set $\mathcal{L}$.  

\section{Algorithm Model: \\Communication and Linear Convergence} \label{sec:AMCLC}
In this section, we start by introducing the abstract form of our distributed algorithms. We then provide two examples of algorithms in this form that are popular in distributed learning over networks.

\subsection{Distributed Iterative Algorithms} \label{Sec:CIA}

Consider a network of $N$ nodes that  cooperatively solve a distributed computational problem involving some communication. In particular, we consider the following general algorithm framework
\begin{subequations} \label{eq:ItAlg1}
\begin{align}
     \vec{x}^{k+1}=& A(\vec{c}^k,    \vec{x}^k), \label{subeq:ItAlg1a}\\
    \vec{c}_i^{k+1}=& C_i(\vec{x}^{k+1}), ~~\text{ for } i=1,\ldots,N, \label{subeq:ItAlg1b}
 \end{align}
 \end{subequations}
where $\vec{c}^k=(\vec{c}_1^k,\ldots,\vec{c}_N^k)$.
The function   $A:\R^{Nd}\times \mathcal{X}\rightarrow \mathcal{X}$ represents an algorithm update of the  decision variable  $\vec{x}\in \mathcal{X}$, where $\mathcal{X}$ is a subset of a finite dimensional Euclidean space.
 The function $C_i:\mathcal{X}\rightarrow \R^d$ picks out the relevant information $\vec{c}_i=C_i(\vec{x})$ that node $i$ needs to communicate to run the algorithm.\footnote{To simplify the presentation we have assumed that $\vec{c}_i$ has the same dimension for all $i$. However, all the results in this paper also hold for $\vec{c}_i^k\in\R^{d_i}$ by replacing $d$ with $\max_{i} d_i$ followed by a zero padding.}
  This general algorithmic framework covers many ML algorithms. One example is that a master server performs the algorithm update based on information computed and communicated from many servers; see the first example in Section~\ref{section:Application}.
   The framework in Eq.~\eqref{eq:ItAlg1} also covers distributed learning algorithms where the nodes communicate over a communication graph $(\mathcal{N},\mathcal{E})$ where $\mathcal{N}:=\{1,2,\cdots,N\}$ stands for the nodes and $\mathcal{E}\subset \mathcal{N}\times \mathcal{N}$ stands for the communication links.
    This can be captured by the following iterations 
\begin{equation}
\begin{aligned} \label{eq:ItAlg2}
     \vec{x}_i^{k+1}=& A_i(\vec{c}_{\mathcal{N}_i}^{k},    \vec{x}_i^k),\\
    \vec{c}_i^{k+1}=& C_i(\vec{x}_i^{k+1}),
 \end{aligned}
 \end{equation}
where $\mathcal{N}_i:=\{j\in \mathcal{N} : (i,j)\in \mathcal{E}\}$ denotes the set of neighbors of node $i$, the function
 $A_i:\R^{d}\times \mathcal{X}_i\rightarrow \mathcal{X}_i$ is  the local algorithm update at node $i$, and  $C_i:\mathcal{X}_i\rightarrow \R^{d}$ is the information that node $i$ communicates to its neighbors. To express the algorithm in the form of Eq.~\eqref{eq:ItAlg1} we set
$$\vec{x}^k=(\vec{x}_1^k,\ldots,\vec{x}_N^k)~~\text{ and }~~\vec{c}^k=(\vec{c}_1^k,\ldots,\vec{c}_N^k),$$
and define the function  $A:\R^{Nd}\times \mathcal{X}\rightarrow \mathcal{X}$, $\mathcal{X}=\prod_{i=1}^N \mathcal{X}_i$, resulting in
$$A(\vec{c},\vec{x})=(A_1(\vec{c}_{\mathcal{N}_1},\vec{x}_1),\ldots,A_N(\vec{c}_{\mathcal{N}_N},\vec{x}_N)).$$
The focus of this paper are algorithms in the form of Eq.~\eqref{eq:ItAlg1} that have linear convergence rates, which we define as follows.
 \begin{defin} \label{Defin:LinearConvergence}
   We say that the algorithm in the form of Eq.~\eqref{eq:ItAlg1} is $\sigma$-linear convergent in the norm $||\cdot||$ if:  
    \begin{enumerate}[a)]
        \item The function
         $\vec{x} \mapsto A(C(\vec{x}), \vec{x})$ is $\sigma$-pseudo contractive on $\mathcal{X}$, i.e., there exists $\vec{x}^{\star}\in \mathcal{X}$ such that
        $$||A(C(\vec{x}), \vec{x})-\vec{x}^{\star}||\leq \sigma ||\vec{x}-\vec{x}^{\star}||,$$
         for all $\vec{x}\in \mathcal{X}$.
       \item         There exist $L_C$ and $L_A$ such that  
 \begin{subequations} \label{eq:AlgLips}
\begin{align}
            ||A(\vec{c}_1,\vec{x})-A(\vec{c}_2,\vec{x})||\leq& L_A ||\vec{c}_1-\vec{c}_2||_{\infty} ,
             \label{eq:AlgLips-1} \\
             ||C(\vec{x}_1)-C(\vec{x}_2)||_{\infty}\leq& L_C ||\vec{x}_1-\vec{x}_2||,
              \label{eq:AlgLips-2}
        \end{align}
 \end{subequations}
        for all $\vec{x}_1,\vec{x}_2,\vec{x}\in \mathcal{X}$ and $\vec{c}_1,\vec{c}_2\in\R^{Nd}$.
    \end{enumerate}
 \end{defin}
Note that this definition implies that the algorithm converges linearly in both $\vec{c}$ and $\vec{x}$ to some fixed points $\vec{x}^{\star}\in \mathcal{X}$ and $\vec{c}^{\star}\in \R^{Nd}$. In particular, we have
\begin{align*}
    ||\vec{x}^k-\vec{x}^{\star}||\leq& \sigma^k ||\vec{x}^0-\vec{x}^{\star}|| ~~~~~~~~\text{ and }\\
    ||\vec{c}^k-\vec{c}^{\star}||_{\infty}\leq& L_C \sigma^k ||\vec{x}^0-\vec{x}^{\star}||~~~~\text{ for all }~k\in \N.
\end{align*}
Definition~\ref{Defin:LinearConvergence}-b) is a smoothness condition on the mapping between  the algorithm iterations $\vec{x}$ and the communication variables $\vec{c}$. This is needed when considering quantized communication because quantization creates a noise that affects the algorithm performance. We use $L_{\infty}$-norm to measure the distance between communication variables because  our quantization (given in Section~\ref{Sec:Maintain}) is  a rectangular grid where distances are measured in the $L_{\infty}$-norm (Manhattan distance).

Many parallel and distributed algorithms can be expressed in this  form and have $\sigma$-linear convergence under certain conditions~\cite{bertsekas1989parallel}.
 For example, algorithms for learning Equilibriums in games~\cite{cui2008game,chen2010random,nekouei2016performance},
  coordination algorithms in communication networks~\cite{yates1995framework,Fischione_2011,Jakobsson_2016},
 and distributed optimization algorithms~\cite{shi2014linear,qu2018harnessing,uribe2018dual}. Below, we give two concrete examples of distributed learning algorithms in this form. 


\subsection{Application: Distributed Learning and Optimization} \label{section:Application}
 Consider the following optimization  problem
\begin{equation} \label{eq:mainOptProb}
\begin{aligned}
& \underset{\vec{z}\in \R^d}{\text{minimize}}
& &  F(\vec{z})= \sum_{i=1}^N f_i(\vec{z})
\end{aligned}
\end{equation}
 where $N$ nodes wish to learn a parameter $\vec{z}\in \R^d$ by minimizing the sum of local loss functions $f_i:\R^d\rightarrow \R$ based on the private data locally available at node $i$.
 We make the following assumption.
 \begin{assumption} \label{assumption:smooth}
   For $i{=}1,{\ldots},N$,  $f_i(\cdot)$ is $\mu$-strongly convex and has $L$-Lipschitz continuous gradient. 
 \end{assumption}

Below, we illustrate two typical algorithms, covered by our model in Eq.~\eqref{eq:ItAlg1}, that use different communication structures.

   \subsubsection{Decentralized Learning}  \label{sec:ExampleDL} 
 This problem is typically solved using the decentralized gradient update 
  $$\vec{x}^{k+1}=\vec{x}^k-\gamma\sum_{i=1}^N \nabla f_i(\vec{x}^k).$$
 To perform this update some communication is needed. The most common communication protocols are a) the nodes broadcast their gradients, then the nodes perform the gradient update locally, b) the nodes communicate their gradients to a master node that performs the gradient update.
   In either case,  this  algorithm is captured by the model in Eq.~\eqref{eq:ItAlg1} as follows
   \begin{equation}
   \label{eq:DL_star}
   \begin{aligned}
    \vec{x}^{k+1}=&A(\vec{c}^k,\vec{x}^k)= \vec{x}^k- \gamma\sum_{i=1}^N \vec{c}_i^k\  \\
    \vec{c}_i^{k+1}=& C_i(\vec{x}^{k+1})=  \nabla f_i(\vec{x}^{k+1}) ~\text{for } i=1,\ldots,N.
  \end{aligned}
  \end{equation}
  This algorithm is linearly convergent,
  following the standard analysis of gradient descent. 
  \begin{prop}[\cite{nesterov2013introductory}] \label{Theorem:Grad1}
   Let $A(\cdot)$ and $C(\cdot)$ be the functions defined in Eq.~\eqref{eq:DL_star} and
    $\vec{x}^{\star}$ be the optimal solution to the optimization problem in Eq.~\eqref{eq:mainOptProb}.
  Then
    $$||A(C(\vec{x}), \vec{x})-\vec{x}^{\star}||_2\leq \sigma ||\vec{x}-\vec{x}^{\star}||_2,~~\text{ for all }~\vec{x}\in \R^d,$$
where $\sigma\in [0,1)$ if $\gamma$ is small enough. For example,   $\sigma = 1-2/(\kappa+1)$ if $\gamma=2/[N(\mu+L)]$, where $\kappa=L/\mu$.
  It can also be verified that 
  Eq.~\eqref{eq:AlgLips-1} and Eq.~\eqref{eq:AlgLips-2} hold with  $L_A=\gamma N \sqrt{d}$ and $L_C=L$.
   \end{prop}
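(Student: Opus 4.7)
The plan is to reduce the statement to the classical convergence analysis of gradient descent applied to the aggregate function $F(\vec{x})=\sum_{i=1}^N f_i(\vec{x})$, and then to verify the two Lipschitz inequalities by direct calculation. First I would observe that under the definitions in Eq.~\eqref{eq:DL_star}, the composition $A(C(\vec{x}),\vec{x})$ is simply the gradient-descent iteration $\vec{x}-\gamma\,\nabla F(\vec{x})$. By Assumption~\ref{assumption:smooth}, the aggregate $F$ is $N\mu$-strongly convex with $NL$-Lipschitz gradient, so its condition number coincides with $\kappa=L/\mu$. Optimality of $\vec{x}^\star$ gives $\nabla F(\vec{x}^\star)=0$, hence
\begin{equation*}
A(C(\vec{x}),\vec{x})-\vec{x}^\star=(\vec{x}-\vec{x}^\star)-\gamma\bigl(\nabla F(\vec{x})-\nabla F(\vec{x}^\star)\bigr).
\end{equation*}

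Next I would invoke the standard contraction estimate for gradient descent on a smooth strongly convex function (e.g., Theorem~2.1.15 of Nesterov's textbook). Using the co-coercivity inequality $\langle \nabla F(\vec{x})-\nabla F(\vec{x}^\star),\vec{x}-\vec{x}^\star\rangle \ge \tfrac{N\mu\cdot NL}{N\mu+NL}\|\vec{x}-\vec{x}^\star\|_2^2+\tfrac{1}{N\mu+NL}\|\nabla F(\vec{x})-\nabla F(\vec{x}^\star)\|_2^2$, one expands $\|A(C(\vec{x}),\vec{x})-\vec{x}^\star\|_2^2$ and gets the contraction with rate $\sigma^2=\bigl(\tfrac{\kappa-1}{\kappa+1}\bigr)^{2}$ at the step size $\gamma=2/[N(\mu+L)]$, which is precisely $\sigma=1-2/(\kappa+1)$. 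The same calculation shows that any sufficiently small $\gamma$ yields some $\sigma\in[0,1)$, which handles the general claim.

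For part (b), both Lipschitz bounds follow by direct computation. Since $A(\vec{c}_1,\vec{x})-A(\vec{c}_2,\vec{x})=-\gamma\sum_{i=1}^N(\vec{c}_{1,i}-\vec{c}_{2,i})$, the triangle inequality and the elementary bound $\|\vec{v}\|_2\le\sqrt{d}\,\|\vec{v}\|_\infty$ for $\vec{v}\in\R^d$ give
\begin{equation*}
\|A(\vec{c}_1,\vec{x})-A(\vec{c}_2,\vec{x})\|_2\le \gamma\sum_{i=1}^N \sqrt{d}\,\|\vec{c}_{1,i}-\vec{c}_{2,i}\|_\infty\le \gamma N\sqrt{d}\,\|\vec{c}_1-\vec{c}_2\|_\infty,
\end{equation*}
so $L_A=\gamma N\sqrt{d}$. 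For $L_C$, note that by definition $\|C(\vec{x}_1)-C(\vec{x}_2)\|_\infty=\max_i\|\nabla f_i(\vec{x}_1)-\nabla f_i(\vec{x}_2)\|_\infty\le \max_i \|\nabla f_i(\vec{x}_1)-\nabla f_i(\vec{x}_2)\|_2\le L\|\vec{x}_1-\vec{x}_2\|_2$ by Assumption~\ref{assumption:smooth}, giving $L_C=L$.

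There is no real obstacle here; the proposition is a restatement of standard gradient-descent facts. The only thing to be careful about is the mixed-norm bookkeeping in part (b): the communication vector $\vec{c}$ lives in $\R^{Nd}$ equipped with $\|\cdot\|_\infty$ whereas the iterate $\vec{x}$ is measured in $\|\cdot\|_2$, which is the reason for the $\sqrt{d}$ factor in $L_A$ and for the chain $\|\cdot\|_\infty\le\|\cdot\|_2$ used in $L_C$. All other manipulations are routine.
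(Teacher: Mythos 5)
Your proposal is correct and follows essentially the same route as the paper, which does not spell out a proof but simply cites the standard gradient-descent contraction estimate (Nesterov, Theorem~2.1.15) after identifying $A(C(\vec{x}),\vec{x})$ with $\vec{x}-\gamma\nabla F(\vec{x})$ for the $N\mu$-strongly convex, $NL$-smooth aggregate $F$. Your verification of $L_A=\gamma N\sqrt{d}$ and $L_C=L$, including the mixed-norm bookkeeping with $\|\vec{v}\|_2\le\sqrt{d}\,\|\vec{v}\|_\infty$ and $\|\cdot\|_\infty\le\|\cdot\|_2$, is exactly the routine computation the proposition alludes to with ``it can also be verified.''
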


 Borrowing  terminology from Communication and Information Theory, the communication channel from the master node to the workers is called a broadcast channel, whereas the reverse is called a multiple-access channel (MAC)~\cite{el2011network}. Generally speaking, the main bottleneck in this two-way communication system (between workers and master node) is the MAC channel where there is a risk of co-channel interference if multiple workers simultaneously send their information messages. In the case of strong interference, also called collision, the information messages might not be decodable at the master node and the workers should re-transmit their messages, consuming  much energy if they are battery-powered. These challenges are exacerbated in wireless networks due to random channel attenuation and background noise~\cite{akyildiz2002wireless}. Consequently, state-of-the-art communication technologies need additional coordination and  advanced coding techniques in the MAC channel.
 In the algorithm in Eq.~\eqref{eq:DL_star},  we  focus on the communication from the worker nodes to the master node (the MAC channel) and do not specially formulate how the master node broadcasts $\vec{x}^k$ and assume zero latency (a conventional ideal broadcast channel). 
   However, our framework can easily be adjusted to include the latency due to the broadcast channel of the master.

\subsubsection{Distributed Learning over a Network: Dual Decomposition}  \label{sec:ExamplesDL-DD}   
Distributed algorithms where the nodes cooperatively solve  the problem in Eq.~\eqref{eq:mainOptProb} over a connected communication network  $(\mathcal{N},\mathcal{E})$ can also be modelled in the form of Eq.~\eqref{eq:ItAlg1}, e.g., using dual decomposition~\cite{uribe2018dual}, ADMM~\cite{shi2014linear}, or  distributed consensus  gradient methods~\cite{qu2018harnessing}.
 All of these algorithms have linear convergence rate under Assumption~\ref{assumption:smooth}. 
  We  illustrate this for the distributed dual decomposition. Suppose that the nodes communicate over an undirected  graph and
   let $\vec{W}\in \R^{N\times N}$ denote the Laplacian matrix of that graph, i.e., $\vec{W}_{ij}=-1$ if $(i,j)\in \mathcal{E}$,
   $\vec{W}_{ij}=|\mathcal{N}_i|$ if $i=j$, and $\vec{W}_{ij}=0$ otherwise.
  We can then write the problem in Eq.~\eqref{eq:mainOptProb} equivalently   as
\begin{equation} \label{eq:mainOptProb-dual-IP}
\begin{aligned}
& \underset{c}{\text{minimize}}
& &   \sum_{i=1}^N f_i(\vec{c}_i)  \\
&  \text{subject to} && \bec{W}\vec{c}=\vec{0} 
\end{aligned}
\end{equation}
where $ \bec{W}=\vec{W}\otimes \vec{I} $, $\vec{c}=(\vec{c}_1,\ldots,\vec{c}_N)$, and $\vec{c}_i$ is a local copy node $i$ has of the variable $\vec{z}$.
 The constraint $\bec{W}\vec{c}=\vec{0}$ ensures the consensus between all the local copies $\vec{c}_i$ for $i=1,\ldots,N$, provided that the network is connected.
 We obtain a distributed algorithm by considering the dual of the problem in Eq.~\eqref{eq:mainOptProb-dual-IP}, we illustrate the details of algorithm derivation in Appendix. 
   The algorithm reduces to the  following steps. 
  Initialize $\vec{x}_i^0=\vec{0}$ and $\vec{c}_i^{0} = \text{argmin}_{\vec{c}_i} ~f_i(\vec{c}_i)$  and for $k\in \N$
\begin{equation}
\begin{aligned} \label{eq:ItAlg2Grad}
    \vec{x}_i^{k+1} =&A_i(\vec{c}_{\mathcal{N}_i}^{k},    \vec{x}_i^k):=  \vec{x}_i^k+\gamma \sum_{j=1}^n W_{ij}  \vec{c}_i^{k}  \\
   \vec{c}_i^{k+1} =& C_i(\vec{x}_i^{k+1}):=\underset{\vec{c}_i}{\text{argmin}}~ f_i(\vec{c}_i)+ \langle \vec{c}_i, \vec{x}_i^{k+1} \rangle.
\end{aligned}
\end{equation}
  Note that here $\vec{x}_i$ are the dual variables and $\vec{c}_i$ are primal problem variables.  The algorithm is in the form of Eq.~\eqref{eq:ItAlg2} which is a special case of our algorithm framework in Eq.~\eqref{eq:ItAlg1}.
 It is also linear convergent as presented in the following theorem, proved in Appendix~\ref{Appendix:Proof_Prop2}. 
\begin{prop} \label{THM:DD-alg}
    If we set $\mathcal{X}:=\texttt{Im}(\vec{W}\otimes \vec{I})$ then
    $A(\vec{c},\vec{x})\in \mathcal{X}$ for any $\vec{c}\in \R^{d}$ and $\vec{x}\in \mathcal{X}$.
    Moreover, if we choose 
    $$\gamma= \frac{2L\mu}{\mu\lambda_{\min}^+(\vec{W})+L\lambda_{\max}(\vec{W})}$$ then for all $\vec{x}\in  \mathcal{X}$ we have 
   \begin{align}
       ||A(C(\vec{x}),\vec{x})-\vec{x}^{\star}||_{\vec{M}} \leq&~~ \sigma ||\vec{x}-\vec{x}^{\star}||_{\vec{M}},
   \end{align}
 where
 $\sigma=1-2/(\kappa({\vec{W}})+1)$,  $\kappa({\vec{W}})=\lambda_{\max}(\vec{W}) L/ (\mu \lambda_{\min}^+(\vec{W}))$,
  $||\cdot||_{\vec{M}}$ is a norm on $\mathcal{X}$, 
and   $\vec{x}^{\star}$ is the unique fixed point of $A(C(\vec{x}),\vec{x})$ in $\mathcal{X}$.
   There exist constants $L_A$ and $L_C$ such that Eq.~\eqref{eq:AlgLips-1} and Eq.~\eqref{eq:AlgLips-2} hold true. 
   See Appendix~\ref{Appendix:Proof_Prop2} for $L_A$, $L_C$, and the definition of
 $||\cdot||_{\vec{M}}$.
 \end{prop}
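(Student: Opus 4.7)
The plan is to recognize Eq.~\eqref{eq:ItAlg2Grad} as classical dual gradient ascent on the Fenchel dual of the consensus-reformulated problem Eq.~\eqref{eq:mainOptProb-dual-IP}, and then invoke the textbook contraction rate for gradient descent on a strongly-convex, smooth function. Invariance of $\mathcal{X}$ is immediate: writing $A(\vec{c},\vec{x})=\vec{x}+\gamma\bar{\vec{W}}\vec{c}$ with $\bar{\vec{W}}:=\vec{W}\otimes\vec{I}$, the additive update keeps every iterate in $\texttt{Im}(\bar{\vec{W}})=\mathcal{X}$ since $\bar{\vec{W}}\vec{c}\in\texttt{Im}(\bar{\vec{W}})$ trivially and $\mathcal{X}$ is a linear subspace.

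For the contraction I would factor the graph Laplacian as $\vec{W}=\vec{E}^T\vec{E}$ through a signed incidence matrix $\vec{E}\in\R^{|\mathcal{E}|\times N}$, set $\bar{\vec{E}}:=\vec{E}\otimes\vec{I}$, and pass to a dual variable $\vec{y}$ via $\vec{x}=\bar{\vec{E}}^T\vec{y}$; restricted to $\texttt{Im}(\bar{\vec{E}})$, this map is a bijection onto $\mathcal{X}$. The optimality condition of the inner minimization in Eq.~\eqref{eq:ItAlg2Grad} gives $\vec{c}_i^{k+1}=\nabla f_i^{*}(-\vec{x}_i^{k+1})$, so substituting $\vec{x}=\bar{\vec{E}}^T\vec{y}$ turns the $\vec{x}$-recursion into $\vec{y}^{k+1}=\vec{y}^k+\gamma\nabla q(\vec{y}^k)$, where $q(\vec{y})=-\sum_i f_i^{*}\bigl(-(\bar{\vec{E}}^T\vec{y})_i\bigr)$ is the concave Fenchel dual of Eq.~\eqref{eq:mainOptProb-dual-IP}. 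By Assumption~\ref{assumption:smooth} each conjugate $f_i^{*}$ is $1/L$-strongly convex and $1/\mu$-smooth, so the Hessian of $-q$ equals $\bar{\vec{E}}\,\diag_i(\nabla^2 f_i^{*})\,\bar{\vec{E}}^T$ and its eigenvalues on $\texttt{Im}(\bar{\vec{E}})$ are sandwiched between $\mu_D:=\lambda_{\min}^+(\vec{W})/L$ and $L_D:=\lambda_{\max}(\vec{W})/\mu$. The stated $\gamma$ is exactly $2/(L_D+\mu_D)$, so the textbook stepsize-optimal rate $\sigma=(L_D-\mu_D)/(L_D+\mu_D)=1-2/(\kappa(\vec{W})+1)$ yields $||\vec{y}^{k+1}-\vec{y}^{\star}||_2\leq\sigma||\vec{y}^k-\vec{y}^{\star}||_2$ on $\texttt{Im}(\bar{\vec{E}})$.

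Finally I would pull this contraction back to $\vec{x}$. Using the Moore--Penrose identity $(\bar{\vec{E}}^T)^{\dagger}=\bar{\vec{E}}\,(\vec{W}^{\dagger}\otimes\vec{I})$, the choice $||\vec{x}||_{\vec{M}}^2:=\vec{x}^T(\vec{W}^{\dagger}\otimes\vec{I})\vec{x}$ realizes $\vec{y}\mapsto\bar{\vec{E}}^T\vec{y}$ as an isometry from $(\texttt{Im}(\bar{\vec{E}}),||\cdot||_2)$ onto $(\mathcal{X},||\cdot||_{\vec{M}})$ and is a genuine norm on $\mathcal{X}$, so the $\vec{y}$-contraction transports verbatim to the claimed bound on $||A(C(\vec{x}),\vec{x})-\vec{x}^{\star}||_{\vec{M}}$. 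The Lipschitz estimates are then routine: $A$ is affine in $\vec{c}$ with $||\bar{\vec{W}}(\vec{c}_1-\vec{c}_2)||_{\vec{M}}=||\bar{\vec{E}}(\vec{c}_1-\vec{c}_2)||_2$, so $L_A$ can be taken proportional to $\gamma\sqrt{\lambda_{\max}(\vec{W})Nd}$; each $C_i=\nabla f_i^{*}(-\cdot)$ is $(1/\mu)$-Lipschitz in $||\cdot||_2$, which combined with the norm-equivalence $||\cdot||_2\leq\sqrt{\lambda_{\max}(\vec{W})}\,||\cdot||_{\vec{M}}$ on $\mathcal{X}$ gives $L_C=\sqrt{\lambda_{\max}(\vec{W})}/\mu$. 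The trickiest bookkeeping I anticipate is the null-space handling---confirming that $||\cdot||_{\vec{M}}$ is a bona fide norm once restricted to $\mathcal{X}$ (rather than only a seminorm on $\R^{Nd}$), and that the restricted fixed point $\vec{x}^{\star}\in\mathcal{X}$ is unique and recovers the optimal consensus solution of Eq.~\eqref{eq:mainOptProb-dual-IP}.
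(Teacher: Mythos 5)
Your proposal is correct and follows essentially the same route as the paper's proof in Appendix~\ref{Appendix:Proof_Prop2}: both recognize Eq.~\eqref{eq:ItAlg2Grad} as dual gradient ascent, use that the Fenchel conjugates make the dual $\lambda_{\min}^+(\vec{W})/L$-strongly concave and $\lambda_{\max}(\vec{W})/\mu$-smooth on the relevant subspace, apply the textbook step-size-optimal contraction rate, and pull the contraction back to $\vec{x}$ through an isometry. The only cosmetic difference is that you factor the Laplacian through the signed incidence matrix while the paper uses the symmetric square root $\sqrt{\vec{W}\otimes\vec{I}}$; since $||\vec{x}||_{\vec{M}}^2=\vec{x}\tran(\vec{W}^{\dagger}\otimes\vec{I})\vec{x}$ depends only on $\vec{W}$, the two constructions give the identical norm and constants.
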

\noindent The proposition shows that the distributed algorithm in Eq.~\eqref{eq:ItAlg2Grad} over the communication graph $(\mathcal{N},\mathcal{E})$ is indeed contractive.  Note that the contractivity parameter $\sigma$ depends on the spectral properties of the graph via $\kappa({\vec{W}})$.  For example, for a complete graphs, we have $\lambda_{\max}(\vec{W}) = \lambda_{\min}^+(\vec{W})=N$ meaning that $\kappa({\vec{W}})=L/\mu$, which is the condition number of the convex problem in Eq.~\eqref{eq:mainOptProb-dual-IP}.  In other words, for complete graphs the algorithm in Eq.~\eqref{eq:ItAlg2Grad} has the same convergence rate as the decentralized algorithm in Eq.~\eqref{eq:DL_star}. For general graphs, the convergence rate becomes slower as $\lambda_{\max}(\vec{W})/\lambda_{\min}^+(\vec{W})$ grows. 
Note that since the graph is connected, $\lambda_{\min}^+(\vec{W})$ is the second smallest eigenvalue of the graph Laplacian matrix (also known as the algebraic connectivity of the graph), which is often used to characterize  connectivity or propagation times in networks~\cite[Appendix B]{sayed2014diffusion}.

\section{Maintaining Convergence Under Limited Communication} \label{Sec:Maintain}

In this section, we illustrate how we can limit the communication of algorithms with $\sigma$-linear convergence rate (Definition~\ref{Defin:LinearConvergence}) to a few bits per iteration while still maintaining the linear convergence rate. 
 We first illustrate how we quantize the communication in the algorithm (Section~\ref{sec:LimComALg}) and then provide our main convergence results (Section~\ref{sec:MainRes}). 

\subsection{Limited Communication Algorithms} \label{sec:LimComALg}

\begin{figure}


\centering
\begin{tikzpicture}
\draw[step=0.5cm,gray!40,thin] (-2,-2) grid (2,2);
\draw[thin] (-2,-2) -- (2,-2) -- (2,2) -- (-2,2) -- (-2,-2);
\draw[thick,<->,>=stealth] (-2,0) -- (0,0); 
\draw[thick,<->,>=stealth] (0,-2) -- (0,0);
\node[] at (0,0) {$\bullet$};
\node[] at (0.3,0.3) {$\vec{q}_i^{k}$};
\node[] at (-1.0,1.5) {\color{blue}$\bullet$};
\node[] at (-1.2,1.65) {\color{blue}$\vec{q}_i^{k+1}$};
\node[] at (-0.8,1.35) {\color{red}$\bullet$};
\node[] at (-0.3,1.39) {\color{red}$\vec{c}_i^{k}$};
\node[] at (0.5,-1) {$r$};
\node[] at (-1,0.3) {$r$};

\draw[thick,<->,>=stealth] (1,1.5) -- (1.5,1.5);
\node[] at (1.25,1.8) {$\delta^{k}$};

\node[] at (-2.2,-2.2) {$1$};
\node[] at (-2.2,-1.5) {$2$};
\node[] at (-2.2,-1) {$3$};
\node[] at (-2.2,-0.5) {$4$};
\node[] at (-2.2,0) {$5$};
\node[] at (-2.2,1) {$\vdots$};
\node[] at (-2.3,2) {$2^b$};

\node[] at (-1.5,-2.2) {$2$};
\node[] at (-1,-2.2) {$3$};
\node[] at (-0.5,-2.2) {$4$};
\node[] at (0,-2.2) {$5$};
\node[] at (1,-2.3) {$\cdots$};
\node[] at (2.05,-2.25) {$2^b$};

\end{tikzpicture}
  \caption{The $b$-bit (per entry) quantization $\vec{q}_i^{k+1}=\texttt{quant}_i(\vec{c}_i^k,\vec{q}^k_i,r^k,b^k)$ projects the point $\vec{c}_i^k$ to the closest point on the grid.
  The grid is centered at $\vec{q}_i^k$ (which is available to the receiver from the previous iteration) and has the width $2r^k$ and the length between points is  $\delta=2r^k/( 2^b-1)$. 
  } 
 \label{fig:Quant}
\end{figure}
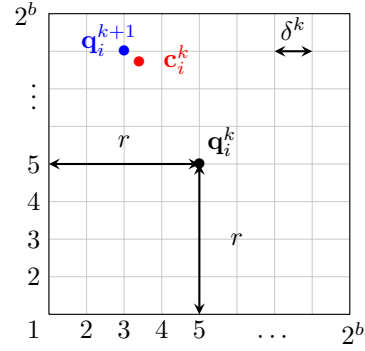

We consider the following $b$-bit (per dimension) quantized version of the algorithms in the form of Eq.~\eqref{eq:ItAlg1}. 
We initialize by setting $\vec{x}^0\in \mathcal{X}$ and  $\vec{q}_i^0=\vec{c}_i^0= C_i(\vec{x}_i^0)$ for $i=1,\ldots,N$ and then do the following iterations for $k\in \N$:
\begin{subequations} \label{eq:ItAlg3}
 \begin{align}
     \vec{x}^{k+1}=& A_i(\vec{q}^{k},    \vec{x}^k), \label{subeq:ItAlg3a} \\
    \vec{c}_i^{k+1}=& C_i(\vec{x}^{k+1}) \label{subeq:ItAlg3b} \\
    \vec{q}_i^{k+1}=& \texttt{quant}_i(\vec{c}_i^{k+1},\vec{q}_i^k,r^k,b) \label{subeq:ItAlg3c}
 \end{align}
\end{subequations}
where $\vec{q}=(\vec{q}_1,\ldots,\vec{q}_N)$.
 The steps in Eq.~\eqref{subeq:ItAlg3a} and Eq.~\eqref{subeq:ItAlg3b}  are essentially the same as  the original unquantized algorithm in Eq.~\eqref{eq:ItAlg1}.
 The main difference is in the quantization step in  Eq.~\eqref{subeq:ItAlg3c}.
 The  variable  $\vec{q}_i^{k+1}$ denotes a $b$-bit (per dimension) quantization   of $ \vec{c}_i^{k+1}$.
 The quantization is done by  using the quantization function $\texttt{quant}_i(\vec{c}_i,\vec{q}_i,r,b)$ that
 projects the point $\vec{c}_i$ to the closest point on the grid illustrated in Figure~\ref{fig:Quant}, which can be represented by $db$-bits.
  The grid is centered at $\vec{q}_i^{k}$, which is available to the receiver from the previous iteration.
  The grid  has the width $2r^k$ and therefore  $r^k$ controls the accuracy of the quantization. 
 We will show  how we can control the accuracy $r^k$ so that its decrease will follow the convergence  of the algorithm. 
 This  allows us to maintain the convergence rate of the algorithms even though we use only $b$ bits to quantize and  communicate  per iteration. 
We can formally define the quantization as follows:
 \begin{defin}
     Let
     $\texttt{quant}_i:\R^d\times\R^d\times \R_+\times \N\rightarrow\R^d$ be the quantization function defined component-wise as follows, where $ \delta(r,b):=r/( 2^b -1)$,
     \begin{align*} &[\texttt{quant}_i(\vec{c},\vec{q},r,b)]_j      = \\
                           &          \begin{cases} \vec{q}_j-r  & \text{if } \vec{q}_j\leq \vec{c}_j-r +\delta(r,b) \\
                                     \vec{q}_j+r  & \text{if } \vec{q}_j \geq \vec{c}_j+r- \delta(r,b) \\
                                     \vec{q}_j- r + 2\delta(r,b)\left\lfloor  \frac{\vec{c}_j-\vec{q}_j+r+\delta(r,b)}{2\delta(r,b)}  \right\rfloor
                                                     & \text{otherwise.}               \end{cases}
     \end{align*}
 \end{defin}
 The following result connects the number of quantization bits $b$ and the precision  of the quantization.
 \begin{lemma} \label{lemma:quant}
  Let $\vec{q}_i\in \R^d$ be given for some $i=1 ,\ldots,N$. 
  Then for all $\vec{c}_i\in \R^d$,  
   such that $||\vec{c}_i-\vec{q}_i||_{\infty} \leq r$ 
   we have
  \begin{align*}
     || \texttt{quant}_i(\vec{c}_i,\vec{q}_i,r,b) -\vec{c}_i||_{\infty} \leq& \frac{ r}{2^b-1}. 
 \end{align*}
\end{lemma}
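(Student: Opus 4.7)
Since both the hypothesis $\lVert \vec{c}_i - \vec{q}_i \rVert_\infty \leq r$ and the target bound are expressed in the $\ell_\infty$ norm, and since $\texttt{quant}_i$ is defined componentwise, my plan is to reduce the claim to a one-dimensional statement. Fix a coordinate $j\in\{1,\ldots,d\}$ and write $c := (\vec{c}_i)_j$, $q := (\vec{q}_i)_j$, and $\delta := \delta(r,b) = r/(2^b-1)$. It suffices to show that, under $|c-q|\leq r$, the $j$-th component $\tilde c$ of $\texttt{quant}_i(\vec{c}_i,\vec{q}_i,r,b)$ satisfies $|\tilde c - c| \leq \delta$; taking the maximum over $j$ then produces the claim.

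The proof has two conceptual ingredients. First, I will identify the one-dimensional quantization lattice
$\mathcal{G} := \{\, q - r + 2\delta k : k = 0,1,\ldots,2^b-1 \,\}$,
which is a uniform grid with spacing $2\delta$ and endpoints $q\pm r$, exactly covering the interval $[q-r,q+r]$ (this matches Figure~\ref{fig:Quant}). Because the hypothesis places $c$ inside this interval, the nearest lattice point of $\mathcal{G}$ to $c$ lies within $\delta$ of $c$, since any real number in a length-$2\delta$ cell of $\mathcal{G}$ is within $\delta$ of one of its endpoints. Second, I will check that each of the three branches in the piecewise definition of $\texttt{quant}_i$ indeed returns a lattice point within $\delta$ of $c$.

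For the interior branch, the key algebraic manipulation is to rewrite the argument of the floor as $u + \tfrac{1}{2}$, where $u := (c - q + r)/(2\delta)$; this turns $\lfloor (c-q+r+\delta)/(2\delta)\rfloor$ into the standard nearest-integer rounding of $u$. The output $q - r + 2\delta \lfloor u+\tfrac12 \rfloor$ is then precisely the lattice point of $\mathcal{G}$ nearest $c$, and the error equals $2\delta\,|u - \lfloor u+\tfrac12\rfloor| \leq 2\delta \cdot \tfrac12 = \delta$. For the two boundary branches the output is one of the extreme grid points $q\pm r$; the branch hypothesis places $c$ within $\delta$ of the corresponding endpoint of $[q-r,q+r]$, so the error is again bounded by $\delta$.

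The step I expect to require the most care is the bookkeeping in the previous paragraph: verifying that the three branches collectively cover the entire hypothesis set $\{c : |c-q|\leq r\}$ without contradiction and that on each branch the selected grid point is in fact within $\delta$ of $c$ (in particular, checking the inequalities at the thresholds $c-q = \pm(r-\delta)$ where the interior branch meets the boundary branches, and confirming that when the interior formula is applied under $|c-q|\leq r$ its floor lies in $\{0,\ldots,2^b-1\}$, so the output stays on $\mathcal{G}$). Once this case analysis is complete, aggregating the per-coordinate bound $\delta = r/(2^b-1)$ across the $d$ dimensions yields the stated $\ell_\infty$ bound and finishes the proof.
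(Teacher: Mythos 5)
The paper states Lemma~\ref{lemma:quant} without proof (none of the appendices covers it), so there is no in-paper argument to compare against; your write-up supplies the missing proof and is essentially correct. The reduction to a single coordinate, the identification of the grid $\{q-r+2\delta k : k=0,\ldots,2^b-1\}$ spanning $[q-r,q+r]$ with spacing $2\delta$, and the rewriting of the interior branch as nearest-integer rounding of $u=(c-q+r)/(2\delta)$ --- so that the error is $2\delta\,|u-\lfloor u+\tfrac12\rfloor|\le\delta$ --- are all sound, and your range check that $\lfloor u+\tfrac12\rfloor\in\{1,\ldots,2^b-2\}$ in the interior case is exactly the bookkeeping needed to keep the output on the grid. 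One caveat worth flagging: in the printed definition of $\texttt{quant}_i$ the two boundary branches have their outputs transposed relative to their conditions. The first branch fires when $\vec{q}_j\le\vec{c}_j-r+\delta(r,b)$, i.e.\ when $\vec{c}_j$ is within $\delta(r,b)$ of $\vec{q}_j+r$, yet returns $\vec{q}_j-r$ (and symmetrically for the second branch), which as literally written would give an error as large as $2r$. Your treatment of the boundary cases silently assumes the corrected pairing --- output $q-r$ when $c$ is near $q-r$ and $q+r$ when $c$ is near $q+r$ --- which is clearly what Figure~\ref{fig:Quant} and the lemma intend; with that reading your three cases cover all of $\{c:|c-q|\le r\}$, each returns a grid point within $\delta(r,b)=r/(2^b-1)$ of $c$, and taking the maximum over coordinates gives the stated $\ell_\infty$ bound.
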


\subsection{Main Result: Maintaining the Linear Convergence} \label{sec:MainRes}

 We now illustrate our main results.
 We first provide few assumptions on what information must be available a priori to running the algorithm so that the quantization can be performed.
 \begin{assumption} \label{assumption:contr}
 The algorithm is $A(C(\vec{x}),\vec{x})$ is  $\sigma$-linear (Definition~\ref{Defin:LinearConvergence}) and the following
 information is available before running the algorithm:
     a) The parameter $\sigma$; 
      b) The initialization of the quantization variable  $\vec{q}_i^0$ ~for $i=1,\ldots,N$; 
       c) A bound $D$ such that either i) $||\vec{x}^1-\vec{x}^{0}||\leq (1-\sigma)D$ or ii) $||\vec{x}^0-\vec{x}^{\star}||\leq D$. 
 \end{assumption}

 That is, we assume that  the parameters $\sigma$, $D$,  and the initialization $\vec{q}_i^0$, for $i=1,\ldots,N$, are known before running the algorithm.
 The convergence rate $\sigma$ can often be computed easily in advance, e.g., similarly as we did in Theorem~\ref{Theorem:Grad1} and~\ref{THM:DD-alg} in Section~\ref{section:Application}. 
 Prior knowledge of  $\vec{q}_i^0$ is also reasonable since setting its value is part of  initializing the algorithm.  
 To obtain the prior knowledge  of $D$, we note that it suffices to find any upper bound on either $||\vec{x}^1-\vec{x}^0||$ or $||\vec{x}^0-\vec{x}^{\star}||$.
 In optimization we can often use the problem structure to bound $||\vec{x}^0-\vec{x}^{\star}||$.
 For example, the optimization problem in Eq.~\eqref{eq:mainOptProb} is
  $N\mu$-strongly convex so from~\cite[Theorem~2.1.8]{nesterov2013introductory} we have
\begin{equation}\label{equation:Wbound}
  ||\vec{x}^0-\vec{x}^{\star}||^2 \leq  (2/\mu)   (F(\vec{x}^0)-F(\vec{x}^{\star}) )\leq(2/ \mu)  F(\vec{x}^0)=:D,
\end{equation}
  where the final inequality can be obtained if   $F(\vec{x})\geq 0$ for all $\vec{x}$, which is usually the case in ML.
%
 The function value $F(\vec{x}^0)$ is often easily available, e.g., in logistic regression $F(\vec{x}^0)= \log(2)$ if $\vec{x}^0=\vec{0}$, see Section~\ref{Sec:NumericalExample}.
  Similarly, if the algorithm projects the iterates to a compact set, which is often done in constrained optimization, then the diameter of that set gives us the bound $D$.
 Moreover,  bounding  $||\vec{x}^1-\vec{x}^0||$ only requires us, at worst, to do some initial coordinations.  
 We note that  c-i) implies c-ii) since no quantization is done at the first iteration so we have
 $$||\vec{x}^0-\vec{x}^{\star}|| \leq ||\vec{x}^0 - \vec{x}^1|| +  ||\vec{x}^{1}-\vec{x}^{\star}||  \leq ||\vec{x}^0 - \vec{x}^1|| + \sigma ||\vec{x}^{0}-\vec{x}^{\star}||$$
 implying that $||\vec{x}^0-\vec{x}^{\star}|| \leq 1/(1-\sigma) ||\vec{x}^0 - \vec{x}^1||$.

 We obtain the
  following linear convergence rate result for the quantized algorithm. 
   \begin{theorem}\label{MainTheorem}
      Consider the quantized algorithm in Eq.~\eqref{eq:ItAlg3}
      and suppose that Assumption~\ref{assumption:contr} holds.
    Set
       \begin{align} \label{eq:def_r}
  r^k =  \frac{K}{L_A} \alpha(b)^{k+1}  D ~~~\text{ where } ~~~ \alpha(b)=\frac{K}{2^{b}-1}+\sigma \\
    \text{ and }~~K=\max \left\{1,\frac{2L_A L_C}{\sigma}\right\}.~~~~~~~~~ \label{eq:def_K}
\end{align}
 Then the following holds:\\
      \begin{align}
        ||\vec{x}^{k}-\vec{x}^{\star}|| \leq& \alpha(b)^k D, ~~~~~\text{ for all}~~ k\in \N. \label{eq:MainBound1} 
      \end{align}
For any $\epsilon>0$ we have
      \begin{align} \label{eq:epsacc}
         ||\vec{x}^k-\vec{x}^{\star}||\leq& \epsilon~~\text{ for all }~~ k\geq k_{\epsilon}(b):= \frac{1}{1-\alpha(b)} \log\left( \frac{D}{\epsilon}\right).
     \end{align}
%
 \end{theorem}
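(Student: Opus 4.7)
My plan is to prove both bounds by a coupled induction on $k$ that simultaneously tracks (i) the iterate error $\|\vec{x}^k-\vec{x}^{\star}\|\leq \alpha(b)^k D$ and (ii) the validity of the current quantization, namely $\|\vec{c}_i^{k+1}-\vec{q}_i^{k}\|_{\infty}\leq r^k$, so that Lemma~\ref{lemma:quant} applies and yields $\|\vec{q}_i^{k+1}-\vec{c}_i^{k+1}\|_{\infty}\leq r^k/(2^b-1)$. The base case $k=0$ is immediate: Assumption~\ref{assumption:contr}(c-ii) gives $\|\vec{x}^0-\vec{x}^{\star}\|\leq D$, and the initialization $\vec{q}_i^0=\vec{c}_i^0$ makes the quantization error identically zero.

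For the inductive step I would first derive the one-step recursion on the iterate error. Splitting
\begin{align*}
\vec{x}^{k+1}-\vec{x}^{\star} = \bigl[A(\vec{q}^{k},\vec{x}^k)-A(\vec{c}^k,\vec{x}^k)\bigr] + \bigl[A(C(\vec{x}^k),\vec{x}^k)-\vec{x}^{\star}\bigr]
\end{align*}
and applying \eqref{eq:AlgLips-1} to the first bracket together with the pseudo-contractivity of $\vec{x}\mapsto A(C(\vec{x}),\vec{x})$ to the second yields $\|\vec{x}^{k+1}-\vec{x}^{\star}\|\leq L_A\|\vec{q}^k-\vec{c}^k\|_{\infty}+\sigma\|\vec{x}^k-\vec{x}^{\star}\|$. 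The inductive hypothesis together with the definition of $r^{k-1}$ bounds the first term by $L_A\cdot r^{k-1}/(2^b-1)=K\alpha(b)^k D/(2^b-1)$, and the second by $\sigma\alpha(b)^k D$; their sum factors as $\alpha(b)^k D \cdot \bigl[\sigma+K/(2^b-1)\bigr]=\alpha(b)^{k+1}D$, which is exactly \eqref{eq:MainBound1} at step $k+1$.

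Second, I would verify the quantization-validity claim. By \eqref{eq:AlgLips-2} and the triangle inequality,
\begin{align*}
\|\vec{c}_i^{k+1}-\vec{q}_i^{k}\|_{\infty}\leq L_C\|\vec{x}^{k+1}-\vec{x}^{k}\|+\|\vec{c}_i^{k}-\vec{q}_i^{k}\|_{\infty}.
\end{align*}
Upper bounding $\|\vec{x}^{k+1}-\vec{x}^k\|\leq(1+\alpha(b))\alpha(b)^k D\leq 2\alpha(b)^k D$ via the triangle inequality and the already-established part (i), and using $\|\vec{c}_i^k-\vec{q}_i^k\|_{\infty}\leq r^{k-1}/(2^b-1)$, the requirement $\|\vec{c}_i^{k+1}-\vec{q}_i^k\|_{\infty}\leq r^k=(K/L_A)\alpha(b)^{k+1}D$ reduces, after dividing by $\alpha(b)^k D/L_A$, to the purely algebraic condition
\begin{align*}
2L_AL_C+\frac{K}{2^{b}-1}\leq K\alpha(b)=K\sigma+\frac{K^2}{2^{b}-1},
\end{align*}
i.e., $2L_AL_C\leq K\sigma+K(K-1)/(2^b-1)$. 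Checking the two branches of $K=\max\{1,2L_AL_C/\sigma\}$ in \eqref{eq:def_K} separately shows this always holds, which explains the exact form chosen for $K$. The $\epsilon$-complexity bound \eqref{eq:epsacc} then follows from \eqref{eq:MainBound1} by solving $\alpha(b)^k D\leq \epsilon$ and using the elementary inequality $\log(1/\alpha(b))\geq 1-\alpha(b)$ valid for $\alpha(b)\in(0,1)$.

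The delicate point, more than any individual estimate, is the coupling between the two inductive claims: the recursion on $\|\vec{x}^k-\vec{x}^{\star}\|$ needs validity of the quantization at step $k-1$, while validity at step $k$ needs the iterate bound at step $k$. This mutual dependence is what forces the constants to be self-consistent, and it is precisely the algebraic condition arising from the validity check that dictates the admissible values of $K$ and hence the form of $\alpha(b)$ in \eqref{eq:def_r}. Keeping this bookkeeping tight, rather than any single inequality, is the main work of the proof.
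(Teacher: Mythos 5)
Your proposal is correct and follows essentially the same argument as the paper: the same coupled induction on the iterate error and the quantization-validity condition, the same splitting $A(\vec{q}^k,\vec{x}^k)-A(\vec{c}^k,\vec{x}^k)$ plus the pseudo-contraction term, the same invocation of Lemma~\ref{lemma:quant}, and the same $1-\alpha(b)\leq\log(1/\alpha(b))$ step for \eqref{eq:epsacc}. The only cosmetic difference is that you isolate the role of $K$ as a single algebraic condition $2L_AL_C\leq K\sigma+K(K-1)/(2^b-1)$, whereas the paper verifies it by chaining inequalities term by term.
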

\begin{proof}
 A proof is presented in Appendix~\ref{App:Proof_of_main1}.
\end{proof}

Theorem~\ref{MainTheorem} states that we can maintain a linear convergence of the algorithms in Definition~\ref{Defin:LinearConvergence} with communicating only a fixed number of bits, $bd$, per iteration. 
 This is possible by adaptively squeezing the grid size based on the local geometry of the optimization landscape, using the designed contraction factor $\sigma$, as well as $L_A$ and $L_C$; see Figure~\ref{fig:Quant}.
 Parameter $\alpha(b)$ plays an important role in the performance of our distributed optimization with quantized information exchange. It is the decrease rate of the grid size (see Eq.~\eqref{eq:def_r}), the convergence rate of the limited communication algorithm (see Eq.~\eqref{eq:MainBound1}).
 In both cases, we need $\alpha(b)<1$ to ensure the convergence of the limited communication algorithm. It is easy to show that there is a critical $b=b_c$ for which $\alpha(b_c)<1$. For any $b> b_c$, $\alpha(b)$ converges exponentially to $\sigma$, the convergence rate of the original unquantized algorithm. Part b) of the theorem  provides a bound on the total number of bits needed to find a given solution accuracy, for a given number of bits per iteration $b$. In particular, we need to run up to $k_{\epsilon}(b)$ iterations to obtain $\epsilon$-accurate solution for any arbitrary accuracy $\epsilon >0$.

Besides the number of iterations, we can measure the total number of bits needed to ensure any $\epsilon>0$ accuracy.
\begin{corollary}
  Define $B_{\epsilon}(b)$ as the total number of communicated  bits (per dimension) needed  to ensure an $\epsilon$-solution using the bound in  Eq.~\eqref{eq:epsacc}, provided that we communicate $b$ bits per iteration. We have
\begin{equation}B_{\epsilon}(b)=b k_{\epsilon}(b). \label{eq:BigB_eps} \end{equation}
\end{corollary}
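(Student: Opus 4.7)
The corollary is essentially bookkeeping: multiply bits-per-iteration by iterations-to-accuracy. My plan is to justify each factor separately and then combine them.

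First, I would invoke Theorem~\ref{MainTheorem}, specifically Eq.~\eqref{eq:epsacc}, which guarantees that running the quantized algorithm in Eq.~\eqref{eq:ItAlg3} for $k_{\epsilon}(b)$ iterations yields an iterate $\vec{x}^{k}$ satisfying $\|\vec{x}^k-\vec{x}^{\star}\|\leq \epsilon$. This pins down the number of iterations required by the bound in Eq.~\eqref{eq:epsacc}.

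Next, I would account for the per-iteration communication cost. Inspecting Eq.~\eqref{subeq:ItAlg3c}, the only communication that occurs at each iteration is the transmission of the quantized value $\vec{q}_i^{k+1}=\texttt{quant}_i(\vec{c}_i^{k+1},\vec{q}_i^k,r^k,b)$, which, by construction of the quantization grid described in Section~\ref{sec:LimComALg} and illustrated in Figure~\ref{fig:Quant}, requires exactly $b$ bits per dimension per node (the receiver already possesses $\vec{q}_i^k$ from the previous round, and both sender and receiver can compute $r^k$ from Eq.~\eqref{eq:def_r} since it is a deterministic function of the known parameters). Hence the per-dimension communication cost of one iteration is $b$.

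Finally, multiplying the per-iteration cost $b$ by the iteration count $k_{\epsilon}(b)$ yields the total per-dimension bit budget $B_{\epsilon}(b)=b\,k_{\epsilon}(b)$, which establishes Eq.~\eqref{eq:BigB_eps}. There is no real obstacle here; the statement is a direct accounting consequence of Theorem~\ref{MainTheorem} combined with the fixed per-iteration quantization budget specified in Eq.~\eqref{subeq:ItAlg3c}. The only care needed is to observe that the grid center $\vec{q}_i^k$ and radius $r^k$ do not themselves require additional transmitted bits, since both parties can maintain them in synchrony from the initialization in Assumption~\ref{assumption:contr} and the closed-form recursion.
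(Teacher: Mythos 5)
Your proposal is correct and matches the paper's (implicit) reasoning: the corollary is stated without proof precisely because it is the direct bookkeeping product of the iteration bound $k_{\epsilon}(b)$ from Eq.~\eqref{eq:epsacc} and the fixed $b$ bits per dimension per iteration from the quantizer in Eq.~\eqref{subeq:ItAlg3c}. Your added observation that $\vec{q}_i^k$ and $r^k$ need not be transmitted because both parties maintain them in synchrony is a worthwhile clarification, but it does not change the argument.
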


Note that for given desired solution accuracy $\epsilon>0$, we can easily minimize $B_{\epsilon}(b)$ since it has only one variable $b$.  That is, we can find optimal quantization that ensures convergence using the fewest bits. We explore this further in the numerical experiments in Sections~\ref{section:InterPlay} and~\ref{Sec:NumericalExample}. 

\subsection{
 Decentralized Optimization and Comparison to~\cite{tsitsiklis1987communication}} \label{sec:Comparison}


  We now illustrate how Theorem~\ref{MainTheorem} can be used to provide a dimension dependent upper bound on the
  decentralized optimization algorithms in Section~\ref{sec:ExampleDL}.
  This result generalizes  the upper bound in~\cite{tsitsiklis1987communication}.
  The work in~\cite{tsitsiklis1987communication} considers the constrained optimization problem
\begin{equation} \label{eq:mainOptProb_twonodes}
\begin{aligned}
& \underset{\vec{x}\in \R^d}{\text{minimize}}
& &   f_1(\vec{x})+f_2(\vec{x}) , \\
& \text{subject to} && \vec{x}\in [0,1]^d,
\end{aligned}
\end{equation}
in a network of the two nodes $i=1,2$. The objective functions $\{f_i\}_i$, maintained by node $i$, are strongly-convex and smooth.
The paper studied the following question: \emph{how many bits do the two nodes need to communicate  to reach an $\epsilon$ accurate solution to the optimization problem in Eq.~\eqref{eq:mainOptProb_twonodes}?} 
  To answer this question, the authors of~\cite{tsitsiklis1987communication} first provide a lower bound showing that to find an $\epsilon$-accurate solution, all algorithms need to communicate at least
 \begin{equation} \label{eq:TL_lower_bound_}
      \Omega \left( d \left(  \log(d)+\log\left(\frac{1}{\epsilon} \right)\right) \right)\frac{\text{bits}}{\text{node}}.
 \end{equation}
  The authors also show that an $\epsilon$-accurate solution can be achieved with an algorithm that communicates\footnote{Recall that $\kappa=L/\mu$ is the condition number of the optimization problem. In~\cite{tsitsiklis1987communication} $\kappa$ does not show up in the upper bound. However, a thorough inspection of the proof reveals this dependence on $\kappa$.  }
  \begin{equation}
  \mathcal{O}\left( \log(\kappa d) \kappa d \left(\log(d)+\log\left(\frac{1}{\epsilon} \right) \right) \right) \frac{\text{bits}}{\text{node}}. \label{eq:Oddep}
\end{equation}
  The upper bound is tight, except for factor $\kappa \log(d \kappa)$.
  The algorithm that is used to achieve this upper bound is  a projected  gradient method 
  with a quantization similar to Eq.~\eqref{eq:ItAlg3}.


 Our results can generalize this upper bound to a multiple nodes.
In the following discussion, we assume that each node can communicate to every other node.
 We first consider the unconstrained case of Section~\ref{sec:ExampleDL}. 
 \begin{corollary} \label{Corr:TL1}
  Consider the quantized algorithm in Eq.~\eqref{eq:ItAlg3} with  $A(\cdot)$ and $C_i(\cdot)$ from Eq.~\eqref{eq:DL_star}, $\gamma=2/(N(\mu+L))$, $K$ from Eq.~\eqref{eq:def_K}, $r^k$ from Eq.~\eqref{eq:def_r},  $b= \lceil \log_2(24 (\kappa+1) \sqrt{d}) \rceil$, and $\kappa=L/\mu\geq 2$. 
 For any $\epsilon>0$,  we find $x_{\epsilon}$ such that $||x_{\epsilon}-x^{\star}||\leq \epsilon$ after the algorithm has communicated\footnote{The condition number $\kappa$ my be replaced by any upper bound on $\kappa$. Hence, the assumption $\kappa=L/\mu\geq 2$ does not restrict the results, in this case we may replace $\kappa$ with its upper bound $2$. } 
   $$\mathcal{O}\left( \log \left(\kappa d \right) \kappa d \left(\log \left(D\right)+\log \left(\frac{1}{\epsilon}\right)\right)  \right)\frac{\text{ bits}}{\text{nodes}}.$$
 \end{corollary}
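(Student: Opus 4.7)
The plan is to apply Theorem~\ref{MainTheorem} directly to the decentralized-learning instance from Section~\ref{sec:ExampleDL} with the constants supplied by Proposition~\ref{Theorem:Grad1}, then carefully unwind what the choice $b = \lceil \log_2(24(\kappa+1)\sqrt{d})\rceil$ buys us in $\alpha(b)$ and hence in $k_{\epsilon}(b)$. The total bit count per node is the number of iterations times $b$ bits per scalar times $d$ scalars per communication round, so the whole argument reduces to bounding a single product.

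First I would plug in $\sigma = 1 - 2/(\kappa+1)$, $L_A = 2\sqrt{d}/(\mu+L)$, and $L_C = L$ from Proposition~\ref{Theorem:Grad1}. A direct calculation of $K = \max\{1, 2 L_A L_C/\sigma\}$ gives $K = \Theta(\sqrt{d})$; more precisely, using $L/(\mu+L) \le 1$ and $\sigma \ge 1/3$ for $\kappa \ge 2$, one obtains $K \le 12\sqrt{d}$, and I would track a constant $c$ so that $K \le c\sqrt{d}$.

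Next, the choice $b = \lceil \log_2(24(\kappa+1)\sqrt{d}) \rceil$ ensures $2^b - 1 \ge 12 (\kappa+1)\sqrt{d}$ (say), so that
\begin{equation*}
  \frac{K}{2^b - 1} \;\le\; \frac{c\sqrt{d}}{12(\kappa+1)\sqrt{d}} \;\le\; \frac{1}{\kappa+1}.
\end{equation*}
Combined with $\sigma = 1 - 2/(\kappa+1)$ this yields $\alpha(b) \le 1 - 1/(\kappa+1)$, and hence $1/(1-\alpha(b)) \le \kappa+1$. Theorem~\ref{MainTheorem}-Eq.~\eqref{eq:epsacc} then gives an $\epsilon$-solution after at most
\begin{equation*}
  k_{\epsilon}(b) \;\le\; (\kappa+1)\,\log(D/\epsilon)
\end{equation*}
iterations. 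At each iteration every node communicates $bd$ bits (one $b$-bit quantized scalar per coordinate of its local gradient $\vec{c}_i^k$), so the total bits per node is
\begin{equation*}
  d\,b\,k_{\epsilon}(b) \;=\; \mathcal{O}\!\bigl(\log(\kappa d)\bigr)\cdot \mathcal{O}(\kappa)\cdot d \cdot \log(D/\epsilon)
  \;=\; \mathcal{O}\!\Bigl(\log(\kappa d)\,\kappa d\bigl(\log D + \log(1/\epsilon)\bigr)\Bigr),
\end{equation*}
which is the claimed bound.

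The only nontrivial step is the algebraic balancing in the second paragraph: one must check that $b = \lceil \log_2(24(\kappa+1)\sqrt{d})\rceil$ is \emph{just large enough} so that the quantization-induced term $K/(2^b-1)$ is dominated by the contraction gap $1-\sigma = 2/(\kappa+1)$, without being so large that the $\log_2$ overhead blows up the final product. I would double-check the constants $24$ and the case $\kappa\ge 2$, and mention that enlarging $\kappa$ to any known upper bound is harmless since only the $\mathcal{O}$-scaling matters. Everything else is substitution into Theorem~\ref{MainTheorem} and Eq.~\eqref{eq:BigB_eps}.
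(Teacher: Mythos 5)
Your proposal is correct and follows essentially the same route as the paper's proof: plug the constants $\sigma=(\kappa-1)/(\kappa+1)$, $L_A=2\sqrt{d}/(\mu+L)$, $L_C=L$ from Proposition~\ref{Theorem:Grad1} into $K\leq 12\sqrt{d}$, use the choice of $b$ to get $1/(1-\alpha(b))\leq \kappa+1$, and multiply $b\cdot d\cdot k_{\epsilon}(b)$. The only cosmetic difference is that you bound $K/(2^b-1)$ directly by $1/(\kappa+1)$ while the paper first uses $2^b-1\geq 2^{b-1}$; both yield the same estimate.
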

\begin{proof}
   A proof is presented in Appendix~\ref{App:CorrProofs}.
\end{proof}
This corollary shows that the quantized version of the unconstrained decentralized gradient method archives a similar upper bound as the projected gradient method in~\cite{tsitsiklis1987communication} (see Eq.~\eqref{eq:Oddep}), even for $N>2$.
  The only difference  is that the $\log(d)$ factor in Eq.~\eqref{eq:Oddep} has been replaced with $\log(D)$.
  The bound $D$ does not always depend on the dimension $d$, e.g., the upper bound in Eq.~\eqref{equation:Wbound}.  

We can generalize the two-node optimization algorithm~\cite{tsitsiklis1987communication} to a network as
\begin{equation} \label{eq:mainOptProb_multinodes}
\begin{aligned}
& \underset{\vec{x}\in \R^d}{\text{minimize}}
& &  \sum_{i=1}^n  f_i(\vec{x}) \:, \\
& \text{subject to} && \vec{x}\in [0,1]^d.
\end{aligned}
\end{equation}
The optimization algorithm we use is projected gradient method, which we write in our  algorithm framework by  setting
   \begin{equation}
   \label{eq:DL_starTL}
   \begin{aligned}
    A(\vec{c},\vec{x})= \left\lceil \vec{x}- \gamma\sum_{i=1}^N \vec{c}_i \right\rceil_{[0,1]^d} \text{ and }C_i(\vec{x})=  \nabla f_i(\vec{x})
  \end{aligned}
  \end{equation}
  for  $i=1,\ldots,N$.
  Note that for any $\vec{x}^0\in [0,1]^d$ we satisfy Assumption~\ref{assumption:contr} with $D=\sqrt{d}$, since $||\vec{z}-\vec{y}||_2\leq\sqrt{d}$ for any $\vec{z},\vec{y}\in[0,1]^d$.
  The quantized version of this algorithm leads to the following result.
 \begin{corollary}  \label{Corr:TL2}
 Consider the quantized algorithm in Eq.~\eqref{eq:ItAlg3} with  $A(\cdot)$ and $C_i(\cdot)$ from Eq.~\eqref{eq:DL_star}, $K$ from Eq.~\eqref{eq:def_K}, $r^k$ from Eq.~\eqref{eq:def_r}, $\gamma=1/(NL)$, $b= \lceil \log_2(24 (\kappa+1) \sqrt{d}) \rceil$, and $\kappa=L/\mu\geq 2$.
 For any $\epsilon>0$,  we find $\vec{x}_{\epsilon}$ such that $\|\vec{x}_{\epsilon}-\vec{x}^{\star}\|\leq \epsilon$ after the algorithm has communicated
 \begin{equation*}
 \mathcal{O}\left( \log(\kappa d) \kappa d \left(\log(d)+\log\left(\frac{1}{\epsilon} \right) \right) \right) \frac{\text{bits}}{\text{node}}.
 \end{equation*}
 \end{corollary}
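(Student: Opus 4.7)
The plan is to verify that the projected gradient update in Eq.~\eqref{eq:DL_starTL} fits our framework as a $\sigma$-linearly convergent algorithm in the sense of Definition~\ref{Defin:LinearConvergence}, and then instantiate Theorem~\ref{MainTheorem} exactly as we did for the unconstrained case in Corollary~\ref{Corr:TL1}. Recall that $\sum_i f_i$ is $N\mu$-strongly convex and $NL$-smooth under Assumption~\ref{assumption:smooth}. With $\gamma=1/(NL)$, the unprojected map $\vec{x}\mapsto \vec{x}-\gamma\sum_i\nabla f_i(\vec{x})$ is $\sigma$-contractive with $\sigma = 1-1/\kappa$ in the Euclidean norm by the standard gradient descent analysis. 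Since $[0,1]^d$ is convex, the Euclidean projection $\lceil\cdot\rceil_{[0,1]^d}$ is nonexpansive and fixes the constrained optimum $\vec{x}^\star\in[0,1]^d$, so composing it with the gradient step preserves the same contraction factor $\sigma = 1-1/\kappa$. This gives Definition~\ref{Defin:LinearConvergence}-a).

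For Definition~\ref{Defin:LinearConvergence}-b), the nonexpansiveness of the projection again gives $\|A(\vec{c}_1,\vec{x})-A(\vec{c}_2,\vec{x})\|_2\leq \gamma\bigl\|\sum_i(\vec{c}_{1,i}-\vec{c}_{2,i})\bigr\|_2\leq \gamma N\sqrt{d}\,\|\vec{c}_1-\vec{c}_2\|_\infty$, so $L_A=\gamma N\sqrt{d}=\sqrt{d}/L$, and $L_C=L$ as in Proposition~\ref{Theorem:Grad1} since $C_i$ is unchanged. Assumption~\ref{assumption:contr} is met with $D=\sqrt{d}$ because $\mathrm{diam}([0,1]^d)=\sqrt{d}$ in the Euclidean norm and the iterates stay in $[0,1]^d$. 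Plugging $L_A,L_C,\sigma$ into the definition $K=\max\{1,2L_AL_C/\sigma\}$ yields $K=\mathcal{O}(\sqrt{d}\,\kappa)$ for $\kappa\geq 2$.

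Next I would verify that the choice $b=\lceil\log_2(24(\kappa+1)\sqrt{d})\rceil$ ensures the designed contraction rate $\alpha(b)=K/(2^b-1)+\sigma$ is strictly below $1$ and that $1-\alpha(b)=\Theta(1/\kappa)$, exactly as in the proof of Corollary~\ref{Corr:TL1}: for this $b$ we have $K/(2^b-1)\leq c/\kappa$ for a small constant $c$, so $\alpha(b)\leq 1-(1-c)/\kappa$, giving $1/(1-\alpha(b))=\mathcal{O}(\kappa)$. Theorem~\ref{MainTheorem} then yields an $\epsilon$-accurate solution after
\[
k_\epsilon(b)=\frac{1}{1-\alpha(b)}\log\!\left(\frac{D}{\epsilon}\right)=\mathcal{O}\!\left(\kappa\bigl(\log(d)+\log(1/\epsilon)\bigr)\right)
\]
iterations, using $D=\sqrt{d}$ so that $\log D=\tfrac12\log d$.

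Finally, each iteration each node transmits $bd$ bits, so the total number of communicated bits per node is
\[
bd\cdot k_\epsilon(b)=\mathcal{O}\!\left(d\log(\kappa d)\right)\cdot \mathcal{O}\!\left(\kappa\bigl(\log(d)+\log(1/\epsilon)\bigr)\right)=\mathcal{O}\!\left(\log(\kappa d)\,\kappa d\,\bigl(\log(d)+\log(1/\epsilon)\bigr)\right),
\]
which is the claimed bound. The main obstacle, and the only place where this proof differs substantively from Corollary~\ref{Corr:TL1}, is justifying that the constrained projected gradient map preserves the exact same contraction constant and Lipschitz constants as its unconstrained counterpart; this rests entirely on the standard nonexpansiveness of Euclidean projection onto a convex set together with $\vec{x}^\star\in[0,1]^d$. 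Everything after that is a direct bookkeeping application of Theorem~\ref{MainTheorem}.
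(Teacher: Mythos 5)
Your proposal follows essentially the same route as the paper's proof: verify that the projected gradient map is $\sigma$-linear with $\sigma = 1-\Theta(1/\kappa)$, read off $L_A=\sqrt{d}/L$ and $L_C=L$, take $D=\sqrt{d}$ from the diameter of $[0,1]^d$, and then repeat the bookkeeping of Corollary~\ref{Corr:TL1} through Theorem~\ref{MainTheorem}. The only stylistic difference is that the paper simply cites~\cite{nesterov2013introductory} for $\sigma=\sqrt{1-1/\kappa}\leq 1-1/(2\kappa)$, whereas you spell out (correctly, and it is worth stating) that the Euclidean projection onto the convex set $[0,1]^d$ is nonexpansive and that $\vec{x}^{\star}$ is a fixed point of the composed map, so the projection does not degrade the contraction factor of the unprojected step.

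One slip to fix: you assert $K=\mathcal{O}(\sqrt{d}\,\kappa)$, but with your own $\sigma = 1-1/\kappa \geq 1/2$ for $\kappa\geq 2$ the definition gives $K = 2L_AL_C/\sigma \leq 4\sqrt{d}$, i.e.\ $K=\mathcal{O}(\sqrt{d})$ with no $\kappa$ dependence (the paper's proof gets $K\leq 2\sqrt{2d}$). This matters for the very next step: since $2^b-1 = \Theta(\kappa\sqrt{d})$, the inequality $K/(2^b-1)\leq c/\kappa$ that you need to conclude $1-\alpha(b)=\Theta(1/\kappa)$ only follows from $K=\mathcal{O}(\sqrt{d})$; if $K$ genuinely scaled like $\sqrt{d}\,\kappa$ you would only obtain $K/(2^b-1)=\mathcal{O}(1)$ and the iteration bound $k_{\epsilon}(b)=\mathcal{O}(\kappa(\log d+\log(1/\epsilon)))$ would not follow. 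Since the tighter bound on $K$ is immediate from the constants you already computed, this is a one-line repair rather than a genuine gap.
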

\begin{proof} A proof is presented in Appendix~\ref{App:CorrProofs}.
\end{proof}
This corollary illustrates that we can use  the results developed in this paper to extend the upper bound in~\cite{tsitsiklis1987communication} to multi-nodes. 
 The upper bound almost  matches the lower in Eq.~\eqref{eq:TL_lower_bound_}, within a $\log(\kappa d) \kappa$ factor.
 %
 %
  The linear  dependence on the condition number $\kappa$ is an artifact of the gradient method and cannot be improved unless we use another optimization algorithm, e.g., accelerated gradient method or a conjugate gradient method.
  Improving  the  $\log(\kappa d)$ factor might be possible. 

\section{Transmission-time Complexity} \label{Sec:TTC}
Although quantization (lossy compression) saves communication resources, it may fail in characterizing
the actual cost of running distributed  algorithms.
 This is because, from the engineering perspective, the tangible costs are usually the latency (time needed to run the algorithm) or energy consumption. 
  In the following, we focus on the latency and characterize the transmission-time convergence of running the distributed algorithms from the previous section. We introduce the  transmission time model that we use in subsection~\ref{section:Time-Model}. We provide  convergence rate results in terms of transmission time with and without communication errors in subsections~\ref{section:Time-Error-Free} and~\ref{section:Time-Faulty}, respectively.

\subsection{Transmission-Time Model} \label{section:Time-Model}

 To perform an iteration of the quantized algorithms in Eq.~\eqref{eq:ItAlg3} each node needs to communicate a package of $$n:=bd+\theta \text{ bits.}$$
  In particular,  $db$ bits are used to transmit  the quantized message $\vec{q}_i^k$, $b$ per dimension.
  The remaining $\theta$ bits model all protocol overheads, including packet headers (source and destination IDs), parity bits for channel coding, and scheduling overheads~\cite{Jiang2019LowLatency}. These overheads, among other purposes, are added to realize reliable communications over an unreliable communication link and to regulate the data transmissions of multiple nodes.
  We discuss the overhead in more detail in Section~\ref{section:InterPlay}.

   To characterize the transmission time of running the algorithms we need to define the transmission rate.
   To simplify the discussion we let the transmission rate be the same for each communication link.\footnote{This is without loss of generality, since we only need to consider the transmission rate of slowest communication link. } 
 The \emph{transmission rate} is denoted by the function
     $$R(n,p) =\frac{\text{communicated bits}}{\text{second}},$$
  where $p\in(0,1)$ is the probability that each $n$-bit communication package fails in transmission.
  We discuss specific forms of the rate function $R(n,p)$ in Section~\ref{section:InterPlay}.
To quantify the transmission time per iteration,
we  introduce the \emph{delay function} $\Delta (n,p)$.
In particular,  the time needed to perform 1 iteration of the algorithm  is denoted by 
\begin{equation}\label{EQ:Delta1}
\Delta (n,p) := \frac{n}{R(n,p)} ~~\text{seconds}
\end{equation}
and  performing $k$ iterations takes
\begin{equation} t := k\Delta(n,p)=\frac{kn}{R(n,p)} ~~\text{seconds} .   \label{EQ:Time1} \end{equation}
 If we let  $ \vec{x}_i(t)$ be the decision of node $i$ after $t$ transmission seconds then
%
 \begin{align}
 \vec{x}_i(t)=& \vec{x}_i^k,~~~\text{ for } ~~~t\in \big[k\Delta(n,p),(k+1)\Delta(n,p)\big), \\
 \vec{x}(t)=&(\vec{x}_1(t),\ldots,\vec{x}_N(t)).
\end{align}
We now explore the algorithm convergence in terms of transmission time.

\subsection{Error-free Transmission} \label{section:Time-Error-Free}
We start by considering an error-free communication, i.e., when $p=0$.
In this case, we drop the variable $p$ from the notation $R(\cdot)$ and $\Delta(\cdot)$.
 We get the following  transmission-time convergence.

 \begin{theorem}\label{MainTheorem-time} 
  Consider the quantized algorithm in Eq.~\eqref{eq:ItAlg3}.   Suppose that  Assumption~\ref{assumption:contr} holds and that the parameters $r^k$, $\alpha(b)$, and $K$ are chosen as in Eq.~\eqref{eq:def_r} and Eq.~\eqref{eq:def_K} in Theorem~\ref{MainTheorem}.
Then we have the following convergence in terms of transmission time $t$
      \begin{align} \label{eq:MainBound2-Time}
                  ||\vec{x}(t)-\vec{x}^{\star}|| \leq& \left(  \left[\frac{K}{2^b-1}+ \sigma  \right]^{R(n)/n} \right)^{t-\Delta(n)} D.
      \end{align}
      Moreover, for any $\epsilon>0$ following holds:
      \begin{align} \label{eq:eq:mainBound2-time2}
          ||\vec{x}(t)-\vec{x}^{\star}||\leq& \epsilon~~\text{ for all }~~ t\geq T_{\epsilon}(b,\theta),
     \end{align}
  where 
\begin{align} \label{eq:TimeTeps}
   T_{\epsilon}(b,\theta) :=&~ k_{\epsilon}(b)  \Delta(b+\theta) ~~\text{seconds},
\end{align}
 with $k_{\epsilon}(b)$  defined in Eq.~\eqref{eq:epsacc}.
 \end{theorem}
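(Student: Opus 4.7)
The plan is to obtain Theorem~\ref{MainTheorem-time} as an essentially mechanical translation of Theorem~\ref{MainTheorem} from the iteration domain to the time domain, using the identity $t = k\Delta(n)$ from Eq.~\eqref{EQ:Time1} to convert iteration counts into seconds. The only step that requires a bit of care is accounting for the piecewise-constant nature of $\vec{x}(t)$, i.e., that $\vec{x}(t) = \vec{x}^k$ for $t \in [k\Delta(n), (k+1)\Delta(n))$.

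First, I would observe that for any transmission time $t \geq 0$, the corresponding iteration index satisfies $k = \lfloor t/\Delta(n) \rfloor$, so $k \geq t/\Delta(n) - 1 = (t-\Delta(n))/\Delta(n)$. Combined with the per-iteration bound Eq.~\eqref{eq:MainBound1} from Theorem~\ref{MainTheorem}, namely $\|\vec{x}^k - \vec{x}^\star\| \leq \alpha(b)^k D$, and using that $\alpha(b) \in (0,1)$ so that $\alpha(b)^{(\cdot)}$ is decreasing, I would conclude
$$\|\vec{x}(t) - \vec{x}^\star\| \leq \alpha(b)^{(t-\Delta(n))/\Delta(n)} D.$$
Then, using $\Delta(n) = n/R(n)$ from Eq.~\eqref{EQ:Delta1}, we have $1/\Delta(n) = R(n)/n$, so the exponent rearranges as
$$\alpha(b)^{(t-\Delta(n))/\Delta(n)} = \bigl( \alpha(b)^{R(n)/n} \bigr)^{t-\Delta(n)}.$$
Substituting the explicit form $\alpha(b) = K/(2^b-1) + \sigma$ from Eq.~\eqref{eq:def_r} yields Eq.~\eqref{eq:MainBound2-Time}.

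For the second statement in Eq.~\eqref{eq:eq:mainBound2-time2}, I would argue directly rather than by inverting Eq.~\eqref{eq:MainBound2-Time}. Theorem~\ref{MainTheorem}, Eq.~\eqref{eq:epsacc} guarantees $\|\vec{x}^k - \vec{x}^\star\| \leq \epsilon$ for all $k \geq k_\epsilon(b)$. Since each iteration costs exactly $\Delta(n)$ seconds of transmission, iteration index $k_\epsilon(b)$ is completed by time $k_\epsilon(b)\Delta(n)$. Hence for every $t \geq k_\epsilon(b)\Delta(n) = T_\epsilon(b,\theta)$, the piecewise-constant definition gives $\vec{x}(t) = \vec{x}^k$ for some $k \geq k_\epsilon(b)$, and the bound follows.

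The only potential obstacle is purely notational: the theorem writes $\Delta(b+\theta)$ in the definition of $T_\epsilon$, but the packet size introduced earlier is $n = bd+\theta$; in the write-up I would either flag this as a typo or simply use $n=bd+\theta$ consistently, since the proof is identical either way. Otherwise, no nontrivial inequality or convergence argument is needed beyond what is already supplied by Theorem~\ref{MainTheorem} and the definitions of $R(n)$ and $\Delta(n)$.
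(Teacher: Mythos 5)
Your proposal is correct and follows essentially the same route as the paper, whose entire proof is the one-line remark that the result follows by substituting $t=k\Delta(n)$ from Eq.~\eqref{EQ:Time1} into Theorem~\ref{MainTheorem}; you simply make explicit the floor-function bookkeeping $k=\lfloor t/\Delta(n)\rfloor\geq (t-\Delta(n))/\Delta(n)$ and the identity $1/\Delta(n)=R(n)/n$ that produce the exponent in Eq.~\eqref{eq:MainBound2-Time}. Your observation about the $\Delta(b+\theta)$ versus $n=bd+\theta$ notation is also well taken.
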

\begin{proof}
 This result is obtained by using Eq.~\eqref{EQ:Time1} in Theorem~\ref{MainTheorem}.
\end{proof}
 Eq.~\eqref{eq:MainBound2-Time} provides us with an upper bound on the convergence of $\vec{x}(t)$ to fixed point $\vec{x}^{\star}$ in terms of transmission time.
  The algorithm converges linearly in terms of transmission time with  the rate
  \begin{align} \label{eq:Rate}
  \rho(b,\theta) =  \left[\frac{K}{2^b-1}+\sigma  \right]^{R(db+\theta)/(db+\theta)}.
 \end{align}
  Unlike the convergence rate in Eq.~\eqref{eq:MainBound1} of Theorem~\ref{MainTheorem},
  a finer quantization (larger $b$) does not necessarily ensure a faster convergence rate anymore.
     This is because the convergence rate $\rho(b,\theta)$ for the transmission time captures the  tradeoff between spending more time to send large packets with more information (larger $b$) at the cost of doing fewer iterations per second  or spending less time per iteration (by sending a packet with less information) but achieving  more iterations per seconds.
  Even though $\rho(b,\theta)$ seems complicated, it is easily optimized numerically since it is a function of only 2 variables. 

  Eq.~\eqref{eq:TimeTeps}  is an upper bound on the transmission time (in seconds) needed to reach an $\epsilon$-accurate solution.  In comparison to  Theorem~\ref{MainTheorem}, the transmission-time needed to find an $\epsilon$-optimal solution is the iteration count $k_{\epsilon}(b)$ multiplied by the nonlinear delay function $\Delta(n)$.
  We will see in  Section~\ref{section:InterPlay} how the nonlinearities in $\Delta(n)$ affect the transmission rate.

 \subsection{Faulty Transmission} \label{section:Time-Faulty}

 We now consider the transmission time convergence in the presence of  message failures, i.e., $p>0$. 
In particular, each transmitted message from node $i$ to node $j$ fails with probability $p$. We assume that the success of the transmissions are independent events across time and among different transmitter-receiver pairs. 
%

 To perform the algorithm in Eq.~\eqref{eq:ItAlg3}  a successful communication of each message $\vec{q}_i^k$ is required.
 In case of messages failures, the nodes need to re-transmit their messages until success.
 We let $m_k$ denote the number of  re-transmissions of messages at iteration $k$. 
 %
 This means  that the communication time of one  iteration (cf. Eq.~\eqref{EQ:Delta1} and Eq.~\eqref{EQ:Time1}) is $m_i\Delta (n,p)$ seconds
and  performing $k$ iterations takes
\begin{equation} t :=  \Delta(n,p)\sum_{i=0}^k m_i =\frac{n}{R(n,p)} \sum_{i=1}^k m_i  ~~\text{seconds} .   \label{EQ:Time2} \end{equation}
 We consider two chooses of $m_i$:
\begin{enumerate}[1)]
  \item \textbf{Communicate until success:} $m_i$ is the number of communication rounds until every message has bee succesfully received.
  \item \textbf{Constant communication rounds:}  $m_i=m$, where  $m$  is a predefined constant independent of $i$.
\end{enumerate}
 The communication scheme 1) is easily realizable  in  decentralized and parallel computing where a master node handles the coordination, like in the application in Section~\ref{sec:ExampleDL}.
 However, in fully distributed algorithms in networks it is unrealistic that a single node can know when every communication has been successfully received.
 In this case, we can go for alternative 2) and let each node communicate a fixed number of times.
 We now study these two cases separately.


\subsubsection{Communicate until success}
   From our results in Section~\ref{sec:MainRes} and Theorem~\ref{MainTheorem},
   to find an $\epsilon>0$ accurate solution with $b$-bit resolution per dimension we need $k_{\epsilon}(b)$ iterations.
   In terms of transmission time that translates to
  \begin{align} \label{eq:time-US}
     T_\epsilon(b,\theta,p)= \Delta(n,p) \sum_{k=1}^{k_{\epsilon}(b)}  m_k  \text{ seconds.}
  \end{align}
  Here $m_k$ are  realizations of the random variable $M$ indicating the number of communication rounds needed until success at iteration $k$.
  To formally define $M$ we first define the set $\mathcal{L}$ of all communication links (sender-receiver pairs) used in the algorithm.
  Then
   \begin{align} \label{eq:mk_RV}
      M=\max_{l\in \mathcal{L}}~ S_l
   \end{align}
  where $S_l$ is a random variable indicating the number of times the communication in link $l$ is transmitted until it is successfully received.
 It is easily verified that $S_l$ follows a geometric distribution and that
    $P\left[ S_l =m\right]=p^{m-1}(1-p)$ and 
    $P\left[S_l\leq m\right]=1-p^{m}$.
  With this in mind,  we can derive the following bound on the expected transmission-time needed to ensure $\epsilon$-accuracy.
   \begin{theorem}[Communicate until success] \label{Thm:CUS}
   Consider the quantized algorithm in Eq.~\eqref{eq:ItAlg3}.   Suppose that  Assumption~\ref{assumption:contr} holds and that the parameters $r^k$, $\alpha(b)$, and $K$ are chosen as in Eq.~\eqref{eq:def_r} and Eq.~\eqref{eq:def_K} in Theorem~\ref{MainTheorem}.
   Then for any  $\epsilon>0$ we have
      \begin{align}
          ||\vec{x}(t)-\vec{x}^{\star}||\leq& \epsilon~~\text{ for all }~~ t\geq T_{\epsilon}(b,\theta,p),
     \end{align}
where $T_{\epsilon}(b,\theta,p)$ is the random variable defined in Eq.~\eqref{eq:time-US}.
We have the following bounds on $T_{\epsilon}(b,\theta,p)$
   \begin{align*}
          \texttt{LB}_{\epsilon}(b,\theta,p) \leq& E[T_\epsilon(b,\theta,p)]  \leq  \texttt{UB}_{\epsilon}(b,\theta,p)
   \end{align*}
   where
   \begin{align*}
             \texttt{LB}_{\epsilon}(b,\theta,p) =& k_{\epsilon}(b) \times \Delta(b{+}\theta,p)\times  \frac{\log(|\mathcal{L}|)}{\log(1/p)}   \\
            \texttt{UB}_{\epsilon}(b,\theta,p) =&  k_{\epsilon}(b) \times   \Delta(b{+}\theta,p)\times \left( \frac{1{+} \log(|\mathcal{L}|)}{\log(1/p)}{+}1\right) .
   \end{align*}
\end{theorem}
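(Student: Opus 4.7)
The plan is to decompose the bound into three parts: (i) an iteration-count statement inherited from Theorem~\ref{MainTheorem}, (ii) a probabilistic statement about the number of retransmission rounds per iteration, and (iii) a careful bracketing of the expected maximum of $|\mathcal{L}|$ i.i.d.\ geometric variables. I would first note that Theorem~\ref{MainTheorem} guarantees $\|\vec{x}^k-\vec{x}^\star\|\leq\epsilon$ for every iteration count $k\geq k_\epsilon(b)$ under the chosen $r^k$, $K$, and $\alpha(b)$; this part carries over unchanged to the faulty setting because the quantized recursion is unaffected by retransmissions. In the ``communicate until success'' scheme, iteration $k$ is only completed after $m_k$ rounds, each consuming $\Delta(n,p)$ seconds, so $\vec{x}(t)=\vec{x}^k$ once $t$ exceeds $\Delta(n,p)\sum_{i=1}^{k} m_i$, and in particular $\|\vec{x}(t)-\vec{x}^\star\|\leq\epsilon$ for $t\geq T_\epsilon(b,\theta,p)$. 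By the assumed across-time and across-link independence the $m_k$'s are i.i.d.\ copies of $M=\max_{l\in\mathcal{L}} S_l$, so linearity of expectation gives $E[T_\epsilon(b,\theta,p)]=k_\epsilon(b)\,\Delta(n,p)\,E[M]$, reducing the theorem to the estimation of $E[M]$.

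For $E[M]$, since $P[S_l>m]=p^m$ and the $S_l$ are mutually independent I have $P[M\leq m]=(1-p^m)^{L}$ with $L:=|\mathcal{L}|$, and the tail formula yields
\[
E[M]=\sum_{m=0}^\infty\bigl(1-(1-p^m)^L\bigr).
\]
Setting $f(x):=1-(1-p^x)^L$, a direct derivative computation shows $f$ is strictly decreasing on $(0,\infty)$ with $f(0)=1$, so the standard decreasing-series/integral comparison gives
\[
\int_0^\infty f(x)\,dx \;\leq\; E[M] \;\leq\; 1+\int_0^\infty f(x)\,dx.
\]
I would then evaluate the integral via the substitution $u=p^x$, which turns $dx$ into $-du/(u\log(1/p))$, obtaining
\[
\int_0^\infty f(x)\,dx=\frac{1}{\log(1/p)}\int_0^1 \frac{1-(1-u)^L}{u}\,du=\frac{H_L}{\log(1/p)},
\]
where the last step follows by binomial expansion of $(1-u)^L$ and term-by-term integration, recognising the harmonic number $H_L=\sum_{k=1}^L 1/k$. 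The two-sided bound $\log L\leq H_L\leq 1+\log L$ then gives $\log L/\log(1/p) \leq E[M]\leq 1+(1+\log L)/\log(1/p)$.

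Finally, multiplying this bracket by $k_\epsilon(b)\Delta(n,p)$ reproduces exactly the stated $\texttt{LB}_\epsilon(b,\theta,p)$ and $\texttt{UB}_\epsilon(b,\theta,p)$. The main obstacle is the clean evaluation of the integral as $H_L/\log(1/p)$; this is the only non-routine calculation, and it relies on the substitution $u=p^x$ together with the identity $\int_0^1 (1-(1-u)^L)/u\,du = H_L$, which one can verify either through the binomial/term-by-term integration sketched above or by differentiating $\sum_{k=1}^L (-1)^{k-1}\binom{L}{k}u^k/k$. Everything else is either a direct invocation of Theorem~\ref{MainTheorem} or an elementary independence argument, so essentially the whole technical content of the theorem is concentrated in that single integral identity.
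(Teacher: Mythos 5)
Your proposal is correct and follows essentially the same route as the paper: reduce to $E[M]$ for $M=\max_l S_l$ via the tail-sum formula, bracket the sum by an integral of the decreasing function $1-(1-p^x)^{|\mathcal{L}|}$, evaluate that integral as $H_{|\mathcal{L}|}/\log(1/p)$, and finish with $\log|\mathcal{L}|\leq H_{|\mathcal{L}|}\leq 1+\log|\mathcal{L}|$. The only cosmetic difference is your substitution $u=p^x$ with a binomial expansion, where the paper substitutes $w=1-p^z$ and sums the geometric series $(1-w^{|\mathcal{L}|})/(1-w)=\sum_{i=0}^{|\mathcal{L}|-1}w^i$ — the same integral after $w=1-u$.
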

\begin{proof}   A proof is presented in Appendix~\ref{App:ProofTP}.
%
\end{proof}
 The theorem tells us that the expected transmission time needed to obtain an $\epsilon$-accurate solution is bounded by
%
%
%
 \begin{align*}
    E[T_\epsilon(b,\theta,p)]=  k_{\epsilon}(b)     \times   \Delta(b{+}\theta,p)\times \mathcal{O} \left(  \frac{\log(|\mathcal{L}|)}{\log(1/p)}  \right). 
 \end{align*}
 This bounds consists of three multiplicity terms. The first term $k_{\epsilon}(b)$ comes from the iteration count in Theorem~\ref{MainTheorem}, for the number of iterations needed to find an $\epsilon$-solution.
 The second  term is the nonlinear delay function $\Delta(b{+}\theta,p)$ accounting for the achievable communication rate.
 The final term accounts for the re-transmitting the messages until success.
 This means that the transmission time follows nonlinear relationship between $b$, $\theta$, and $p$. This relationship can easily be optimized numerically if we know the delay function $\Delta(\cdot)$, since there are  only three free variables. We explore this in Section~\ref{section:InterPlay}. {The third term also depends logarithmically on the number of communication links $|\mathcal{L}|$. In a network with $N$ nodes, the number of communication links is bounded as  $|\mathcal{L}|\leq N^2$. This means that $\log(|\mathcal{L}|)\leq  \log(N^2)= 2\log(N)$.}


  \subsubsection{Fixed Number of Communication Rounds}

  Consider now communication scheme 2), where each node broadcast  constant  number of times, i.e. $m_k=m$ is a constant.
  In this case, with some probability, a message will be dropped meaning that the quantized algorithm in Section~\ref{Sec:Maintain} is not guaranteed terminate successfully (we discuss how we can relax this assumption after the theorem).
  However, we can make the probability that the algorithm terminates successfully (finds an $\epsilon$-solution) as high as we want by increasing $m$.
  We illustrate this now.
%
 \begin{theorem}[Fixed number of communication rounds] \label{th4}
 Consider the quantized algorithm in Eq.~\eqref{eq:ItAlg3}.   Suppose that  Assumption~\ref{assumption:contr} holds and that the parameters $r^k$, $\alpha(b)$, and $K$ are chosen as in Eq.~\eqref{eq:def_r} and~\eqref{eq:def_K} in Theorem~\ref{MainTheorem}.
   Let $\epsilon>0$ and $\delta\in [0,1)$. Suppose that each node transmits its message
     $$m_{\epsilon}^{\delta}(b,p)=
  \left\lceil  \frac{|\mathcal{L}|k_{\epsilon}(b)}{(1-\delta)\log(1/p)}  \right\rceil~\frac{\text{times}}{\text{iteration}}.$$
  %
   %
   Then with with probability  $\delta$ following holds:
   \begin{align}
       ||\vec{x}(t)-\vec{x}^{\star}||   \leq& \epsilon ~~~~\text{ for all }~~~~t\geq T_{\epsilon}^{\delta}(b,\theta,p)
   \end{align}
   where 
       $T_{\epsilon}^{\delta}(b,\theta,p)=   k_{\epsilon}(b) \times \Delta(b+\theta,p) \times m_{\epsilon}^{\delta}(b,p).$
\end{theorem}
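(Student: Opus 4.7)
The plan is to reduce the probabilistic statement to a deterministic one by conditioning on the event that every message is successfully delivered within its allotted $m$ transmission attempts. On that event the sequence $\vec{x}^k$ produced by Eq.~\eqref{eq:ItAlg3} is pathwise identical to the noise-free quantized iteration analyzed in Theorem~\ref{MainTheorem}, so by Eq.~\eqref{eq:epsacc} we already have $\|\vec{x}^k-\vec{x}^{\star}\|\leq \epsilon$ for every $k\geq k_{\epsilon}(b)$. All that remains is to lower-bound the probability of this ``no-failure'' event and to translate an iteration count into a transmission time.

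First I would compute the failure probability. A single link fails all $m$ attempts in a given iteration with probability $p^{m}$ (by the independence assumption). Taking a union bound over the $|\mathcal{L}|$ links active in an iteration and then over the $k_{\epsilon}(b)$ iterations needed by Theorem~\ref{MainTheorem}, the probability that at least one message is ever lost is bounded by $|\mathcal{L}|\, k_{\epsilon}(b)\, p^{m}$. Hence with probability at least $1-|\mathcal{L}|\, k_{\epsilon}(b)\, p^{m}$ the iteration is indistinguishable from the error-free case. I would then verify that the choice $m=m_{\epsilon}^{\delta}(b,p)$ in the statement makes this lower bound at least $\delta$: solving $|\mathcal{L}|\, k_{\epsilon}(b)\, p^{m}\leq 1-\delta$ for $m$ yields the condition $m\,\log(1/p)\geq \log\!\bigl(|\mathcal{L}|\, k_{\epsilon}(b)/(1-\delta)\bigr)$, which the prescribed value (being even larger) certainly satisfies.

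Second, I would account for the transmission time. With a constant $m=m_{\epsilon}^{\delta}(b,p)$ retransmissions per iteration, Eq.~\eqref{EQ:Time2} with $m_i\equiv m$ tells us that completing $k_{\epsilon}(b)$ iterations takes $k_{\epsilon}(b)\, m_{\epsilon}^{\delta}(b,p)\, \Delta(b+\theta,p)$ seconds, which is exactly $T_{\epsilon}^{\delta}(b,\theta,p)$. Combining this with the deterministic bound from the first step, on the no-failure event we have $\|\vec{x}(t)-\vec{x}^{\star}\|\leq \epsilon$ for every $t\geq T_{\epsilon}^{\delta}(b,\theta,p)$, and this event has probability at least $\delta$.

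The only subtle point, and the main thing to be careful about, is the union-bound step: one must union over both the $|\mathcal{L}|$ links \emph{and} the $k_{\epsilon}(b)$ iterations, since a single dropped message at any iteration is enough to invalidate the contractive analysis that underlies Theorem~\ref{MainTheorem} (the quantizer state $\vec{q}_i^k$ on the receiver side would diverge from what the sender assumes, breaking the nested-grid argument of Lemma~\ref{lemma:quant}). Once this is done correctly, the rest is the mechanical combination of the error-free iteration complexity $k_{\epsilon}(b)$, the per-attempt delay $\Delta(b+\theta,p)$, and the constant number of attempts $m_{\epsilon}^{\delta}(b,p)$.
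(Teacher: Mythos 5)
Your proposal is correct and follows the same overall skeleton as the paper's proof: condition on the event that every message on every link is delivered within its $m$ allotted attempts during the first $k_{\epsilon}(b)$ iterations, observe that on this event the iterates coincide with those analyzed in Theorem~\ref{MainTheorem}, and convert the iteration count into transmission time via Eq.~\eqref{EQ:Time2} with $m_i\equiv m$. Where you differ is in how the probability of the all-success event is handled. The paper computes it exactly as $(1-p^{m})^{|\mathcal{L}|k_{\epsilon}(b)}$ using independence across links and iterations, solves $(1-p^m)^{|\mathcal{L}|k}\geq\delta$ for $m$ to get the requirement $m\geq \log_p\bigl(1-\delta^{1/(|\mathcal{L}|k)}\bigr)$, and then needs two bounding steps (the inequality $\log(1+y)\geq y/(1+y)$ and a variant of Bernoulli's inequality) to show the prescribed $m_{\epsilon}^{\delta}$ dominates that threshold. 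You instead apply a union bound over the $|\mathcal{L}|$ links and $k_{\epsilon}(b)$ iterations to get the failure probability bound $|\mathcal{L}|\,k_{\epsilon}(b)\,p^{m}$, whose sufficient condition $m\log(1/p)\geq \log\bigl(|\mathcal{L}|k_{\epsilon}(b)/(1-\delta)\bigr)$ is verified by the prescribed $m$ via the elementary fact $x\geq\log x$. Since $(1-p^m)^{|\mathcal{L}|k}\geq 1-|\mathcal{L}|k\,p^m$, your bound is a valid lower bound on the exact success probability, so the argument is sound. What your route buys is simpler algebra and the incidental observation that the theorem's choice of $m_{\epsilon}^{\delta}$ is conservative: the union bound already certifies a logarithmically smaller number of retransmissions. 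What the paper's route buys is the exact success probability, which is slightly tighter information even though it is then relaxed anyway. Your remark about why a single dropped message invalidates the analysis (desynchronization of the quantizer state $\vec{q}_i^k$) is a correct and worthwhile justification that the paper leaves implicit.
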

\begin{proof}  A proof is presented in Appendix~\ref{App:ProofTF}.
\end{proof}
 The theorem provides  the transmission time needed to successfully compute an $\epsilon$-solution with $\delta$ probability. The upper bound is similar to the bound in Theorem~\ref{Thm:CUS}, except the final multiplicity factor has been replaced by $m_{\epsilon}^{\delta}$.
 {In particular, as we increase $\delta$ to $1$, the increase in the transmission time is proportional to $\mathcal{O}(1/(1-\delta))$. The transmission time linearly grows with $|\mathcal{L}|$, meaning that  it can grow quadratically with number of nodes in the worst case when   then communication network is a complete graph (every node communicate to every other node).  
  As before, the convergence rate is easily optimized since it has only few parameters.}

 \begin{remark}
  The algorithms considered in this paper are synchronous meaning that we require every communication to successfully terminate at every iteration.  This is only an artifact of the theoretical analysis, in particular of Theorem~\ref{MainTheorem}. However, we may be able to extend Theorem~\ref{MainTheorem} to handle asynchronous communications with delays at the cost of a slower convergence rate.
%
 %
 In particular, if a message is delayed then we center the communicated grid at the last received $\vec{q}_i$ and use the associated grid. But the grid size should be delicately controlled (intuitively decreasing slower) so that the old grid still captures the new communicated information.  This leads to slower convergence which has been traded-off for more robustness to asynchronous communications. We will leave a detailed implementation of these ideas as future work.
 %
 \end{remark}


 \begin{figure*}[t]
    \centering
        \begin{subfigure}[t]{0.3\textwidth}
        \includegraphics[width=\textwidth]{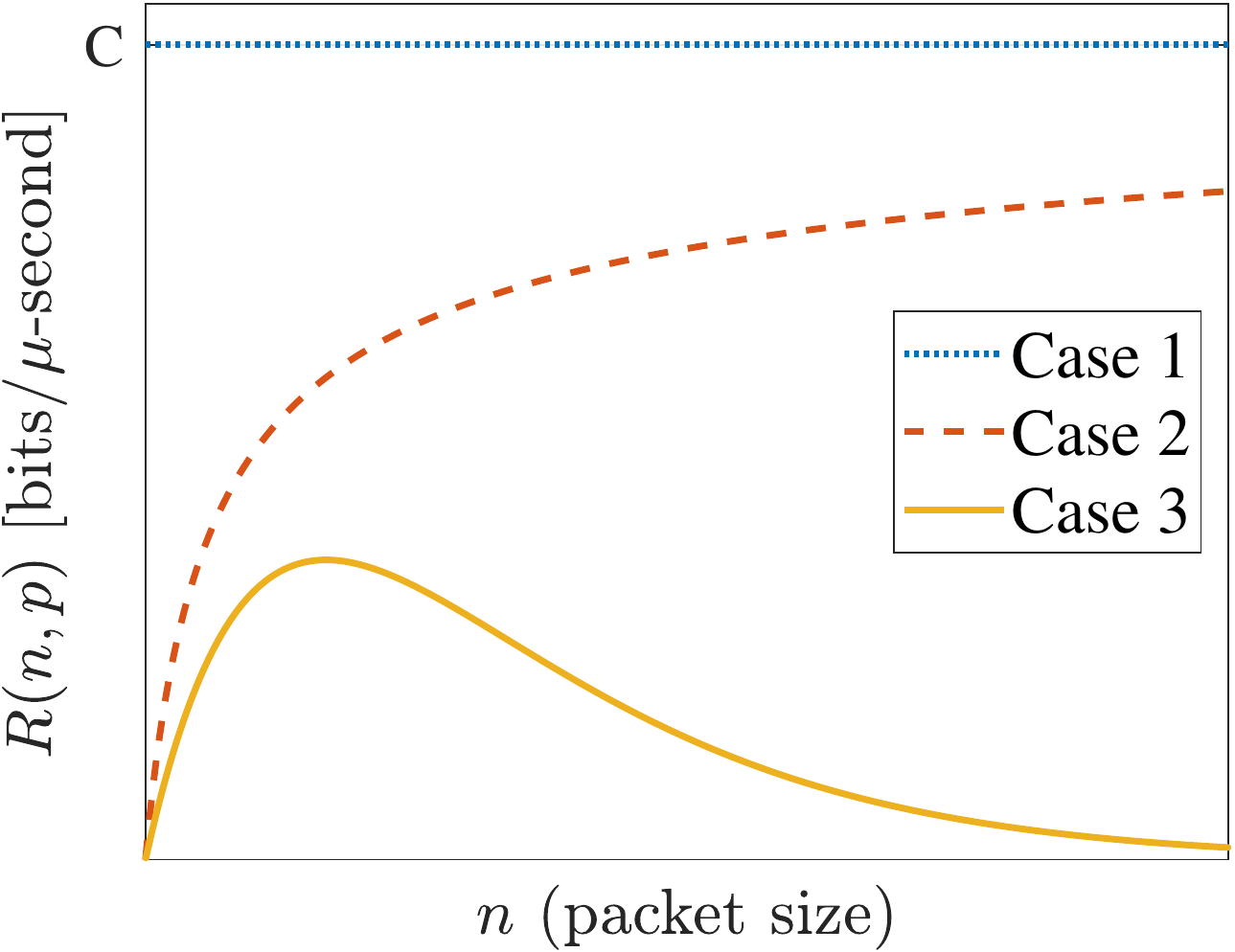}
        \caption{Communication rate vs  package size. }
       \label{fig:rates_cases}
    \end{subfigure}
    \begin{subfigure}[t]{0.3\textwidth}
        \includegraphics[width=\textwidth]{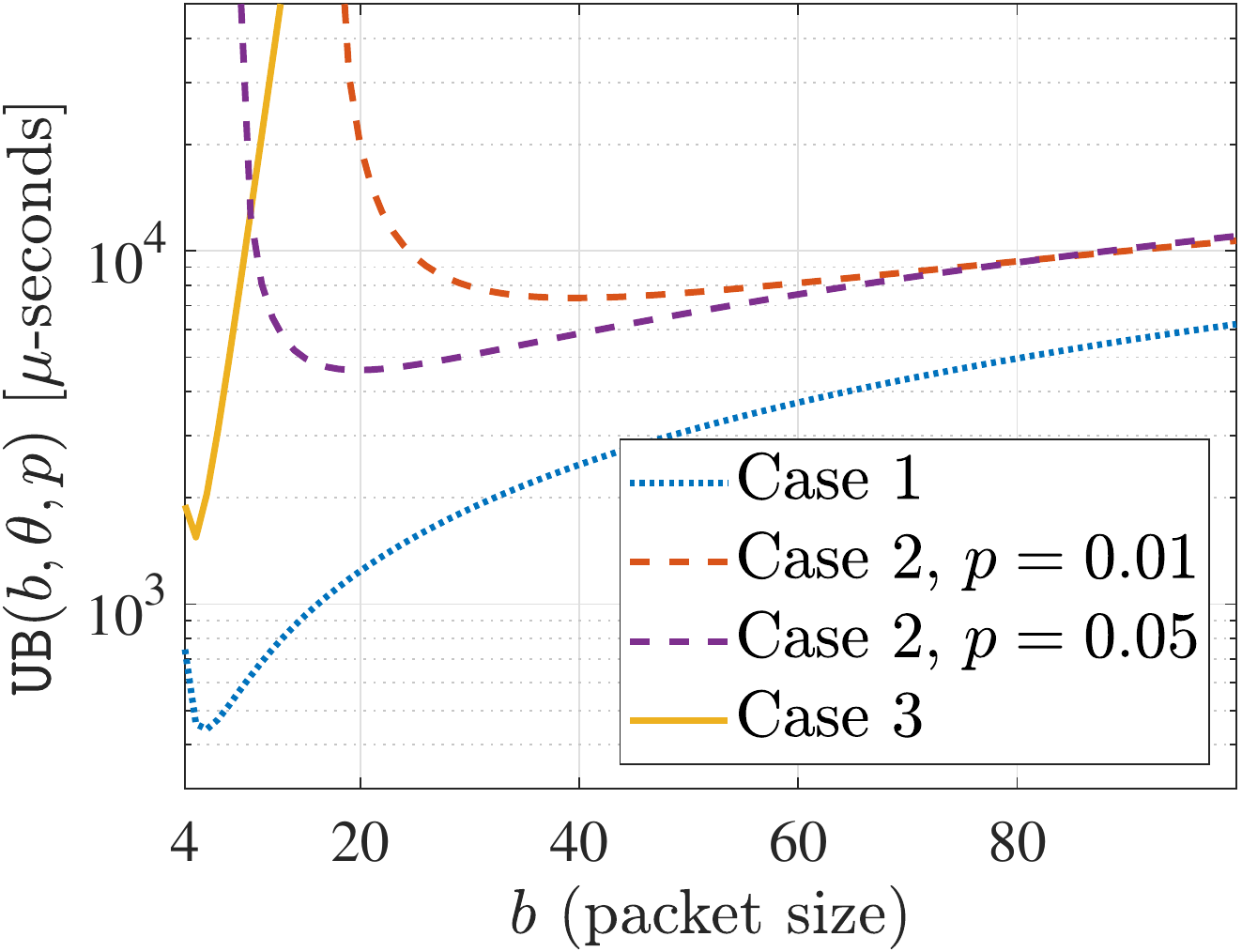}
        \caption{Transmission-time to find $\epsilon$-solution vs packet size (cf. Theorem~\ref{Thm:CUS}). 
        }
        \label{fig:rates_cases_n}
    \end{subfigure}
    \begin{subfigure}[t]{0.3\textwidth}
        \includegraphics[width=\textwidth]{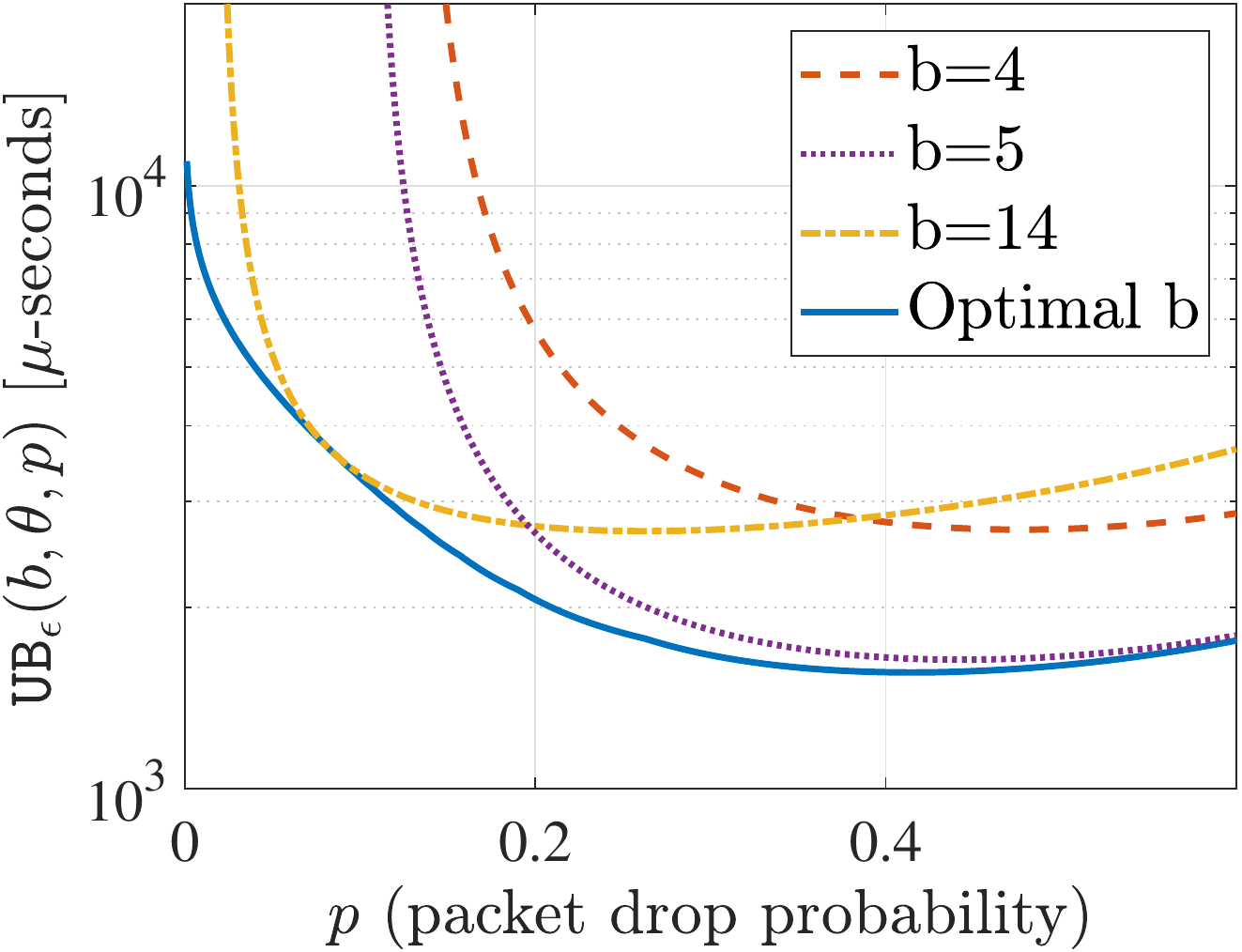}
         \caption{Transmission time to find $\epsilon$-solution vs  packet drop probability  (cf. Theorem~\ref{Thm:CUS}).}
        \label{fig:rates_cases_2}
    \end{subfigure}

    \caption{Illustrations of the transmission-time convergence for different rate functions $R(n,p)$. 
    }
\end{figure*}

\section{Interplay Between Distributed Algorithms \\and Communication Protocol} \label{section:InterPlay} 
In this section, we build on the results of the previous section to explore the interplay between  communication protocols and distributed algorithms.
We first introduce three abstract rate functions, $R(n,p)$, modeling various communication protocols. Afterward, we characterize the relationship between $bd$, $\theta$ $p$ and then provide numerical illustrations.


\subsection{Transmission Rate Models} \label{Sec:Iterplay-R}
Information and communication theory has a rich literature on  rate functions in various communication settings~\cite{el2011network,bertsekas1992data,koubaa2006comprehensive,polyanskiy2010channel}.
The shape of $R(n,p)$ typically depends on 1) the network setting, 2) communication channel, and 3) protocol overheads such as packet headers (source and destination IDs), parity bits for channel coding, and scheduling overheads in the MAC layer.
Most realizations of the transmission rate function  have a shape resembling one of the following three cases.
\begin{itemize}
 \item \textbf{Case 1 (constant):} $R(n,p)=C$ for some constant positive $C$ for all $n$ and $p$.
 \item \textbf{Case 2 (saturating):} $R(n,p)$ for some constant $p>0$ is a non-decreasing function of $n$ that saturates for large $n$. The saturation is slower for smaller $p$.
 \item \textbf{Case 3 (bell-shape):} $R(n,p)$ for some constant $p>0$ increases with $n$ up to some critical $n^{c}$ and gradually decreases to zero afterward.
\end{itemize}
\textbf{Case 1} models the fundamental rate \emph{upper-bound}, which has been the basis for the design of many communication protocols. The seminal works of Shannon showed that for any fixed $p>0$ there exist codes such that $R(n,p)$ converges $C$ as $n$ grows large~\cite{el2011network}. However, these results have two fundamental drawbacks. Firstly, the codes constructed by Shannon cannot be realized in practice. Secondly and more importantly, the result of Shannon requires $n$ to grow to infinity meaning that the latency (the time it takes to deliver a single packet) may grow to infinity. This is clearly not feasible for iterative  algorithms where the updates of each iteration depends on the results of previous iterations. \textbf{Case 2} extends \textbf{Case 1} by looking at finite $n$~\cite{polyanskiy2010channel}. In particular, we can model the achievable rate as~\cite{polyanskiy2010channel}
   \begin{equation}  \label{eq:RCmP}
       R(n,p)=C-P(n,p) ~~\frac{\text{bits}}{\text{second}},
   \end{equation}
 where $P(n,p)$ is a penalty on the transmission-rate that is decreasing with $n$ and increasing with $p$.
   Polyanskiy~\etal~\cite{polyanskiy2010channel} showed that the maximal achievable rate in the finite packet length regime is
   $$R(n,p)=C-\sqrt{\frac{V}{n}} Q^{-1}(p) + \mathcal{O}\left( \frac{\log n }{n} \right) ~ \frac{\text{bits}}{\text{second}},$$
   where $V$ is the so-called channel dispersion, and $Q(\cdot)$ is the Q-function. To give a concrete example, an additive white Gaussian noise (AWGN) channel with average signal-to-noise-ratio of $\kappa=1$~\cite{durisi2016toward} has
  $$C=\log(1+\kappa)=\log(2)~\text{ and }~V=\kappa\frac{2+\kappa}{(1+\kappa)^2} (\log e)^2\approx \frac{3}{2}.$$
 Although this approach substantially extends {\textbf{Case 1}}, this rate function is known for a handful of network settings.
 In practice, various communication standards may have specific limitations on the transmission power and MAC layer protocol and use various approaches to handle incoming interference from unintended transmitters. Consequently, the rate function for most practical communication networks are either unknown or is very different from the upper-bounds provided by {\textbf{Case 1}} and {\textbf{Case 2}}, especially for multiple access channels. \textbf{Case 3} models the rate function of a multiple access channel, regulated by contention-based protocols such as slotted-ALOHA and carrier-sense-multiple-access (CSMA)~\cite{bertsekas1992data,koubaa2006comprehensive}. The rationale behind this model is that increasing the offered network load (e.g., by increasing $n$) improves the rate up a certain point at which the network becomes congested. Further increase of the offered load only increases the collision among packets, leading to a drastic drop of the successful packet reception. The contention-based protocols are heavily used in most modern standards for wireless local area networks, such as IEEE~802.11 family, due to their easy implementation and inherent distributed coordination~\cite{bertsekas1992data}.
Fig.~\ref{fig:rates_cases} illustrates our three example rate models.

%
 %

\subsection{The Relationship between $bd$, $\theta$, and $p$} \label{Sec:Iterplay-bdtp}
In communication systems, there are internal relationships  between the number of payload bits  $bd$, overhead bits $\theta$, and reliability $p$. For example, $bd$ and $\theta$ might be related by an affine function $\theta=A (db)+B$, where $A$ and $B$ are some constants, modeling various overheads and packet headers such as channel coding, source and destination IDs, and scheduling overheads~\cite{Jiang2019LowLatency}. The constant $A$ models the channel coding overhead (to ensure a reliable communication over an unreliable channel) for linear codes. In a more generic class of codes, our affine $\theta$ is a first-order approximation of the overheads required to send $bd$ bits~\cite{roth2006introduction}. Moreover, there exist a non-zero packet failure probability $p$, which is usually a non-linear function of $b$ and $\theta$. The exact expression depends on the transmission scheme and channel model~\cite{roth2006introduction}, as we illustrate below. Parameter $B$ can also model the background traffic, which a node will send in parallel to running the   algorithm.

\subsection{Illustrative Examples} \label{sec:Iterplay-ill}
We now illustrate the impact of different rate functions $R(n,p)$ on the transmission-time convergence.
For illustration purposes,  consider Theorem~\ref{Thm:CUS} with $K=1$, $D=1$, $\epsilon = 0.1$, $\sigma=0.9$, $|\mathcal{L}|=20$, $d=1$, and $\theta=0$,  for the three cases of $R(n,p)$ in Subsection~\ref{Sec:Iterplay-R}.
We consider the following rate functions in  bits per seconds
 \begin{align*}
   R_1(n,p) &=C , ~~~~~~~~~~~~~R_2(n,p)=C-\sqrt{\frac{V}{n}} Q^{-1}(p), \\
   R_3(n,p)&=\frac{n}{5}\exp\left(-\frac{n}{5}\right) ,
 \end{align*}
 and an AWGN channel with signal-to-noise-ratio of 1, leading to $C=\log(2)$ and $V=1.5$ as explained in the last subsection.

 Fig.~\ref{fig:rates_cases} illustrates our three rate models, and Fig.~\ref{fig:rates_cases_n} shows the impact of $b$ on the transmission-time upper bound $\texttt{UB}_{\epsilon}(b,0,p)$.
In \textbf{Case~1}, where the effects of packet size are ignored, it is the best to use a very small $b$, the optimal choice is $b=5$.
In \textbf{Case 2}, where larger packet sizes can achieve higher rates, the optimal $b$ becomes much larger, i.e., $b\approx 40$ and $b\approx 20$, respectively, for $p=0.01$ and $p=0.05$. In \textbf{Case 3}, where sending very large packets by all the nodes may overload the channel (congestion), the optimal value for $b$ reduces. Commered to \textbf{Case 1}, channel congestion and reduced rate of \textbf{Case 3} lead to have a higher penalty after optimal $b$, indicated by a sharper increase in $\texttt{UB}_{\epsilon}(b,0,p)$.
Fig.~\ref{fig:rates_cases_2} demonstrates the effect of $p$ on the transmission-time convergence for rate mode $R_2(n,p)$.
The blue solid curve shows the optimal convergence rate $\texttt{UB}_{\epsilon}(b,0,p)$ for each $p$ by optimizing over $b$.
The results indicate on the importance of optimal $b$ on the convergence time. Interestingly, there is a delicate trade-off between having $p$ too small or too big, which we explore in the next section.


\section{Numerical Experiments}
\label{Sec:NumericalExample}

We test our result by training a logistic regression model on two  data sets: a) \texttt{SP-data}~\cite{anguita2012human} and b) \texttt{MNIST}~\cite{lecun1998gradient}. The \texttt{SP-data} has dimension $d=61$ and consists of $24,075$ points from smart phone sensors used to classify whether the person carrying the phone is moving (walking, running, or   dancing) or not (sitting or standing). We use random $m=10,000$ to train our model and the rest for validation. The \texttt{MNIST} has dimension $d=785$ and consists of pictures of handwritten digits. It has 10 labels, $m=60,000$  samples for training and $10,000$ samples for validation. We have trained a multiclass logistic regression using one-vs-rest approach.

  We consider logistic regression
 with $l_2$  regularization~\cite{koh2007interior}
\begin{equation} \label{eq:mainOptProb3}
\begin{aligned}
& \underset{\vec{z}\in \R^d}{\text{minimize}}
~ F(\vec{z})=  \frac{1}{m}\sum_{i=1}^m \log\left(1+ \exp(-y_i \cdot \vec{z}\tran \vec{v}_i)\right)+  \frac{\rho}{2}|| \vec{z}||_2^2,
\end{aligned}
\end{equation}
where $\rho>0$ is a regularization parameter,  $\vec{v}_i\in \R^d$ the explanatory feature vector, and  $y_i\in \{-1,1\}$ the binary output outcome. Note that $F(\vec{z})$ is $\rho$-strongly convex and has $\rho+V$ Lipschitz continuous gradients, where $V$ can be computed by using that for each data point $i$ the gradient of $\log\left(1+ \exp(-y_i \cdot \vec{z}\tran \vec{v}_i)\right)$ is $V_i$-Lipschitz continuous where $V_i=\|\vec{v}_i\|_2^2/4$. Note that from  Eq.~\eqref{equation:Wbound}, if  $\vec{z}^0=\vec{0}$ then we have $||\vec{z}^0-\vec{z}^{\star}||\leq \sqrt{(2/\rho) \log(2)}=:D.$ The problem has a unique optimizer $\vec{z}^{\star}$, provided that $\rho>0$.  With $\vec{z}^{\star}$ the logistic classifier outputs the label  $y=\texttt{sign}(\vec{z}^{\star\rm{T}} \vec{v} )$  for the new data $\vec{v}$.  We use $\rho=350$ for the \texttt{SP-data} and $\rho=1$ for \texttt{MNIST}, giving $99\%$ and $80\%$ accuracy on the test data, respectively.

\subsection{Communication Complexity}

  We use the quantized  version  of  the decentralized algorithm (Figs.~\ref{fig:Dec} and~\ref{fig:Deca}) and the distributed algorithm (Fig.~\ref{fig:Dist}) in Section~\ref{section:Application}.  In both cases there are $N=20$ nodes and we split the data equally among them.   Unless otherwise mentioned, we choose the parameters according to Theorems 1 and 3, e.g., for the $\texttt{SP-data}$ $\gamma=2/(\mu+L)\approx 6\times 10^{-5}$, $\sigma\approx 0.98$, and $K\approx 15.8$.  We compare the results  to the algorithm without quantizing (but using double precision floating points, 64 bits per dimension).  Fig.~\ref{fig:rates_cases_n} illustrates the convergence of the decentralized algorithm on the \texttt{SP-data}, indicating that our approach can maintain the convergence of unquantized algorithm ($b=64$) by just $b=14$ or $16$.    This means that the quantized version of the algorithm reduces the communicated bits by $75\%$ without slowing down the convergence speed.      Fig.~\ref{fig:Decd} depicts the convergence on the \texttt{MNIST} data, whose sample dimension is 13 times larger than that of the SP-data.   Again, when we communicate $b=14$ or $b=16$ bits per dimension we get almost the same convergence as the unquantized algorithm, suggesting that the results of the paper scale well with  the dimension.

\begin{figure*}
\centering
\begin{minipage}{.74\textwidth}
  \centering
    \begin{subfigure}[t]{0.32\textwidth}
        \includegraphics[width=\textwidth]{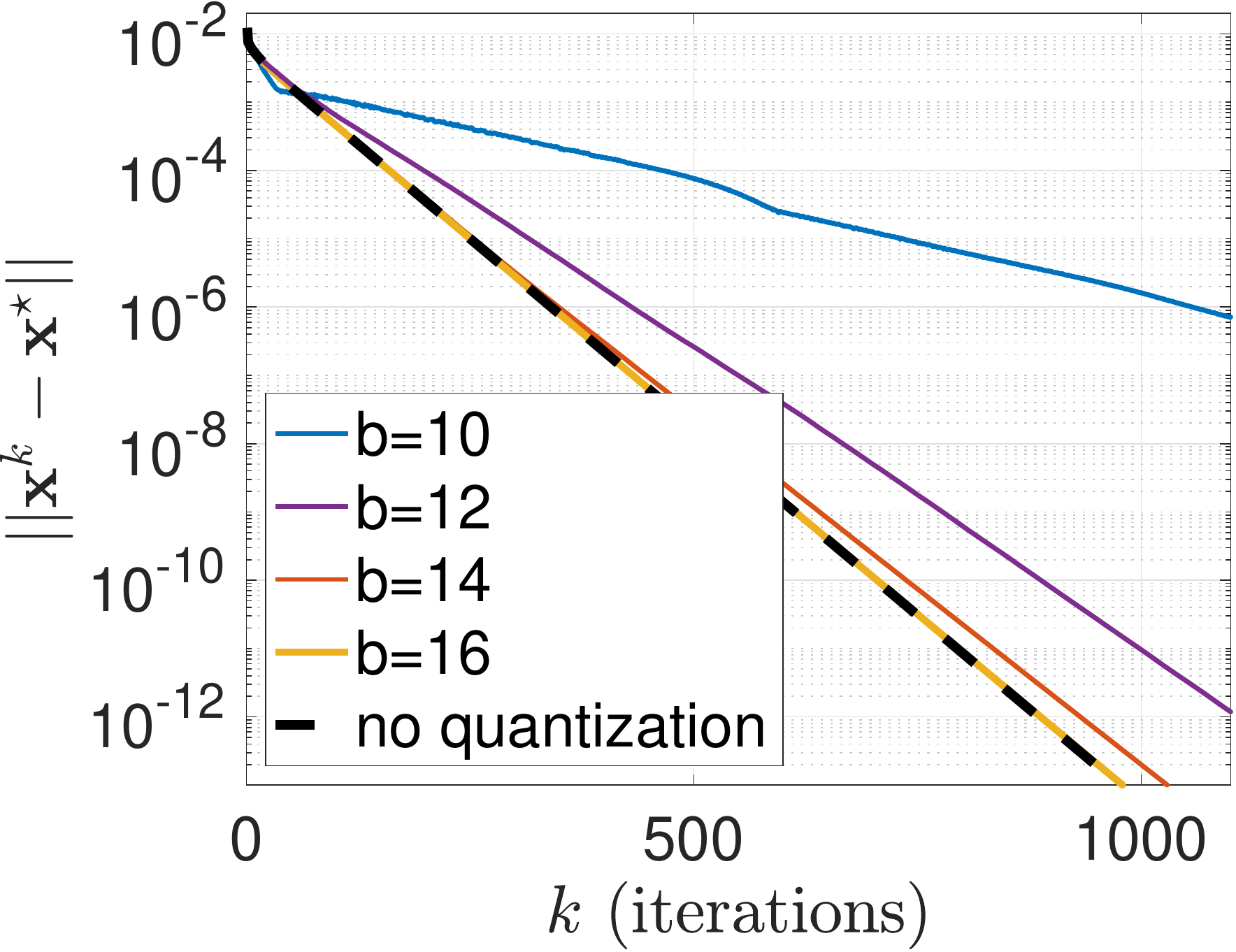}
        \caption{\texttt{SP-data} }
        \label{fig:Decb}
    \end{subfigure}
    \begin{subfigure}[t]{0.32\textwidth}
        \includegraphics[width=\textwidth]{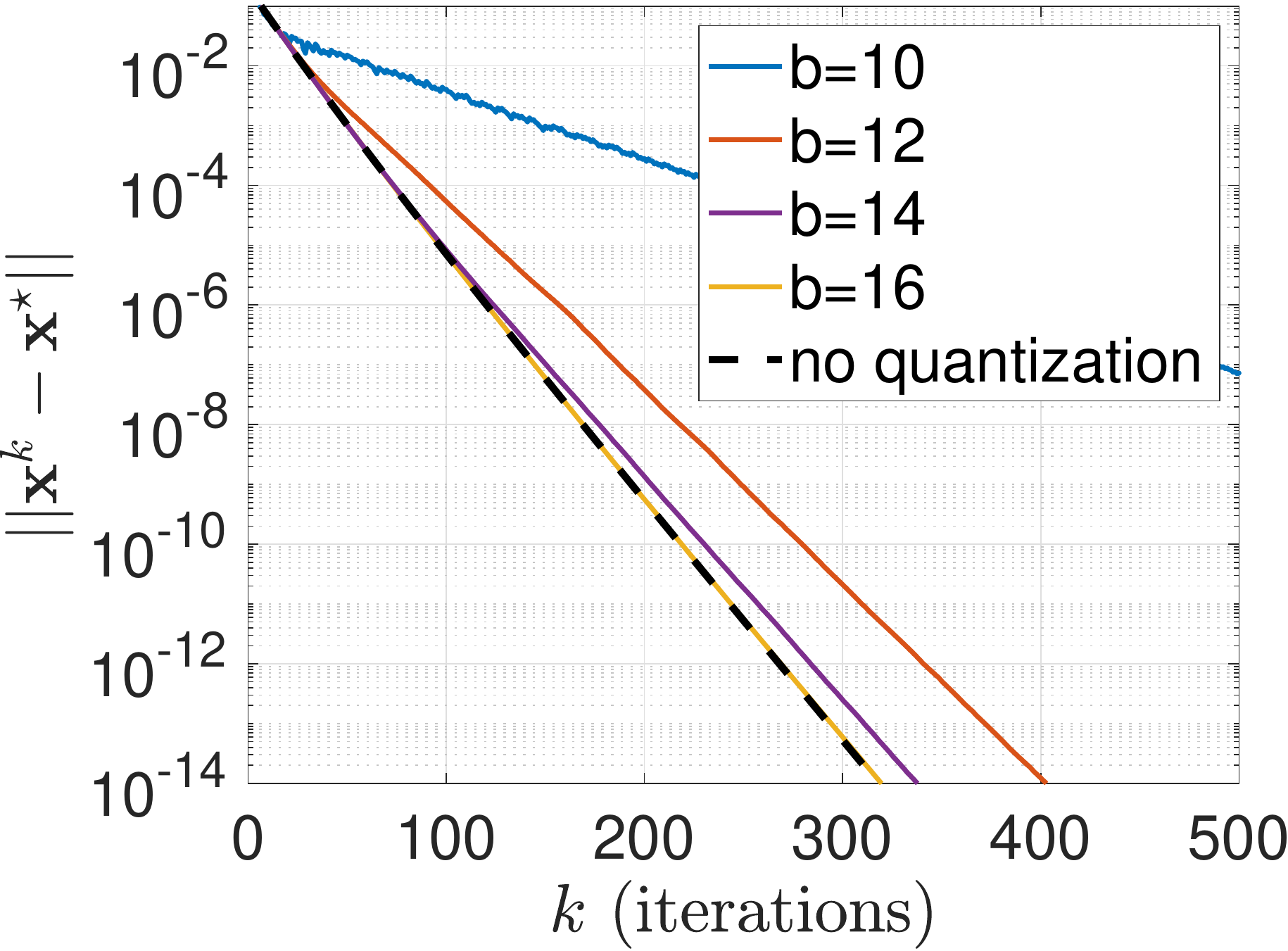}
        \caption{\texttt{MNIST}, digit 9}
        \label{fig:Decd}
    \end{subfigure}
        \begin{subfigure}[t]{0.32\textwidth}
        \includegraphics[width=\textwidth]{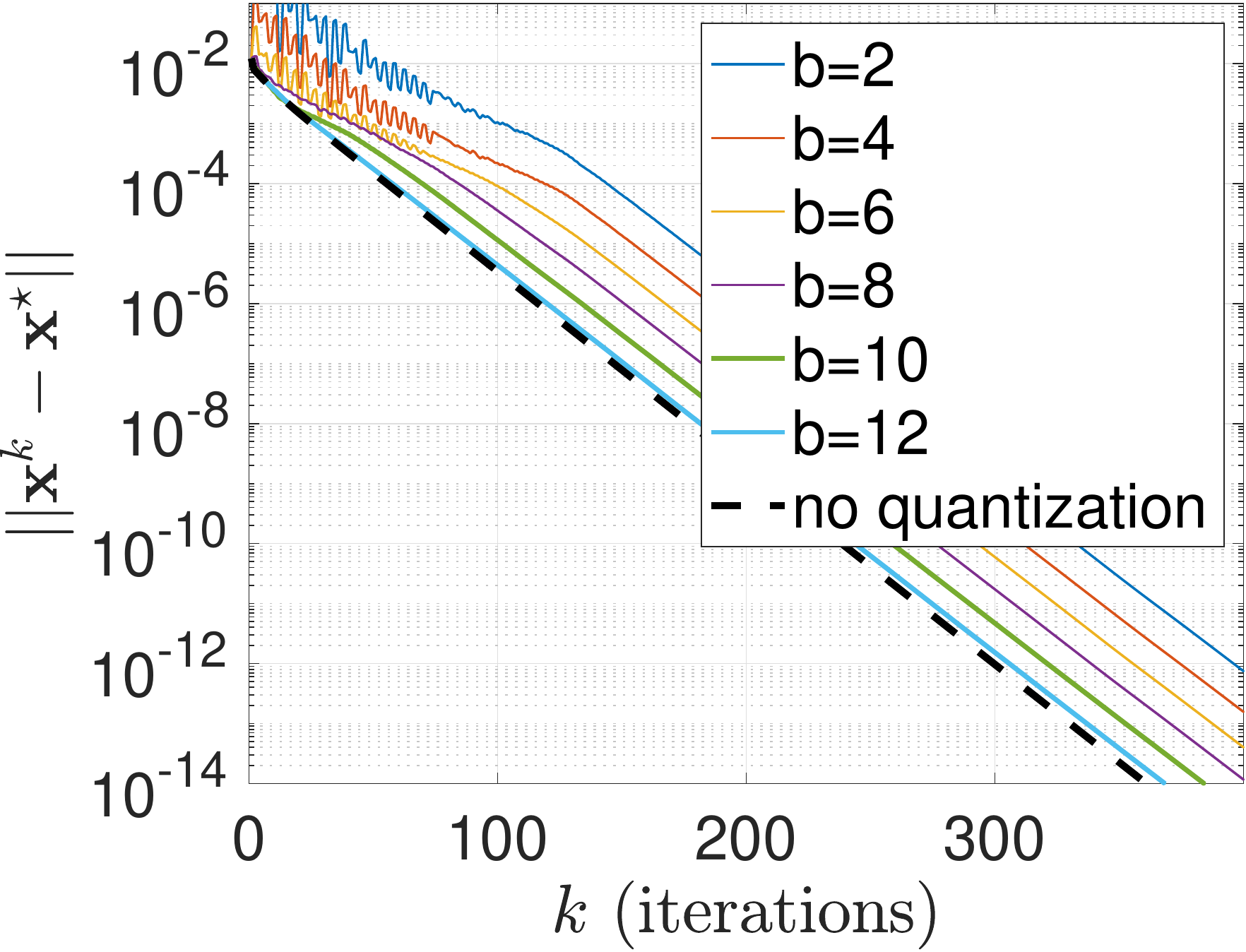}
        \caption{\texttt{SP-data} $\alpha {=} 0.92$, $\gamma{=}0.0002$}
        \label{fig:Decc}
    \end{subfigure}
    \caption{Convergence of the quantized  version of the decentralized algorithm in Section~\ref{sec:ExampleDL}. }
    \label{fig:Dec}
\end{minipage}~
\begin{minipage}{.23\textwidth}
        \includegraphics[width=1\textwidth]{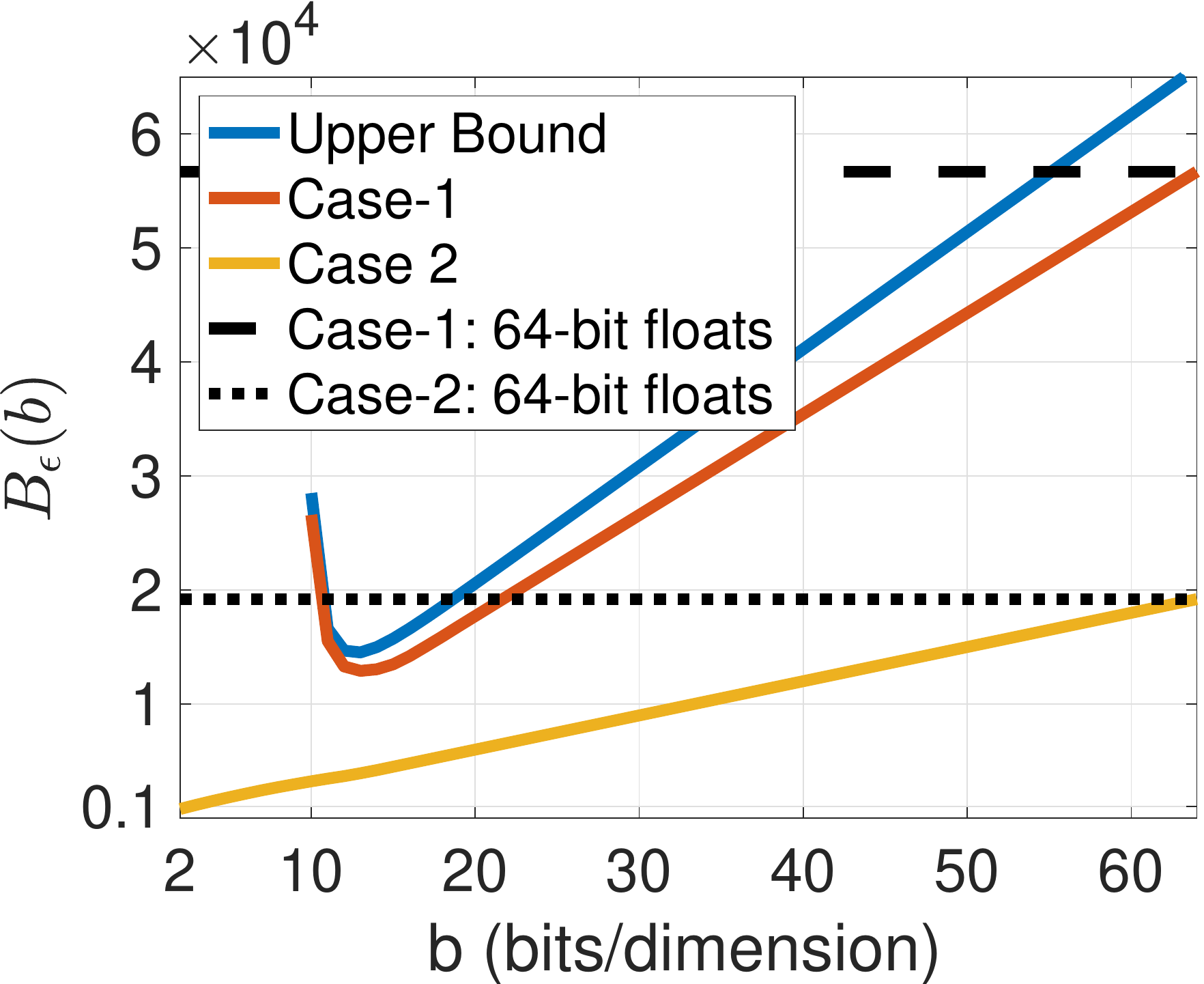}
        \caption{\texttt{SP-data} Bound in Eq.~\eqref{eq:BigB_eps}, $\epsilon=10^{-12}$. }
        \label{fig:Deca}
\end{minipage}
\end{figure*}

\begin{figure*}
\centering
\begin{minipage}{.5\textwidth}
  \centering
    \begin{subfigure}[t]{0.49\textwidth}
        \includegraphics[width=\textwidth]{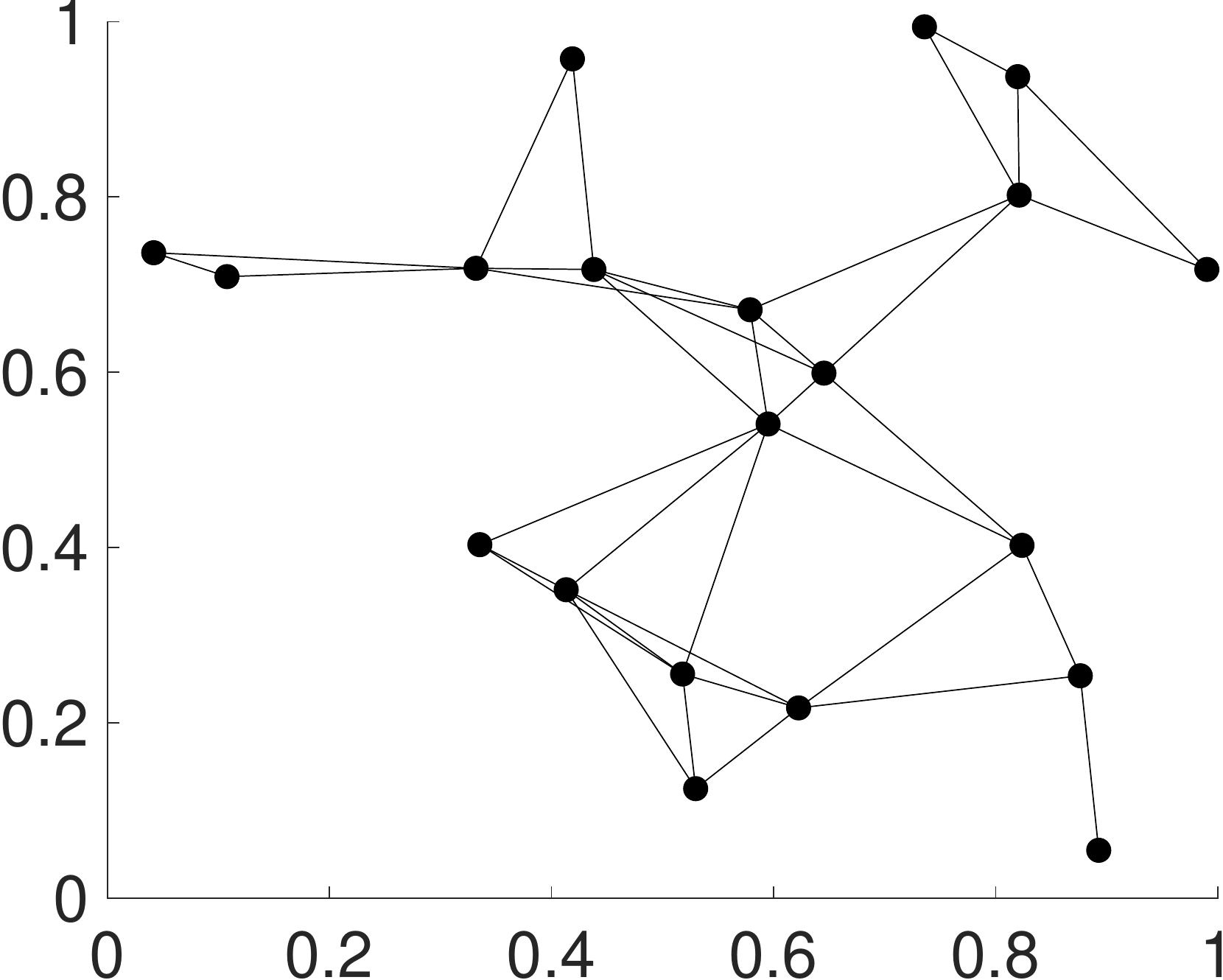}
        \caption{Communication graph.}
        \label{fig:DistA}
    \end{subfigure}~~~
    \begin{subfigure}[t]{0.49\textwidth}
        \includegraphics[width=\textwidth]{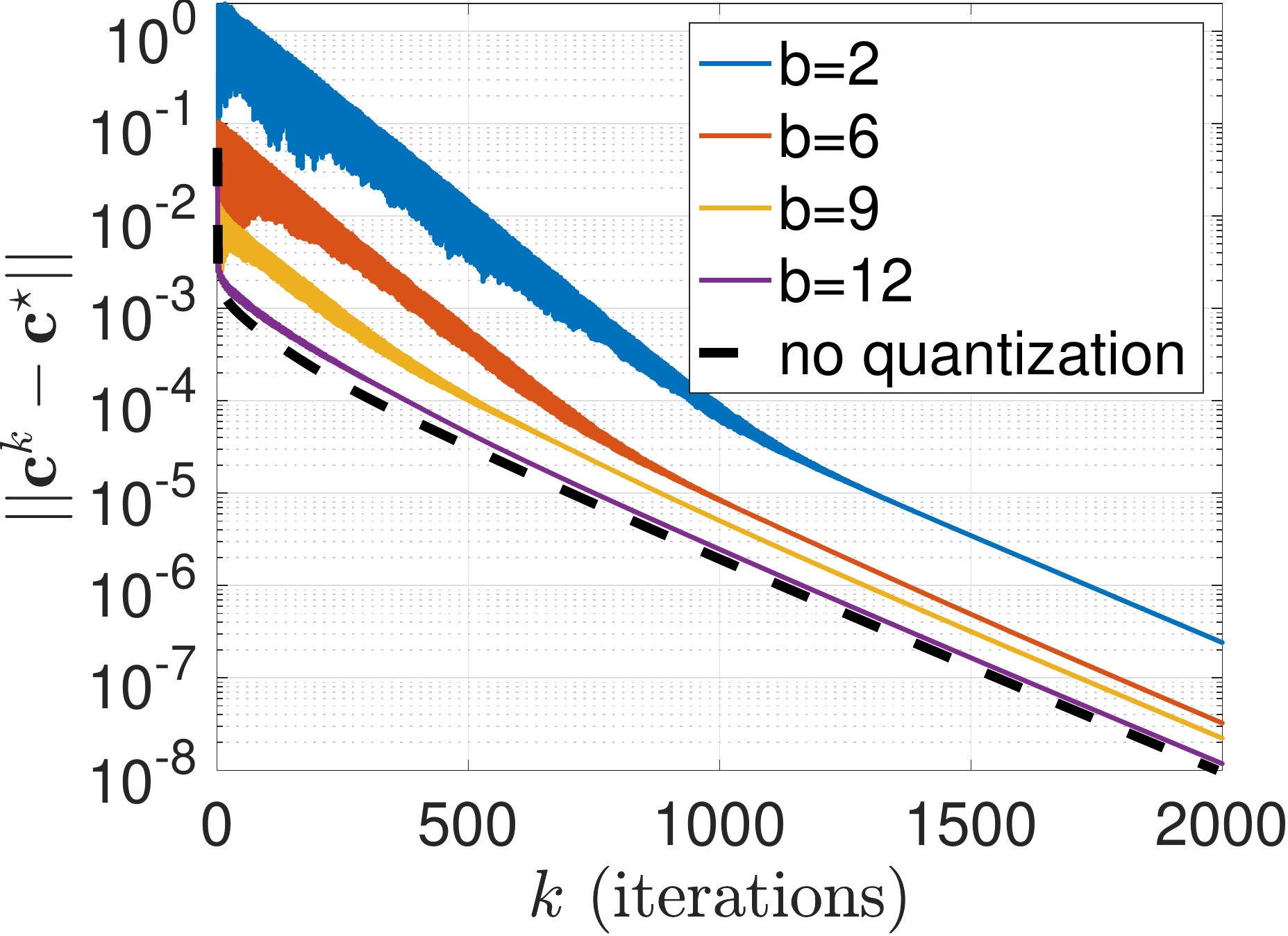}
        \caption{Convergence.}
        \label{fig:DistB}
    \end{subfigure}
    \caption{Distributed algorithm from
    Section~\ref{sec:ExamplesDL-DD}.} \label{fig:Dist} \vspace{-0.3cm}
\end{minipage}~
\begin{minipage}{.49\textwidth}
  \centering
    \begin{subfigure}[t]{0.49\textwidth}
        \includegraphics[width=\textwidth]{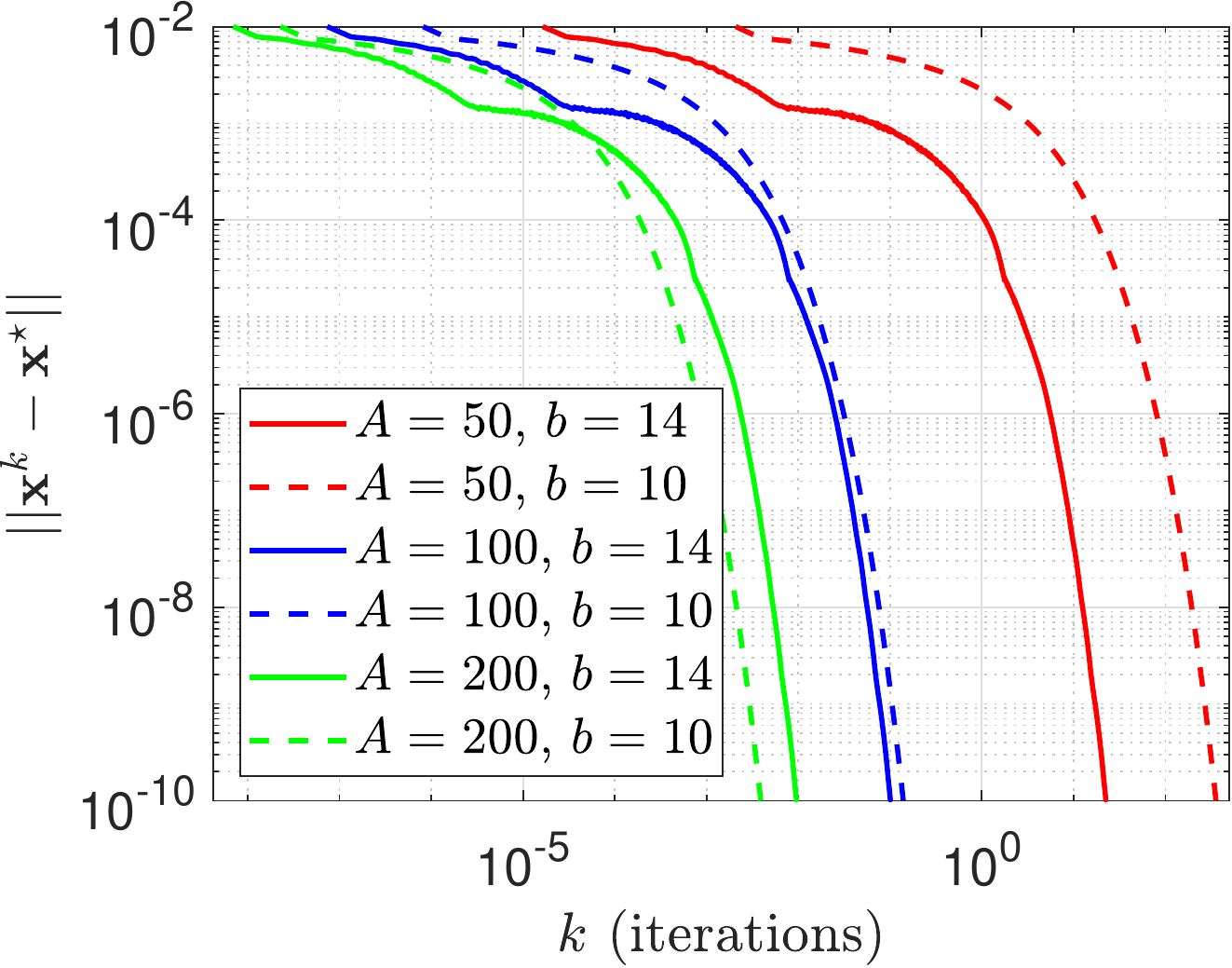}
        \caption{Rate function \textbf{Case 3}.}
        \label{fig:TimeCA}
    \end{subfigure}
    \begin{subfigure}[t]{0.49\textwidth}
        \includegraphics[width=\textwidth]{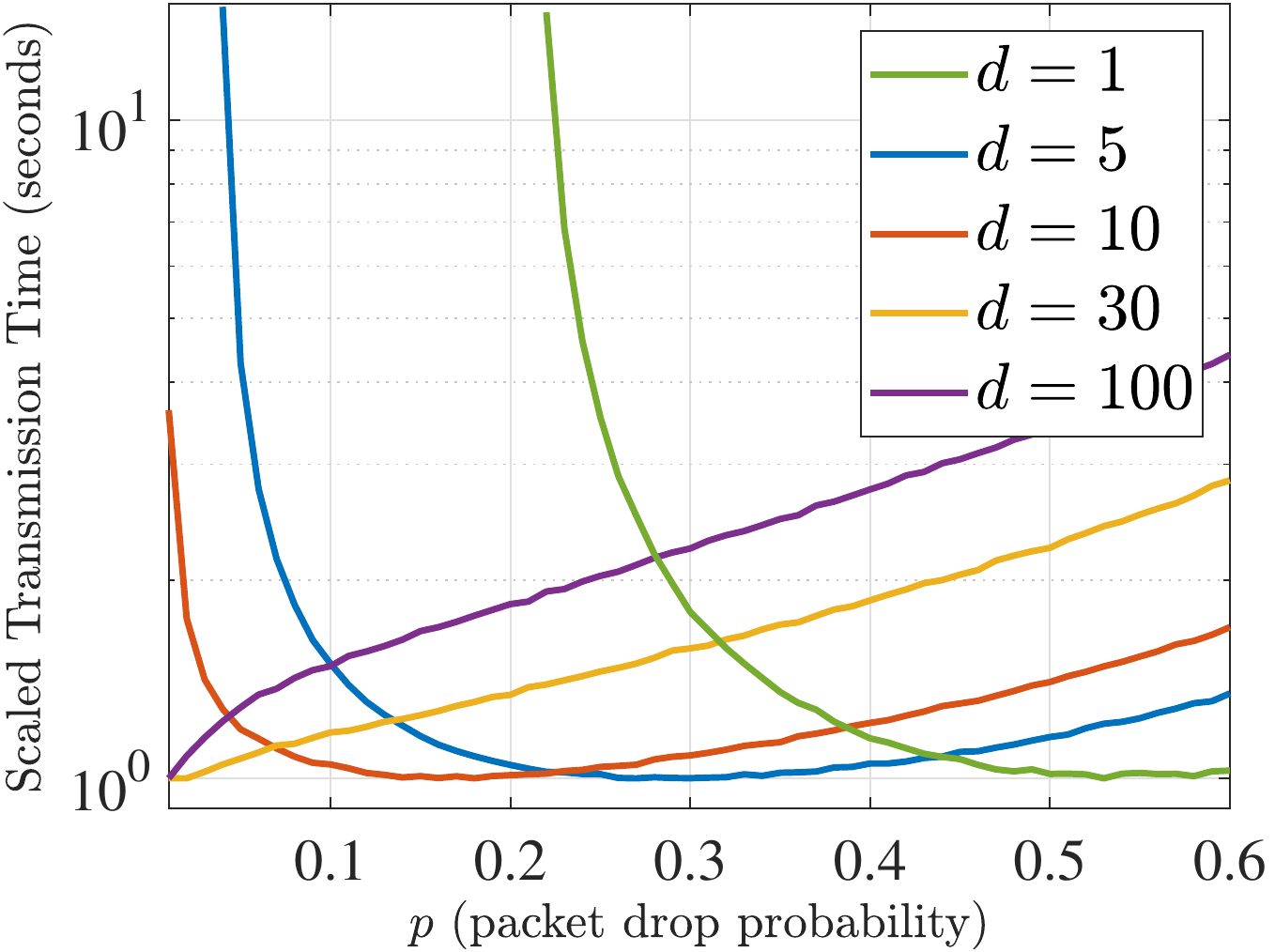}
        \caption{Rate function \textbf{Case 2}.}
        \label{fig:TimeCB}
    \end{subfigure}
    \caption{Transmission time complexity of logistic regression.} \label{fig:TimeC} \vspace{-0.3cm}
\end{minipage}
\end{figure*}

    This convergence can be improved further if we use smaller  $\alpha$ than suggested in Theorem~\ref{MainTheorem} and smaller $\gamma$ than suggested in Proposition~\ref{Theorem:Grad1}. These choices are usually fine in practice since the theorems are generally conservative and derived to capture the worst case behaviour.    By setting $\gamma=2\times 10^{-4}$ and $\alpha=0.92$, Fig.~\ref{fig:rates_cases_2} shows that $b=12$ guarantees  the same convergence rate as the non-quantized algorithm.    More interestingly, our approach allows for an extreme quantization, like $b=2$. With $b=2$ we need to communicate only  $800$ bits per dimension after $400$ iterations to reach $\epsilon=10^{-12}$ accuracy.     To reach the same accuracy without quantizing we need about $300$ iterations so about $19200$ bits per dimension. This means that  our approach reaches the same solution accuracy with $96\%$ less communications overhead.   Note that when $\alpha$ is smaller than suggested in Theorem~\ref{MainTheorem} then it can happen that the grid in Fig.~\ref{fig:Quant} decreases to quickly, so the communicated message $\vec{c}_i^k$ falls outside of the grid. This did not happen for $\alpha= 0.92$ but may happen smaller $\alpha$.  In practice, when the grid decreases too fast ($\alpha$ too small) then the nodes could send a distress signal  (requiring an additional bit) indicating that they need to increase the grid.
We leave such studies for future work.

   Fig.~\ref{fig:Deca} depicts the total number of bits/dimension needed to reach $\epsilon=10^{-12}$ when varying $b$. The figure compares the algorithms when the parameters are selected as in Figure~\ref{fig:Decb} (case 1, red curve),  Figure~\ref{fig:Decc} (case 2, yellow curve), and when no quantization is done (dashed and dotted lines) using 64 bit floating points.  We also plot the upper bound in Eq.~\eqref{eq:BigB_eps} (blue curve) on the total number of bits needed to achieve $\epsilon$-accuracy.  The results show that the upper bound is almost tight. The quantized algorithms always use fewer bits  to reach the $\epsilon$ accuracy. The optimal $b$ are $b=14$ and $b=2$ for the red curve and yellow curve, respectively, communicating only $25\%$ and $4\%$ of the bits needed if there is no quantization. Note that the red curve has a different shape than the yellow curve because in Theorem~\ref{MainTheorem} $\alpha$ is varies depending on $b$, whereas $\alpha=0.92$ is fixed for the yellow curve.

 Fig.~\ref{fig:Dist} shows the convergence of the quantized version of the distributed algorithm in Eq.~\eqref{eq:ItAlg2Grad} where the nodes communicate over a graph with $\alpha=0.99$ and $\gamma=0.7$.
 The communication graph is shown in Fig.~\ref{fig:DistA}. It is randomly generated by distributing the nodes randomly in the box $[0,1]\times[0,1]$ and creating an edge between  nodes if the distance between them is less than $0.3$.
 The results are similar as before, with $b=12$ bits/dimension we get almost the same convergence rate as without quantizing. To achieve $\epsilon=10^{-6}$ accuracy with $b=2$ the quantized algorithm communicates $3466$ bits/dimension during $1733$ iterations but the non-quantized algorithm communicates $71808$ bits during $1122$ iterations. This means that we reduce the communication by $95\%$ to get the $\epsilon$ accuracy compared to the non-quanitzed algorithm.
 We use $\alpha=0.98$

 \subsection{Transmission Time Complexity}

 We now explore the transmission time complexity of the logistic regression algorithm from the previous section using rate functions of \textbf{Case 3} and \textbf{Case 2}, as illustrated in Section~\ref{Sec:Iterplay-R}.
 In these simulations we do not consider the overhead bits, i.e., we set $\theta=0$.

 Fig.~\ref{fig:TimeCA} illustrates the convergence on the \texttt{SP-data} as a function of transmission time when the rate function follows
 $$R(n)= \texttt{Max\_Rate} \times  \frac{n}{A}\exp\left(1 -\frac{n}{A}\right) \frac{\text{bits}}{\text{seconds}},$$
 for some positive constants $\texttt{Max\_Rate}$ and $A$.  The constant $A$ captures the level of channel congestion. Larger $A$ implies that the channel get congested faster (with smaller n). In a slotted-ALOHA or CSMA protocol with $n$ nodes, for example, $A$ is inversely linearly proportional to $n$~\cite{bertsekas1992data,koubaa2006comprehensive}. \footnote{In particular, with the common assumption of Poisson packet arrival for every node and i.i.d. traffics, the additivity of the Poisson distribution implies that we can model the network of $n$ nodes by one virtual node having aggregated arrival of $n$ time~\cite{bertsekas1992data}.} The constant $\texttt{Max\_Rate}$ is the maximum rate, it is achieved by setting $n=A$.
 We set $\texttt{Max\_Rate}=10^{12}$ and $A=50$, $100$, and $200$ and $b=10$ and $14$. From the previous subsection we know that $b=14$ is the optimal quantization size if we want to minimize the total number of communicated bits needed to reach an $\epsilon$-accurate solution.  However, Fig.~\ref{fig:TimeCA} illustrates that as $A$ increases (the channel contention level increases), the network enjoys smaller $b$. In particular, for $A=50$, it takes roughly twice the transmission time to reach the same solution accuracy with $b=10$ as compared to $b=14$. On the other hand, when $A=200$, it takes roughly twice the transmission time to reach the same solution accuracy with $b=14$ as compared to $b=10$. In general, the optimal $b$ decreases as $A$ increases.

 Fig.~\ref{fig:TimeCB} illustrates the convergence for the \textbf{Case 2} transmission-rate function $R_2(\cdot)$ given in Section~\ref{sec:Iterplay-ill}. We explore how the problem dimension $d$ affects the transmission time convergence. To see the affects of $d$, we run logistic regression on a synthetic data set with tuneable $d$: set $y_i=1$ and $\vec{v}_i=\vec{1}+\vec{n}_i\in\R^d$ with probability $0.5$ and $y_i=-1$ and $\vec{v}_i=-\vec{1}+\vec{n}_i\in \R^d$  otherwise where $\vec{n}_i \in \R^d$ are i.i.d. Gaussian variables with zero mean and variance $1$.  We use $\alpha=0.98$ and $b=2$. The figure illustrates the scaled  transmission time needed to reach $\epsilon=10^{-6}$ accuracy as a function of $p$.  In particular, if $\bar{T}_{\epsilon}(p,d)$ is the transmission time until we  find an $\epsilon$ accuracy then the scaled transmission time is $\bar{T}_{\epsilon}(p,d)/\min_{p\in[0,1]} \bar{T}_{\epsilon}(p,d) $.  The results show that for small dimensions, $d=1$, $5$, and $10$, there is a delicate  trade-off between having $p$ too small or too big, however, as $d$ increases it becomes better to choose  small $p$. This means that in the small packet region ($d$ is small) it is good to trade-off packet reliability for quicker messages.  However, as   the packet size increases (with growing $d$) and $R_2(\cdot)$ saturates it becomes better to spend more time to communicating more reliable packets. This can in parts been explained by the fact that when we use large packets  $R_2(\cdot)$ approximates the Shannon capacity and we efficiently achieve more reliable communication.

\section{Conclusions}\label{sec:conclusions}

We investigated the convergence of distributed algorithms under limited communication. We proved that a simple quantization scheme that maps a real-valued vector to a constant number of bits can maintain the linear convergence rate of   unquantized algorithms. We exemplified our results on two classes of communication graphs: \emph{i}) decentralized one where a single master  coordinates information from workers, and \emph{ii}) fully distributed one where nodes coordinate   over a communication  graph. We  numerically  illustrated our theoretical convergence results
in distributed learning on test data including MNIST. 
Our quantization can reduced the communicated overhead (in terms of bits) by $95\%$ in some cases to reach a predefined solution accruacy, compared to using floating points.
 Future work includes exploring adaptive quantization schemes for more general classes of algorithm, including accelerated optimization algorithms and algorithms with sub-linear converge rates.

\appendices

\section{Proof of Theorem~\ref{MainTheorem}} \label{App:Proof_of_main1}

\textbf{Eq.~\eqref{eq:MainBound1}:} We  prove by  mathematical induction that
 $||\vec{x}^{k}-\vec{x}^{\star}|| \leq \alpha(b)^k D$ and
 $||\vec{c}^{k}-\vec{q}^{k}||_{\infty}\leq \tau r^{k-1}$,
 where $\tau=1/(\lfloor 2^{b}\rfloor-1)$.
  For $k=0$ we have $||\vec{x}^{0}-\vec{x}^{\star}||\leq D$ by Assumption~\ref{assumption:contr}-c) and $||\vec{c}^{0}-\vec{q}^{0}|| =0\leq \tau r^{-1}$ since $\vec{q}^0=\vec{c}^0$.
  Suppose that the result holds for some integer $k\geq 0$.  Then
  by the triangle inequality and Eq.~\eqref{eq:AlgLips-1}  we have
 \begin{align*}
     || \vec{x}^{k+1}-\vec{x}^{\star}||
     \leq&  L_A  ||\vec{q}^{k}-\vec{c}^{k}||_{\infty}  +\sigma || \vec{x}^{k}-\vec{x}^{\star}|| \\
    \leq& L_A\tau r^{k-1} + \sigma || \vec{x}^{k}-\vec{x}^{\star}|| \\
        \leq& K\tau \alpha(b)^k D+ \sigma\alpha(b)^k D \\
        =& (K\tau+ \sigma )\alpha(b)^k D
        = \alpha(b)^{k+1}D
\end{align*}
where we have used that $A(\vec{c},\vec{x})$ is $L_A$-Lipschitz continuous in $\vec{c}$ and that $A(C(\vec{x}),\vec{x})$ is $\sigma$-pseudo-contraction in $\vec{x}$.
  We also have by the triangle inequality and Eq.~\eqref{eq:AlgLips-2}  that
  \begin{align*}
     ||\vec{c}^{k+1}-\vec{q}^{k}||_{\infty}  \leq&  || \vec{c}^{k+1}-\vec{c}^{k} ||_{\infty} +||\vec{c}^{k} -\vec{q}^{k}||_{\infty} \\
       \leq&  L_C|| \vec{x}^{k+1}-\vec{x}^{k} || +  r^{k-1} \tau, \\
       \leq&  L_C (|| \vec{x}^{k+1}{-}\vec{x}^{\star} ||{+}|| \vec{x}^{k}{-}\vec{x}^{\star} ||) {+}    r^{k-1} \tau \\
       \leq&  2 L_C \alpha(b)^{k} D +   \tau  \frac{K}{L_A} \alpha(b)^{k}  D, \\
       \leq& K \left( \frac{\sigma}{L_A}+   \tau  \frac{K}{L_A} K^{-1}  \right) \alpha(b)^{k}  D  \\
       \leq& K \left( \frac{\sigma}{L_A}+   \tau  \frac{K}{L_A} \right) \alpha(b)^{k}  D   = r^{k}
  \end{align*}
 Therefore, from Lemma~\ref{lemma:quant}, $||\vec{c}^{k+1}-\vec{q}^{k+1}||_{\infty}\leq \tau r^{k}$.

 \textbf{Prove of Eq.~\eqref{eq:epsacc}:} 
  By using the inequality $1+t\leq \exp(t)$ for all $t\in \R$ we have that
  $$\alpha(b)^k D=(1-(1-\alpha(b)))^kD\leq \exp(-(1-\alpha(b))k)D.$$
  Hence, the result follows by part a) by rearranging $\exp(-(1-\alpha(b))k)D\leq \epsilon$.

  \section{Proof of Corollary~\ref{Corr:TL1} and~\ref{Corr:TL2}} \label{App:CorrProofs}

    \textbf{Corollary~\ref{Corr:TL1}:} From Proposition~\ref{Theorem:Grad1} and Theorem~\ref{MainTheorem} we have $\sigma=(\kappa-1)/(\kappa+1)$, $L_A=2\sqrt{d}/(\mu+L)$, $L_C=L$ and
   $$K\leq\frac{2L_A L_C}{\sigma}\leq 12 \sqrt{d},$$
   since $L/(\mu+L)\leq 1$ and $\kappa\geq 2$.  Therefore, using that $2^b-1\geq 2^{b-1}$, we have
   $\alpha(b)\leq 1-({2}/{\kappa+1})+ {24 \sqrt{d}}/{2^b},$
   or by rearranging
  $$\frac{1}{1-\alpha(b)} \leq \frac{\kappa+1}{2} \frac{1}{1-12\sqrt{d}(\kappa+1)/2^b} \leq \kappa+1, $$
  where we have used that $b= \lceil \log_2(24\sqrt{d}(\kappa+1)) \rceil$.
   Eq.~\eqref{eq:epsacc} in Theorem~\ref{MainTheorem}  now yields
     $$ \log_2\left(24(\kappa+1) \sqrt{d} \right) (\kappa+1) d \left(\log \left(D\right)+\log \left(\frac{1}{\epsilon}\right)\right) \frac{\text{ bits}}{\text{node}}.$$

     \textbf{Corollary~\ref{Corr:TL2}:} From~\cite{nesterov2013introductory}, the algorithm is $\sigma$-linear with
$$\sigma=\sqrt{1-\frac{1}{\kappa}}\leq 1-\frac{1}{2\kappa}.$$
We also have $L_A=\sqrt{d}/L$, $L_C=L$ meaning that
$K\leq 2\sqrt{2d}$,
$$\alpha(b)\leq 1-\frac{1}{2\kappa} + \frac{4\sqrt{2d}}{2^b},$$
and
$$\frac{1}{1-\alpha(b)}=\frac{2\kappa}{1-8\kappa \sqrt{2d}/2^b} \leq 4\kappa,$$
since $b=\left\lceil  \log_2(16 \kappa \sqrt{2d} )\right\rceil$.   Eq.~\eqref{eq:epsacc}  now yields
 $$4 \left\lceil  \log_2(16 \kappa \sqrt{2d} )\right\rceil \kappa d \left(\log \left(d\right)+\log \left(\frac{1}{\epsilon}\right)\right) \frac{\text{ bits}}{\text{node}}.$$

  \section{Proof of Theorem~\ref{Thm:CUS}} \label{App:ProofTP}

Let $M$ be the random variable defined in Eq.~\eqref{eq:mk_RV}.
Since   $m_i$ are all realizations of $M$ we have
     $E[T_\epsilon(b,\theta,p)]  
                                             =\Delta(b,\theta,p) \times  k_{\epsilon}(b) \times  E[M].$
 To finish the prove we derive the inequality
   $$\frac{\log((N-1)N)}{\log(1/p)}\leq E[m] \leq  1+ \frac{1}{\log(1/p)}+\frac{\log((N-1)N)}{\log(1/p)}.$$
Recall that
    $P\left[ S_l =m\right]=p^{m-1}(1-p)$ and 
    $P\left[S_l\leq m\right]=1-p^{m}$.
 Using that  the communications are independent across  links $\mathcal{L}$  we have
   $P(M\leq m)=(1-p^{m})^{|\mathcal{L}|}.$
%
  This means that
  \begin{align*}
      E[M]=& \sum_{m=1}^{\infty} m P(M=m) =  \sum_{m=0}^{\infty} P(M> m) \\
                 =&  \sum_{m=1}^{\infty} (1- P(M\leq m)) = \sum_{m=1}^{\infty} (1- (1-p^{m})^{|\mathcal{L}|}).
  \end{align*}
 The last sum can be bounded by integral as follows
 \begin{align*}
     \int_0^{\infty} (1-h(z)^{|\mathcal{L}|}) dz \leq& \sum_{m=1}^{\infty} (1- h(m)^{|\mathcal{L}|}) \\ \leq& 1+  \int_0^{\infty} (1-h(z)^{|\mathcal{L}|}) dz
 \end{align*}
 where  $h(z)=1-p^z$ and we have used that $(1-h(z)^{|\mathcal{L}|})\geq0$ is decreasing in $z$.
 Using that $h'(z)/(1-h(z))=-\log(p)$, integration by substitution gives
 \begin{align*}
    \int_0^{\infty} (1{-} h(z)^{|\mathcal{L}|} )dz=&  \frac{-1}{\log(p)}\int_0^{\infty} \frac{1- h(z)^{|\mathcal{L}|}}{1-h(z)} h'(z) dz \\
          =&  \frac{-1}{\log(p)}\int_0^{1} \frac{1- w^{|\mathcal{L}|}}{1-w} dw \\
          =&  \frac{-1}{\log(p)}\int_0^{1} \sum_{i=0}^{|\mathcal{L}|-1}  w^i dw
          {=}  \frac{-1}{\log(p)} \sum_{i=1}^{|\mathcal{L}|} \frac{1}{i}.
 \end{align*}
  The result now follows from the fact that
 $$   \log(|\mathcal{L}|) \leq \sum_{i=1}^{|\mathcal{L}|} \frac{1}{i}\leq 1+ \log(|\mathcal{L}|).$$

\section{Proof of Theorem~\ref{th4}} \label{App:ProofTF}

 Using the independence of failure events in all   links $\mathcal{L}$  we have
   $P(M\leq m)=(1-p^{m})^{|\mathcal{L}|}.$
   From Theorem~\ref{MainTheorem}, we find an $\epsilon$-accurate solution  after  $k\geq k_{\epsilon}(b)$ successful iterations.
   Therefore, we just have to ensure that the communication of the first $k$ iterations is successful.
   In particular, we need
   \begin{align*}
        P(M_1\leq m,\ldots,M_k\leq m)
                    =& P(M_1\leq m)  {\cdots} P(M_k\leq m)\\
                     =&(1-p^{m})^{ |\mathcal{L}|k} \geq \delta
   \end{align*}
   where $k=k_{\epsilon}(b)$. By rearranging we get that
   $$m \geq \log_p (1-\delta^{1/(|\mathcal{L}|k)})= \frac{\log\left(1-\delta^{1/(|\mathcal{L}|k)}\right) }{\log(p)}. $$
 Using that $\log(y+1)\geq y/(y+1)$ for all $y>-1$ we have
 \begin{align*}
    \frac{\log\left(1{-}\delta^{1/(|\mathcal{L}|k)}\right) }{\log(p)}&= \frac{{-}\log\left(1-\delta^{1/(|\mathcal{L}|k)}\right) }{\log(1/p)}, \\
     &\leq \frac{1}{1{-}\delta^{1/(|\mathcal{L}|k)}} \frac{1}{\log(1/p)},
 \end{align*}
 where the last inequality is due to  $\delta^{1/(|\mathcal{L}|k)}\leq 1$. 
 To finish the proof, we should show that  
 $$  \frac{1}{1{-}\delta^{1/(|\mathcal{L}|k)}} \leq \frac{|\mathcal{L}|k}{1-\delta}.$$
This inequality can be derived by using the following variant of Bernoulli's inequality: $(1+x)^r \leq 1+rx$ for all $r\in[0,1]$ and $x\geq -1$. By applying the change of variables $\delta=1+x$ and $r=1/(|\mathcal{L}|k)$ and rearranging we obtain the result.

\section{Proof of Proposition~2}
 \label{Appendix:Proof_Prop2}

 The algorithm in Eq.~\eqref{eq:ItAlg2Grad} in Section~\ref{sec:ExamplesDL-DD} is a variant of dual decomposition and is similar to the algorithm~\cite{uribe2018dual}.
 To derive the algorithm we reformulate the optimization problem in Eq.~(4) in consensus form as follows:
\begin{equation} \label{eq:mainOptProb-dual}
\begin{aligned}
& \underset{x}{\text{minimize}}
& &   F(\vec{c}):=\sum_{i=1}^N f_i(\vec{c}_i)  \\
&  \text{subject to} && \bec{W}\vec{c}=\vec{0} \iff \vec{A}\vec{c}=\vec{0}
\end{aligned}
\end{equation}
 where $\bec{W}=\vec{W}\otimes \vec{I}$ and $\vec{A}=\sqrt{\bec{W}}$ is the matrix square root of $\bec{W}$, i.e., $\vec{A}\tran\vec{A}=\bec{W}$, obtained by eigenvalue decomposition.\footnote{$\bec{W}$ and $\vec{A}$ have the same null-space so the constraints $\bec{W}\vec{x}=\vec{0}$ and $\vec{A}\vec{x}=\vec{0}$ are equivalent.}
 In particular, since $\bec{W}$ is symmetric and semi-positive-definite we can write $\vec{A}=\sqrt{\BLa}\vec{Q}$ where  $\bec{W}=\vec{Q}\tran \BLa \vec{Q}$ and the columns of $\vec{Q}$ are the normalized eigenvectors of $\bec{W}$, and  $\BLa$ is a diagonal matrix where the diagonal elements are the eigenvalues of $\bec{W}$ in descending order, i.e.,
 $$\BLa=\texttt{diag}\left(\underbrace{\lambda_{\max}(\bec{W}),\ldots,\lambda_{\min}^+(\bec{W})}_{d(N-1) \text{ components}}, \underbrace{0,\ldots,0}_{d \text{ components}}\right).$$
 Note that because the last $d$ diagonal elements of $\BLa$ are zero the last $d$ rows of a $\vec{A}$ are zero.
 Define $\bec{A}\in \R^{d(N-1)\times d}$ as  the non-zero rows of $\vec{A}$.
  $ \bec{A}$ has full row rank since the eigenvectors are linearly independent, so the square matrix $\bec{A} \bec{A}\tran$ is non-singular. 
 We show that the algorithm function $\vec{x}\mapsto A(C(\vec{x}),\vec{x})$ is contractive on $\mathcal{X}:=\texttt{Im}(\bec{W})$ in the following norm.
\begin{lemma}
   Define $\vec{M}:=(\bec{A}\bec{A}\tran)^{-1}\bec{A}$. Then
   $||\vec{x}||_{\vec{M}}:= ||\vec{M} \vec{x}||_2$
   is a norm on $\mathcal{X}$.\footnote{Note that $||\cdot||_{\vec{M}}$ is not a norm on all of $\R^{Nd}$.}
   Moreover, if we define $M_1=||\vec{M}||_2$ and $M_2=||\vec{A}\tran||_2$ then
   $ ||\vec{x}||_{\vec{M}} \leq M_1 ||\vec{x}||_2$  and $||\vec{x}||_2 \leq M_2 ||\vec{x}||_{\vec{M}}$,
   for all $\vec{x}\in \mathcal{X}$.
\end{lemma}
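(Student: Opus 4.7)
The plan is to exploit the fact that on $\mathcal{X}=\texttt{Im}(\bec{W})$ the matrix $\vec{M}$ acts as a left inverse of $\bec{A}\tran$, so $\|\cdot\|_{\vec{M}}$ is really the Euclidean norm of the unique ``coordinates'' of $\vec{x}$ with respect to the column map $\bec{A}\tran$. First I would observe that since $\bec{W}=\vec{A}\tran\vec{A}$ and the last $d$ rows of $\vec{A}$ vanish, we have $\texttt{Im}(\bec{W})=\texttt{Im}(\vec{A}\tran)=\texttt{Im}(\bec{A}\tran)$. Because $\bec{A}$ has full row rank (its rows come from eigenvectors associated to the nonzero eigenvalues of $\bec{W}$), $\bec{A}\bec{A}\tran$ is invertible and $\bec{A}\tran$ is injective, so every $\vec{x}\in\mathcal{X}$ admits a \emph{unique} $\vec{y}$ with $\vec{x}=\bec{A}\tran\vec{y}$, and then $\vec{M}\vec{x}=(\bec{A}\bec{A}\tran)^{-1}\bec{A}\bec{A}\tran\vec{y}=\vec{y}$.

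Next I would verify the four norm axioms for $\|\vec{x}\|_{\vec{M}}=\|\vec{M}\vec{x}\|_2$ on $\mathcal{X}$. Nonnegativity, absolute homogeneity, and the triangle inequality are inherited from $\|\cdot\|_2$ by the linearity of $\vec{M}$. The only nontrivial axiom is definiteness: if $\|\vec{x}\|_{\vec{M}}=0$ for some $\vec{x}\in\mathcal{X}$, then by the observation above $\vec{y}=\vec{M}\vec{x}=\vec{0}$, hence $\vec{x}=\bec{A}\tran\vec{y}=\vec{0}$. Note this step crucially uses the restriction to $\mathcal{X}$; on all of $\R^{Nd}$ the kernel of $\vec{M}$ is nonzero (it contains $(\texttt{Im}(\bec{A}\tran))^{\perp}$), which is exactly why the footnote warns that $\|\cdot\|_{\vec{M}}$ is not a norm globally.

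For the two equivalence inequalities, the bound $\|\vec{x}\|_{\vec{M}}=\|\vec{M}\vec{x}\|_2\le\|\vec{M}\|_2\|\vec{x}\|_2=M_1\|\vec{x}\|_2$ is just the definition of the spectral operator norm and holds for every $\vec{x}\in\R^{Nd}$. For the reverse bound I would use the identity $\vec{x}=\bec{A}\tran\vec{M}\vec{x}$ that is valid for $\vec{x}\in\mathcal{X}$ (by the uniqueness argument of the first paragraph, with $\vec{y}=\vec{M}\vec{x}$), giving
\begin{equation*}
\|\vec{x}\|_2=\|\bec{A}\tran\vec{M}\vec{x}\|_2\le\|\bec{A}\tran\|_2\,\|\vec{M}\vec{x}\|_2=M_2\|\vec{x}\|_{\vec{M}}.
\end{equation*}

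I do not anticipate any real obstacle; the one spot that requires care is making the ``unique coordinates'' argument explicit so that both definiteness and the identity $\vec{x}=\bec{A}\tran\vec{M}\vec{x}$ on $\mathcal{X}$ are rigorously justified from the full row rank of $\bec{A}$. Once that is in place, everything else is a short computation with the spectral norm.
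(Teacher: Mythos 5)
Your proposal is correct and follows essentially the same route as the paper's proof: both reduce definiteness to the injectivity of $\bec{A}\tran$ on $\mathcal{X}=\texttt{Im}(\bec{W})=\texttt{Im}(\bec{A}\tran)$, and both obtain the second inequality from the identity $\vec{x}=\bec{A}\tran\vec{M}\vec{x}$ valid on $\mathcal{X}$. The only cosmetic difference is that the paper parametrizes $\vec{x}=\bec{W}\vec{z}$ and computes $\vec{M}\vec{x}=\bec{A}\vec{z}$ directly, whereas you phrase the same fact as $\vec{M}$ being a left inverse of $\bec{A}\tran$ yielding unique coordinates.
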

\begin{proof}
   To prove that $||\vec{x}||_{\vec{M}}$ is a norm on $\mathcal{X}$, we need to show that following holds for all $\vec{x},\vec{y}\in \mathcal{X}$~\cite[Definition 5.1.1]{horn2013matrix}:
     (1a) $||\vec{x}||_{\vec{M}}\geq0$,
     (1b) $||\vec{x}||_{\vec{M}}=0$ if and only if $\vec{x}=0$,
    (2) $||c\vec{x}||_{\vec{M}}=c ||\vec{x}||_{\vec{M}}$ for any positive scalar $c$, and
     (3) $||\vec{x}+\vec{y}||_{\vec{M}}\leq  ||\vec{x}||_{\vec{M}}+||\vec{y}||_{\vec{M}}$.

     Conditions (1a), (2), and (3) follow directly from the fact that $||\cdot||_2$ is a norm.
     To show that (1b) holds true note that if $\vec{x}\in \mathcal{X}=\texttt{Im}(\bec{W})$ then there exists $\vec{z}\in \R^{dN}$ such that $\vec{x}= \bec{W}\vec{z}$. 
    Moreover, by using that $\bec{W}=\vec{A}\tran \vec{A}$ and the definition of $\bec{A}$ we also have that $\vec{x}=\bec{A}\tran \bec{A} \vec{z}$ and $||\vec{x}||_{\vec{M}}= \bec{A} \vec{z}$.
    Therefore, if $\vec{x}\neq \vec{0}$ then $||\vec{x}||_{\vec{M}}= ||\bec{A} \vec{z}||_2\neq 0$, since $\bec{A} \vec{z}=\vec{0}$ implies that $\vec{x}=\vec{0}$, which proves that condition (1b) holds true.

    The inequality $ ||\vec{x}||_{\vec{M}} \leq M_1 ||\vec{x}||_2$ follows directly from the definition of the norm $||\cdot||_{\vec{M}}$.
    The inequality $||\vec{x}||_2 \leq M_2 ||\vec{x}||_{\vec{M}}$ can be obtained similarly by noting that $\vec{x}=\vec{A}\tran \vec{M} \vec{x}$.
\end{proof}

 To prove the contraction, we show that the algorithm is a standard dual decomposition algorithm after some change of variables.
 The dual function of the problem in Eq.~\eqref{eq:mainOptProb-dual} is
 $$ D(\vec{v})= \min_{\vec{c}}  F(\vec{c}) +\vec{v}\tran \vec{A}\vec{c} = \vec{c}(\vec{A}\tran \vec{v})$$
 where 
$\vec{c}(\vec{x}) =  \text{argmin}_{\vec{c}} ~F(\vec{c}) +\langle \vec{x}, \vec{c}\rangle. $
 The dual gradient is
 $\nabla D(\vec{v})= 
  \vec{A} \vec{c}(\vec{A}\tran\vec{v})$
 and the dual function can be maximized by the gradient method 
 \begin{align*}
    \vec{v}^{k+1}=&  \vec{v}^k+\gamma  \vec{A} \vec{c}^k, \\
     \vec{c}^{k+1}=& \underset{\vec{c}}{\text{argmin}} ~F(\vec{c}) +\langle  \vec{A}\tran \vec{v}^{k+1}, \vec{c}\rangle.
 \end{align*}
 By doing the change of variables $\vec{x}=\vec{A}\tran \vec{v}$ and multiplying the $\vec{v}$-update by $\vec{A}\tran$ the gradient method can be written on the following form
 \begin{align*}
    \vec{x}^{k+1}=& ~ \vec{x}^k+\gamma  \bec{W} \vec{c}^k, \\
     \vec{c}^{k+1}=&~ \underset{\vec{c}}{\text{argmin}} ~F(\vec{c}) +\langle\vec{x}^{k+1}, \vec{c}\rangle,
 \end{align*}
 which is the same as the algorithm in Eq.~(7) in Section 2.2.2.
 Note that if $\vec{v}^0\in\mathcal{V}:=\texttt{Im}(\vec{A})$ and $\vec{x}^{0}\in \mathcal{X}$ then all the iterates are in $\mathcal{V}$ and $\mathcal{X}$, respectively, i.e., $\vec{v}^{k}\in\mathcal{V} $ and $\vec{x}^{k}\in\mathcal{X} $ for all $k\in \N$.
 We can go from $\vec{x}=\vec{A}\tran \vec{v}\in \mathcal{X}$  to $\vec{v}\in \mathcal{V}$ by the transform
 $\vec{v}=\vec{M}\vec{x}=(\bec{A}\bec{A}\tran)^{-1}\bec{A} \vec{x}.$
 We now conclude the prove by showing that the function
 \begin{equation} \label{eq:G_trans}G:\mathcal{V}\rightarrow \mathcal{V}, ~~\vec{v}\mapsto \vec{v}+\gamma \vec{A} \vec{c}(\vec{A}\tran \vec{v})\end{equation}
 is a contraction in the 2-norm and that   $\vec{x} \mapsto A(C(\vec{x}),\vec{x})$ is a contraction on $\mathcal{X}$ in the norm $||\cdot||_{\vec{M}}$.
 \begin{lemma}
    Suppose that the step-size is chosen as
    $$\gamma\in\left(0,\frac{2L\mu}{\mu \lambda_{\min}^+(\vec{W})+L\lambda_{\max}(\vec{W})}\right].$$
    Then for all $\vec{v}\in \mathcal{V}$ and $\vec{x}\in \mathcal{X}$ we have
   \begin{align*}
        ||G(\vec{v})-\vec{v}^{\star}||_2 \leq&\sigma_{\gamma} ||\vec{v}-\vec{v}^{\star}||_2, \\
         ||A(C(\vec{x}),\vec{x})-\vec{x}^{\star}||_{\vec{M}} \leq&\sigma_{\gamma} ||\vec{x}-\vec{x}^{\star}||_{\vec{M}},
    \end{align*}
    where $G(\cdot)$ is as defined in Eq.~\eqref{eq:G_trans} and
    $$\sigma_{\gamma}:=   \sqrt{1-\frac{2\gamma \lambda_{\min}^+(\vec{W}) \lambda_{\max}(\vec{W})  }{ \mu\lambda_{\min}^+(\vec{W})+L\lambda_{\max}(\vec{W})} },$$
    and $\vec{v}^{\star}$ is a maximizer of the dual function $D(\cdot)$ and $\vec{x}^{\star}=\vec{A}\tran \vec{v}^{\star}$.
    In particular, if we choose 
    $\gamma= 2L\mu/(\mu\lambda_{\min}^+(\vec{W})+L\lambda_{\max}(\vec{W}))$
    then for any $\vec{v}\in \mathcal{V}$ we have
    \begin{align*}
         ||G(\vec{v})-\vec{v}^{\star}||_2 \leq& \sigma ||\vec{v}-\vec{v}^{\star}||_2, \\
        ||A(C(\vec{x}),\vec{x})-\vec{x}^{\star}||_{\vec{M}} \leq& \sigma ||\vec{x}-\vec{x}^{\star}||_{\vec{M}},
    \end{align*}
    where $\sigma=1-2/(\kappa+1)$ and $\kappa=\lambda_{\max}(\vec{W}) L/ (\mu \lambda_{\min}^+(\vec{W}))$. %
 \end{lemma}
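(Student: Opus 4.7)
My plan is to recognize the update $G$ as one step of gradient ascent on the dual function $D$, invoke the classical contraction bound for the gradient method on a strongly concave smooth function restricted to the subspace $\mathcal{V}$, and then transfer the resulting inequality from $\mathcal{V}$ to $\mathcal{X}$ via the isometry implicit in $\|\cdot\|_{\vec{M}}$.

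First, I would rewrite $D$ via Fenchel duality as $D(\vec{v}) = -F^{\ast}(-\vec{A}\tran\vec{v})$, so that $\nabla D(\vec{v}) = \vec{A}\vec{c}(\vec{A}\tran\vec{v})$ and the update becomes $G(\vec{v}) = \vec{v}+\gamma\nabla D(\vec{v})$. Since each $f_i$ is $\mu$-strongly convex and $L$-smooth, so is $F$, which gives that $\nabla F^{\ast}$ is $(1/\mu)$-Lipschitz and $F^{\ast}$ is $(1/L)$-strongly convex. Combined with $\vec{A}\tran\vec{A}=\bec{W}$, these yield two estimates. For smoothness,
\begin{equation*}
\|\nabla D(\vec{v}_1) - \nabla D(\vec{v}_2)\|_2 \leq \|\vec{A}\|_2 \cdot \tfrac{1}{\mu}\|\vec{A}\tran(\vec{v}_1-\vec{v}_2)\|_2 \leq \tfrac{\lambda_{\max}(\vec{W})}{\mu}\|\vec{v}_1-\vec{v}_2\|_2,
\end{equation*}
so $\nabla D$ is $L_D$-Lipschitz with $L_D = \lambda_{\max}(\vec{W})/\mu$. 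For strong concavity on $\mathcal{V}$,
\begin{equation*}
\langle -\nabla D(\vec{v}_1)+\nabla D(\vec{v}_2), \vec{v}_1-\vec{v}_2\rangle \geq \tfrac{1}{L}\|\vec{A}\tran(\vec{v}_1-\vec{v}_2)\|_2^2 \geq \tfrac{\lambda_{\min}^+(\vec{W})}{L}\|\vec{v}_1-\vec{v}_2\|_2^2
\end{equation*}
for $\vec{v}_1-\vec{v}_2\in\mathcal{V}$, where the final step uses that $\vec{A}\tran$ restricted to $\mathcal{V}=\texttt{Im}(\vec{A})$ has smallest singular value $\sqrt{\lambda_{\min}^+(\vec{W})}$ because the kernel of $\vec{A}\tran$ is orthogonal to $\mathcal{V}$. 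Thus $-D$ is $\mu_D$-strongly convex on $\mathcal{V}$ with $\mu_D=\lambda_{\min}^+(\vec{W})/L$.

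Next, I would apply the textbook co-coercivity bound for the gradient method on a $\mu_D$-strongly convex and $L_D$-smooth function: for any $\gamma\in(0, 2/(\mu_D+L_D)]$,
\begin{equation*}
\|G(\vec{v})-\vec{v}^{\star}\|_2^2 \leq \Bigl(1-\tfrac{2\gamma \mu_D L_D}{\mu_D + L_D}\Bigr)\|\vec{v}-\vec{v}^{\star}\|_2^2.
\end{equation*}
Substituting $\mu_D,L_D$ and clearing by $\mu L$, the admissible step-size range becomes $(0, 2L\mu/(\mu\lambda_{\min}^+(\vec{W})+L\lambda_{\max}(\vec{W}))]$ and the coefficient equals $\sigma_\gamma^2$ exactly. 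For the boundary choice $\gamma=2L\mu/(\mu\lambda_{\min}^+(\vec{W})+L\lambda_{\max}(\vec{W}))$ the coefficient collapses via $4\mu_D L_D/(\mu_D+L_D)^2 = ((L_D-\mu_D)/(L_D+\mu_D))^2$ to $(1-2/(\kappa+1))^2$ with $\kappa=L_D/\mu_D=L\lambda_{\max}(\vec{W})/(\mu\lambda_{\min}^+(\vec{W}))$, matching the stated $\sigma$.

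Finally, I would transfer the bound from $\mathcal{V}$ to $\mathcal{X}$. Multiplying $G(\vec{v})=\vec{v}+\gamma\vec{A}\vec{c}(\vec{A}\tran\vec{v})$ on the left by $\vec{A}\tran$ and setting $\vec{x}=\vec{A}\tran\vec{v}$ yields $\vec{A}\tran G(\vec{v}) = \vec{x}+\gamma\bec{W}\vec{c}(\vec{x}) = A(C(\vec{x}),\vec{x})$, so the two iterations are conjugate. The identity $\vec{M}\vec{A}\tran=I$ on $\mathcal{V}$, verified in the preceding norm lemma, gives $\|\vec{A}\tran\vec{v}\|_{\vec{M}} = \|\vec{M}\vec{A}\tran\vec{v}\|_2 = \|\vec{v}\|_2$, so the $\|\cdot\|_{\vec{M}}$-distance on $\mathcal{X}$ equals the Euclidean distance on $\mathcal{V}$ under this bijection. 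Writing $\vec{x}^{\star}=\vec{A}\tran\vec{v}^{\star}$ and applying these identifications to both sides of the $\vec{v}$-space contraction yields the claimed bound on $A(C(\vec{x}),\vec{x})$. The main obstacle I anticipate is cleanly handling that $-D$ is only strongly convex on the subspace $\mathcal{V}$, which requires the spectral arguments to carefully restrict $\vec{A}\tran$ to $\mathcal{V}$ and the gradient-method analysis to keep iterates inside $\mathcal{V}$; once that is in place, what remains is algebraic bookkeeping.
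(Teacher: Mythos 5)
Your proposal is correct and follows essentially the same route as the paper: identify $G$ as dual gradient ascent, establish that $-D$ is $\lambda_{\min}^+(\vec{W})/L$-strongly convex on $\mathcal{V}$ and $\lambda_{\max}(\vec{W})/\mu$-smooth via conjugate duality, invoke the standard contraction bound (Nesterov's Theorem 2.1.15) to get $\sigma_\gamma$, and transfer to $\mathcal{X}$ through the isometry between $\|\cdot\|_2$ on $\mathcal{V}$ and $\|\cdot\|_{\vec{M}}$ on $\mathcal{X}$. The constants and step-size range check out exactly as in the paper's argument.
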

 \begin{proof}
 The iterative algorithm defined by $G(\cdot)$ is a gradient ascent  for the concave dual function $D(\cdot)$, which is $\lambda_{\min}^+(\vec{W})/L$-strongly concave and $\lambda_{\max}(\vec{W})/\mu$-smooth (we show this in Lemma~\ref{lemma:D} below).  Therefore, by standard results in convex optimization, $G(\cdot)$ is contractive on $\mathcal{V}$ with the contractivity parameter $\sigma_{\gamma}$, see e.g.~\cite[Theorem 2.1.15]{nesterov2013introductory}.
 To prove that $A(C(\vec{x}),\vec{x})$ is a contraction on $\mathcal{X}$, take $\vec{x}\in \mathcal{X}$ and $\vec{M}\vec{x}\in \mathcal{V}$.  Then by setting $\vec{x}^+=A(C(\vec{x}),\vec{x})$ and $\vec{v}^+=G(\vec{v})$, we have
\begin{align*}
  ||\vec{x}^+{-}\vec{x}^{\star}||_{\vec{M}}= ||\vec{v}^+{-}\vec{v}^{\star}||_2\leq \sigma_{\gamma}  ||\vec{v}{-}\vec{v}^{\star}||_2 = \sigma ||\vec{x}{-}\vec{x}^{\star}||_{\vec{M}},
\end{align*}
which yields the results.
 \end{proof}

  \begin{lemma} \label{lemma:D}
     $D(\cdot)$ is a) $\lambda_{\min}^+(\vec{W})/L$-strongly concave on the set $\mathcal{V}$ and $\lambda_{\max}(\vec{W})/\mu$-smooth.
  \end{lemma}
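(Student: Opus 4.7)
The plan is to establish both properties by writing $D$ in terms of the convex conjugate of $F$ and then invoking the standard duality between smoothness and strong convexity. Recall that by definition
$$D(\vec{v}) = \min_{\vec{c}} F(\vec{c}) + \vec{v}\tran \vec{A} \vec{c} = -F^{*}(-\vec{A}\tran \vec{v}),$$
where $F^*$ is the convex conjugate of $F$ and $F(\vec{c})=\sum_i f_i(\vec{c}_i)$. Since each $f_i$ is $\mu$-strongly convex and $L$-smooth, the separable sum $F$ is $\mu$-strongly convex and $L$-smooth in the Euclidean sense. By the standard duality (see e.g.\ \cite{nesterov2013introductory}), this yields that $F^*$ is $1/\mu$-smooth and $1/L$-strongly convex. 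Also, $\nabla D(\vec{v})=\vec{A}\vec{c}(\vec{A}\tran\vec{v})$ can be identified with $\vec{A}\nabla F^{*}(-\vec{A}\tran\vec{v})$.

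For smoothness, I would apply the chain rule directly: for any $\vec{v}_1,\vec{v}_2 \in \R^{Nd}$,
$$\|\nabla D(\vec{v}_1)-\nabla D(\vec{v}_2)\|_2 \leq \|\vec{A}\|_2\,\|\nabla F^{*}(-\vec{A}\tran \vec{v}_1)-\nabla F^{*}(-\vec{A}\tran \vec{v}_2)\|_2 \leq \frac{\|\vec{A}\|_2^2}{\mu}\|\vec{v}_1-\vec{v}_2\|_2.$$
Since $\|\vec{A}\|_2^2=\lambda_{\max}(\vec{A}\tran\vec{A})=\lambda_{\max}(\bec{W})=\lambda_{\max}(\vec{W})$, this gives the required $\lambda_{\max}(\vec{W})/\mu$-smoothness on all of $\R^{Nd}$, hence on $\mathcal{V}$.

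For strong concavity on $\mathcal{V}$, I would translate the $1/L$-strong convexity inequality for $F^*$ at $\vec{u}_i=-\vec{A}\tran\vec{v}_i$ back into a statement about $D$, obtaining
$$D(\vec{v}_2) \leq D(\vec{v}_1) + \nabla D(\vec{v}_1)\tran(\vec{v}_2-\vec{v}_1) - \frac{1}{2L}\|\vec{A}\tran(\vec{v}_1-\vec{v}_2)\|_2^2.$$
The key step is then to lower-bound $\|\vec{A}\tran \vec{w}\|_2^2$ for $\vec{w}=\vec{v}_1-\vec{v}_2\in\mathcal{V}=\texttt{Im}(\vec{A})$. Since the non-zero eigenvalues of $\vec{A}\vec{A}\tran$ coincide with those of $\vec{A}\tran\vec{A}=\bec{W}$, and since $\vec{w}\in\texttt{Im}(\vec{A})$ lies in the span of eigenvectors of $\vec{A}\vec{A}\tran$ with non-zero eigenvalues, one has $\|\vec{A}\tran \vec{w}\|_2^2 = \vec{w}\tran\vec{A}\vec{A}\tran\vec{w}\geq \lambda_{\min}^+(\bec{W})\|\vec{w}\|_2^2=\lambda_{\min}^+(\vec{W})\|\vec{w}\|_2^2$. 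Substituting this in produces the desired $\lambda_{\min}^+(\vec{W})/L$-strong concavity on $\mathcal{V}$.

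The only subtle point — and the main obstacle — is the restriction to $\mathcal{V}$ in the strong-concavity argument: globally $D$ is only concave (not strictly concave) because $\vec{A}$ has a non-trivial kernel and $\vec{A}\tran \vec{w}$ can vanish for $\vec{w}\notin\mathcal{V}$. Restricting to $\mathcal{V}=\texttt{Im}(\vec{A})$ rules out such directions and lets $\lambda_{\min}^+(\vec{W})$ play the role of the smallest effective eigenvalue, making the argument go through. All other steps are routine manipulations of conjugate-duality inequalities and the spectral identity $\lambda_{\max/\min}^+(\vec{A}\vec{A}\tran)=\lambda_{\max/\min}^+(\vec{A}\tran\vec{A})=\lambda_{\max/\min}^+(\vec{W})$.
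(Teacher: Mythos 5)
Your proposal is correct and follows essentially the same route as the paper's proof: both express $D(\vec{v})=-F^*(-\vec{A}\tran\vec{v})$, use the conjugate duality between $L$-smoothness/$\mu$-strong convexity of $F$ and $1/L$-strong convexity/$1/\mu$-smoothness of $F^*$, and obtain the constants from $\|\vec{A}\|_2^2=\lambda_{\max}(\vec{W})$ and from $\vec{w}\tran\vec{A}\vec{A}\tran\vec{w}\geq\lambda_{\min}^+(\vec{W})\|\vec{w}\|_2^2$ on $\mathcal{V}$. The only cosmetic difference is that you phrase strong concavity via the function-value inequality while the paper uses the equivalent gradient-monotonicity form.
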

\begin{proof} 
  We can write
  $D(\vec{v})=-F^*(-\vec{A}\tran \vec{v})$ and $\nabla D(\vec{v})=  \vec{A} \nabla F^*(-\vec{A}\tran \vec{v})$ 
  where $F^*(\cdot)$ is the  convex conjugate of $F(\cdot)$.
  To prove the strongly concavity we use that $F^*(\cdot)$ is $1/L$-strongly convex since $F(\cdot)$ is $L$-smooth, see~\cite[Proposition 12.60]{rockafellar2009variational}. Take
  $\vec{v}_1,\vec{v}_2\in  \mathcal{V}$, then
  \begin{align*}
     \langle \nabla D(\vec{v}_2)-&\nabla D(\vec{v}_1),\vec{v}_1-\vec{v}_2 \rangle \\
      =& \langle F^*(\vec{A}\tran \vec{v}_1)-F^*(\vec{A}\tran\vec{y}_2),\vec{v}_1-\vec{v}_2 \rangle \\
   =& \langle F^*(\vec{A}\tran \vec{v}_1)-F^*(\vec{A}\tran\vec{v}_2),\vec{A}\tran\vec{v}_1-\vec{A}\tran\vec{v}_2) \rangle    \\
   =& \frac{1}{L} ||\vec{A}\tran (\vec{v}_1-\vec{v}_2)||^2
   \geq \frac{\lambda_{\min}^+(\vec{W})}{L} ||\vec{v}_1-\vec{v}_2||^2
  \end{align*}
  where we have used that   $\vec{y}_1,\vec{y}_2\in  \mathcal{V}$ and $\lambda_{\min}^+(\vec{A} \vec{A}\tran)=\lambda_{\min}^+(\vec{W})$ to get the last inequality.

  To prove the smoothness of $D(\cdot)$  we use that $F^*(\cdot)$ is $1/\mu$-strongly convex.
   Take some $\vec{v}_1,\vec{v}_2$,   then
  \begin{align*}
    ||\nabla D(\vec{v}_1)-\nabla D(\vec{v}_2)||_2 \leq& ||\vec{A}|| ||\nabla F^*(\vec{A}\tran \vec{v}_1)-\nabla F^*(\vec{A}\tran \vec{v}_2)||_2\\
            \leq& \frac{ ||\vec{A}||  ||\vec{A}\tran||}{\mu}   ||\vec{v}_1-\vec{v}_2||_2  \\
             \leq& \frac{ \lambda_{\max}(\vec{W})}{\mu}   ||\vec{v}_1-\vec{v}_2||_2,
  \end{align*}
  which concludes the proof.
\end{proof}

\bibliographystyle{IEEEtran}
\bibliography{refs}{}

\end{document}